\newcommand{\eqrefb}[1]{(\ref{#1})}
\newcommand{\refb}[1]{\ref{#1}}
\newcites{suppl}{References}
\numberwithin{equation}{section}
\newtheoremstyle{dotless}{}{}{\itshape}{}{\bfseries}{}{ }{}
\theoremstyle{dotless}
\newtheorem{theorem}{Theorem}[section]
\newtheorem{corollary}[theorem]{Corollary}
\newtheorem{assump}[theorem]{Assumption}
\newtheorem{lemma}[theorem]{Lemma}
\newtheoremstyle{definition}{}{}{}{}{\bfseries}{}{ }{}
\theoremstyle{definition}
\newtheorem{remark}[theorem]{Remark}
\newcommand{\Pb}{\mathbb{P}}
\newcommand{\E}{\mathbb{E}}
\newcommand{\R}{\mathbb{R}}
\newcommand{\Z}{\mathbb{Z}}
\newcommand{\N}{\mathbb{N}}
\newcommand{\IF}{\mathcal{IF}}
\newcommand{\floor}[1]{\lfloor #1 \rfloor}
\newcommand{\convd}{\overset{\mathcal{D}}{\Longrightarrow}}
\newcommand{\eqd}{\overset{\mathcal{D}}{=}}
\newcommand{\convp}{\overset{\mathbb{P}}{\Longrightarrow}}
\newcommand{\hatSigma}{\hat{\Sigma}_m}
\newcommand{\hatGamma}{\hat{\Gamma}}
\newcommand{\hattheta}{\hat{\theta}}
\newcommand{\hatbeta}{\hat{\beta}}
\newcommand{\hatD}{\hat{D}}
\newcommand{\hatE}{\hat{E}}
\newcommand{\hatP}{\hat{P}}
\newcommand{\hatQ}{\hat{Q}}
\newcommand{\op}{o_{\mathbb{P}}}
\newcommand{\Op}{\mathcal{O}_{\mathbb{P}}}
\newcommand{\hatFij}{\hat{F}_i^j}
\newcommand{\supkinf}{\sup_{k=1}^\infty}
\newcommand{\maxijk}{\max_{\substack{i,j=1\\i<j}}^k}
\newcommand{\WmSig}{W_{m,1}^{\Sigma}}
\newcommand{\WmSigII}{W_{m,2}^{\Sigma}}
\newcommand{\convm}{\underset{m \to \infty}{\Longrightarrow}}
\newcommand{\tildew}{\tilde{w}}
\DeclareMathOperator{\Cov}{Cov}
\newcommand{\revTwo}[1]{{#1}}
\begin{document}
\onehalfspacing
\title{{A new approach for open-end sequential change point monitoring}}

{
\author{
  {\small Josua G\"osmann}\\
  {\small Ruhr-Universit\"at Bochum }\\
  {\small Fakult\"at f\"ur Mathematik}\\
  {\small 44780 Bochum, Germany} \\
  {\small \href{mailto:josua.goesmann@ruhr-uni-bochum.de}{josua.goesmann@ruhr-uni-bochum.de}}\\
  {\small (corresponding author)}
  \and
  {\small Tobias Kley}\\
  {\small University of Bristol}\\
  {\small School of Mathematics}\\
  {\small Bristol BS8 1UG, United Kingdom}\\
  {\small \href{mailto:tobias.kley@bristol.ac.uk}{tobias.kley@bristol.ac.uk}}
\and
  {\small Holger Dette}\\
  {\small Ruhr-Universit\"at Bochum }\\
  {\small Fakult\"at f\"ur Mathematik}\\
  {\small 44780 Bochum, Germany} \\
  {\small \href{mailto:holger.dette@ruhr-uni-bochum.de}{holger.dette@ruhr-uni-bochum.de}}
}

\maketitle

\begin{abstract}
We propose a new sequential monitoring scheme for changes in the parameters of a multivariate time series.
In contrast to procedures proposed in the literature which compare an estimator from the training sample with an estimator calculated from the remaining data, we suggest to divide the sample at each time point after the training sample.
Estimators from the sample before and after all separation points are then continuously compared calculating a maximum of norms of their differences.
For open-end scenarios our approach yields an asymptotic level $\alpha$ procedure, which is consistent under the alternative of a change in the parameter.
By means of a simulation study it is demonstrated that the new method outperforms the commonly used procedures with respect to power and the feasibility of our approach is illustrated by analyzing two data examples.
\end{abstract}

MSC classification: 62L99, 62F03

JEL classification: C01,C22\\

Keywords and phrases: change point analysis, open-end procedures, sequential monitoring

\section{Introduction}
\label{sec1}

Nowadays, nearly all fields of applications require sophisticated statistical modeling and statistical inference to draw scientific conclusions from the observed data.
In many cases data is time dependent and the involved model parameters or the model itself may not be necessarily stable. In such situations it is of particular importance to detect changes in the processed data as soon as possible and to adapt the statistical analysis accordingly.
These changes are usually called {\it change points} or {\it structural breaks}.
Due to its universality, methods for change point analysis have a vast field of possible applications - ranging from natural sciences, like biology and meteorology, to humanities, like economics, finance, and social sciences.
Since the seminal papers of \cite{Page1954, Page1955} the problem of detecting change points in time series has received substantial attention in the statistical literature.
The contributions to this field can be roughly divided into the areas of {\it retrospective} and {\it sequential} change point analysis.

In the retrospective case, historical data sets are examined with the aim to test for changes and identify their position within the data.
In this setup, the data is assumed to be completely available before the statistical analysis is started (a-posteriori analysis). 
A comprehensive overview of retrospective change point analysis can be found in \cite{Aue2013}.
In many practical applications, however, data arrives consecutively and breaks can occur at any new data point.
In such cases the statistical analysis for changes in the processed data has to start immediately with the target to detect changes as soon as possible.
This field of statistics is called \textit{sequential change point detection} or \textit{online change point detection}.

In the major part of the 20th century the problem of sequential change point detection was tackled using procedures, which are optimized to have a minimal detection delay but do usually not control the probability of a false alarm (type I error).
These methods are called control charts and a comprehensive review can be found in \cite{Lai1995,Lai2001}.
A new paradigm was then introduced by \cite{Chu1996}, who use initial data sets and therefrom employ invariance principles to also control the type I error.
The methods developed under this paradigm [see below] can again be subdivided into {\it closed-end} and {\it open-end}  approaches.
In closed-end scenarios monitoring is stopped at a fixed pre-defined point of time, while in open-end scenarios monitoring can - in principle - continue forever if no change point is detected.

In the paper at hand we develop a new approach for sequential change point detection in an open-end scenario.
To be more precise let $\{X_t\}_{t \in \Z}$ denote a $d$-dimensional time series and let $F_t$ be the distribution function of the random variable $X_t$ at time $t$.
We are studying monitoring procedures for detecting changes of a parameter $\theta_t = \theta(F_t)$, where $\theta= \theta(F)$ is a $p$-dimensional parameter of a distribution function $F$ on $\R^{d}$ (such as the mean, variance, correlation, etc.).
In particular we will develop a decision rule for the hypothesis of a constant parameter, that is
\begin{align}\label{globalhypo0}
H_0&:\; \theta_1 = \dots = \theta_m = \theta_{m+1} =\theta_{m+2} = \ldots ~,
\end{align}
against the alternative that the parameter changes (once) at some time $ m+k^{\star}$ with $k^\star \geq 1$, that is
\begin{align}\label{globalhypo1}
H_1&:\; \exists k^{\star} \in \N:\;\;
\theta_1 = \dots =\theta_{m+k^{\star}-1} \neq \theta_{m+k^{\star}} = \theta_{m+k^{\star}+1} = \ldots .
\end{align}
In this setup, which was originally introduced by \cite{Chu1996}, the first $m$ observations are assumed to be stable and will serve as an initial training set.
The problem of sequential change point detection in the hypotheses paradigm as pictured above has received substantial interest.
Since the seminal paper of \cite{Chu1996} several authors have worked in this area.
\cite{Berkes2004} designed a detector for changes in the coefficient in the parameters of a GARCH-process.
\cite{Horvath2004}, \cite{Aue2006}, \cite{Aue2009}, \cite{Fremdt2015} and \cite{Aue2014} developed methodology for detecting changes in the coefficients of a linear model, while \cite{Wied2013} and \cite{Pape2016} considered sequential monitoring schemes for changes in special functionals such as the correlation or variance.
A MOSUM-approach was employed by \cite{Leisch2000}, \cite{Horvath2008} or \cite{ChenTian2010} to monitor the mean and linear models, respectively.
Recently, \cite{Hoga2017} used an $\ell_{1}$-norm to detect changes in the mean and variance of a multivariate time series, \cite{Kirch2018} defined a unifying framework for detecting changes in different parameters with the help of several statistics and \cite{Otto2020} considered a Backward CUSUM, which monitors changes based on recursive residuals in a linear model.
A helpful but not exhaustive overview of different sequential procedures can be found in Section 1, in particular Table 1, of \cite{Anatolyev2018}.

A common feature of all procedures in the cited literature consists in the comparison of estimators from different subsamples of the data.
To be precise, let $X_{1},\ldots, X_{m}$ denote an initial training sample and $X_{1}, \ldots , X_{m}, \ldots , X_{m+k}$
be the available data at time $m+k$.
Several authors propose to investigate the differences
\begin{align}
&\hattheta_{1}^{m} - \hattheta_{m+1}^{m+k}~,\label{def:ordCusum}
\end{align}
(in dependence of $k$), where $\hattheta_{i}^{j} $ denotes the estimator of the parameter from the sample $X_{i}, \ldots  , X_{j}$.
In the sequential change point literature monitoring schemes based on the differences \eqref{def:ordCusum} are 
usually called (ordinary) CUSUM procedures and have been considered by \cite{Horvath2004}, \cite{Aue2006, Aue2009, Aue2014}, \cite{Schmitz2010} or \cite{Hoga2017}.
Other authors suggest using a function of the differences
\begin{align}\label{def:pageCusum}
& \big\{ \hattheta_{1}^{m} - \hattheta_{m+j+1}^{m+k} \big\}_{j=0,\ldots,k-1}~
\end{align}
(in dependence of $k$) and the corresponding procedures are usually called Page-CUSUM tests [see \cite{Fremdt2015}, \cite{Aue2015}, or \cite{Kirch2018} among others].
As an alternative we propose - following ideas of \cite{Dette2019} - a monitoring scheme based on a function of the differences
\begin{align}\label{def:retroCUSUM}
\big\{ \hattheta_{1}^{m+j} - \hattheta_{m+j+1}^{m+k} \big\}_{j=0,\ldots,k-1} \;.
\end{align}
A possible advantage of \eqref{def:retroCUSUM} over \eqref{def:ordCusum} is the screening for all potential positions of the change point, which takes into account that the change point not necessarily comes with observation $X_{m+1}$ and so $\hattheta_{m+1}^{m+k}$ maybe `corrupted' by pre-change observations.
This issue is also partially addressed by \eqref{def:pageCusum}, where different positions are examined and compared with the estimator of the parameter from the training sample.  
We will demonstrate in Section \ref{sec4} that sequential monitoring schemes based on the differences \eqref{def:retroCUSUM} yield a substantial improvement in power compared to the commonly used methods based on \eqref{def:ordCusum} and \eqref{def:pageCusum}.
To avoid misunderstandings, the reader should note that a (total) comparison based on differences of the form \eqref{def:retroCUSUM}, is typically also called a CUSUM-approach in the retrospective change point analysis, see \cite{Aue2013} for a comprehensive overview of (retrospective) change point analysis.

The present paper is devoted to a rigorous statistical analysis of a sequential monitoring based on the differences defined in \eqref{def:retroCUSUM} in the context of an open-end scenario.
In Section~\ref{sec2} we introduce the new procedure and develop a corresponding asymptotic theory to obtain critical values such that monitoring can be performed at a controlled type I error.
The theory is broadly applicable to detect changes in a general parameter $\theta$ of a multivariate time series.
As all monitoring schemes in this context the method depends on a weight function and we also discuss the choice of this function.
In particular we establish an interesting result regarding this choice and establish a connection to corresponding ideas made by \cite{Horvath2004} and \cite{Fremdt2015}, which may also be of interest in closed-end scenarios.

In Section \ref{sec3} we discuss several special cases and demonstrate that the new methodology is applicable to detect changes in the mean and the parameters of a linear model.
We present a small simulation study in Section \ref{sec4}, where we compare our approach to those developed by \cite{Horvath2004} and \cite{Fremdt2015}.
In particular we demonstrate that the monitoring scheme based on the differences \eqref{def:retroCUSUM} yields a test with a controlled type I error and a smaller type II error than the procedures in the cited references.
In Section~\ref{sec:RealData} we illustrate our approach and compare it to other monitoring schemes by applying it to two examples where the parameter of a linear model of financial data is monitored around the time of the United Kingdom European Union membership referendum 2016.
Finally, all proofs are deferred to the online appendix [see \cite{Goesmann2020appendix}], in which we additionally provide some extra simulation results and briefly discuss how our statistic can be used in closed-end scenarios.

\section{Asymptotic properties}
\label{sec2}
Throughout this paper let $F$ denote a $d$-dimensional distribution function and $\theta =\theta(F)$ a $p$-dimensional
parameter of $F$.
We will denote by 
\begin{align}\label{eq:edf}
\hatFij(z) = \dfrac{1}{j-i+1}\sum_{t=i}^j I\{X_t \leq z \}~
\end{align}
the empirical distribution function of observations $X_i,\dots,X_j$ (here the inequality is understood component-wise) and consider the canonical estimator $\hattheta_i^j = \theta(\hatFij)$ of the parameter $\theta$ from the sample $X_{i}, \ldots ,X_{j}$.
 
To test the hypotheses \eqref{globalhypo0} and \eqref{globalhypo1} in the described online setting in a open-end scenario we propose a monitoring scheme defined by
\begin{align}\label{eq:hatE}
\hatE_m(k) = m^{-1/2}\max_{j=0}^{k-1} (k - j) \Big\Vert \hattheta_{1}^{m+j} - \hattheta_{m+j+1}^{m+k} \Big\Vert_{\hatSigma^{-1}}~,
\end{align}
where the statistic $\hatSigma$ denotes an estimator of the long-run variance matrix $\Sigma$ (defined in Assumption~\ref{assump:approx}) and the symbol $\Vert v \Vert_A^2 = v^\top A v$ denotes a weighted norm of the vector $v$ induced by the positive definite matrix $A$.
The monitoring is then performed as follows.
With observation $X_{m+k}$ arriving, one computes $\hatE_m(k)$ and compares it to an appropriate weight function, which is sometimes also called \textit{threshold function}, say $w$.
If
\begin{align}\label{ineq:reject}
w(k/m)\hatE_m(k) > c(\alpha)
\end{align}
occurs, monitoring is stopped and the null hypothesis \eqref{globalhypo0} is rejected in favor of the alternative \eqref{globalhypo1}.
If the inequality \eqref{ineq:reject} does not hold, monitoring is continued with the next observation $X_{m+k+1}$.
We will derive the limiting distribution of $\sup_{k=1}^{\infty} \hatE_m(k)w(k/m)$ in Theorem \ref{thm:mainH0} below to determine the constant $c(\alpha)$ involved in \eqref{ineq:reject}, such that the test keeps a nominal level of $\alpha$ (asymptotically as $m \to \infty$).

\begin{remark}\label{rem:JASA-paper}
The statistic \eqref{eq:hatE} is related to a detection scheme, which was recently proposed by \cite{Dette2019} for the closed-end case, where monitoring ends with observation $mT$, for some $T \in \mathbb{N}$.
These authors considered the statistic
\begin{align}\label{eq:hatD}
\begin{split}
\hatD_m(k)
&= m^{-3/2}\max_{j=0}^{k-1} (m + j)(k - j) \Vert \hattheta_{1}^{m+j} - \hattheta_{m+j+1}^{m+k} \Vert_{\hatSigma^{-1}}~,
\end{split}
\end{align}
and showed
\begin{align}\label{h6} 
\max_{k=1}^{mT} w(k/m)\hatD_m(k) \convd \max_{t \in [0,T]}w{(t)}\max_{s \in [0,t]} |(s+1)W(t+1)- (t+1)W(s+1)|~,
\end{align}
where $W$ denotes a $p$-dimensional Brownian motion and throughout this paper 
the symbol $ \convd $ denotes weak convergence (in the space under consideration).
[To avoid confusion, note that in the reference \cite{Dette2019} the weight function was defined as $w = \dfrac{1}{\overline{w}}$ for some appropriate function $\overline{w}$].
However, this statistic cannot be considered in an open-end scenario for the typical weight functions considered in the literature satisfying  $\limsup_{t \to \infty} tw(t) < \infty$ (in this case the limit on the right-hand side
of \eqref{h6}  would be almost surely infinite for $T=\infty$).
As weight functions satisfying $\limsup_{t \to \infty} t^2w(t) < \infty$ will cause a loss in power as indicated in an unpublished simulation study, we propose to replace the factor $(m+j)$ in \eqref{eq:hatD} by the size of the initial sample $m$, which leads to the monitoring scheme defined by \eqref{eq:hatE}.
The remaining weight factor $(k-j)$ is retained as it allocates smaller weights to the case when the post-change estimator $\hattheta_{m+j+1}^{m+k}$ contains greater uncertainty as $j$ is close to $k$.
\end{remark}

\begin{remark}
An essential disadvantage of closed-end scenarios as considered in \cite{Dette2019} is the problem of choosing the end-point of monitoring before the procedure is launched.
This problem drops out when open-end scenarios are employed, where monitoring can (theoretically) proceed forever if no change has been detected.
Even if the statistical problems of closed- and open-end scenarios are naturally related, the reader should note, that the mathematical/technical access to both problem is completely different.
In the closed-end case it is usually sufficient to assume the existence of functional central limit theorems (FCLTs) as the underlying time frame is compact [see for instance \citep{Aue2012}, \cite{Wied2013}, \cite{Pape2016}, \cite{Dette2019}].
To the authors best knowledge, an FCLT is insufficient in the open-end case and one commonly assumes stronger, uniform stochastic approximations or combines an FCLT with H\'{a}y\'{e}k-R\'{e}yni type inequalities
[see also Section \ref{sec2}, \cite{Horvath2004}, \cite{Aue2009}, \cite{Aue2009b}, \cite{Fremdt2014}, \cite{Fremdt2015}, \cite{Kirch2018}].
\end{remark}

To discuss the asymptotic properties of our approach, we require the following notation.
We denote the non-negative reals by $\R_{\geq 0}$ and define $\R_{+} := \R_{\geq 0} \setminus \{0\}$.
The symbol $\convp$ denotes convergence in probability.
The process $\{W(s)\}_{s \in [0,\infty)}$ will represent a standard $p$-dimensional Brownian motion with independent components.
For a vector $v \in \R^d $, we denote by $|v| = \big ({\sum_{i=1}^d v_{i}^2} \big) ^{1/2}$ its Euclidean norm.
By $\floor{x}$ for $x \in \R$ we denote the largest integer smaller or equal to $x$.
For the sake of a clear distinction we will employ
$\sup\limits_{i=1}^n a(i)$
for discrete indexing (with integer arguments) and
$\sup\limits_{0 \leq x \leq 1} a(x)$
for continuous indexing (with arguments taken from the interval $[0,1]$ or another subset of $\R$).\\
Next, we define the influence function (assuming its existence) by
\begin{align}\label{def:inflfunc}
\IF(x,F,\theta)
= \lim_{\varepsilon \searrow 0} \dfrac{\theta((1-\varepsilon)F + \varepsilon\delta_x) - \theta(F)}{\varepsilon}~,
\end{align}
where $\delta_x(z) = I\{x \leq z\}$ is the distribution function of the Dirac measure at the point $x \in \R^d$ and the inequality in the indicator is again understood component-wise.
We will focus on functionals that allow for an asymptotic linearization in terms of the influence function, that is
\begin{align}\label{eq:meta-remainder}
\hattheta_i^j - \theta = \theta( \hat F_i^j ) -
\theta(F) = \dfrac{1}{j-i+1} \sum_{t=i}^j \IF(X_t, F, \theta) + R_{i,j}
\end{align}
with asymptotically negligible remainder terms $R_{i,j}$.
Finally, for the sake of readability we introduce the following abbreviation
\begin{align*}
\IF_t = \IF(X_t, F_t, \theta)~,
\end{align*}
where $F_t$ is again the distribution function of $X_t$.
Under the null hypothesis~\eqref{globalhypo0} we will impose the following assumptions on the underlying time series.

\begin{assump}[Approximation]\label{assump:approx}
The time series $\{X_t\}_{t \in \Z}$ is (strictly) stationary, such that $F_t = F$ for all $t \in \Z$.
Further, for each $m \in \N$ there exist two independent,\\ $p$-dimensional standard Brownian motions $W_{m,1}$ and $W_{m,2}$, such that for some positive constant $\xi < 1/2$ the following approximations hold
\begin{align}\label{eq:assumpapprox1}
\supkinf \dfrac{1}{k^\xi}
\bigg|\sum_{t=m+1}^{m+k} \IF_t -\sqrt\Sigma W_{m,1}(k)\bigg|= \Op(1)
\end{align}
and
\begin{align}
\dfrac{1}{m^\xi}
\bigg|\sum_{t=1}^{m} \IF_t -\sqrt\Sigma W_{m,2}(m)\bigg|= \Op(1)
\end{align}
as $m \to \infty$, where $\Sigma= \sum_{t \in \mathbb{Z}}
\Cov\big(\IF_0,~\IF_t\big) \in \R^{p\times p}$ denotes the long-run variance matrix of the process $\big\{\IF_t\big\}_{t \in \mathbb{Z}}$, which we assume to exist and to be non-singular.
\end{assump}

\begin{assump}[Weight function]\label{assump:weighting}
The weight function $w: \R_{\geq 0} \to \R_{\geq 0}$ is of the form
\begin{align}\label{def:cutoffsweighting}
w(t) = \tilde{w}(t)I\{ t_w \leq t \leq T_w\}
\end{align}
for $t_w \geq 0$ and $T_w \in \R_{+} \cup \{\infty\}$.
Further $\tilde{w}:\R_{\geq 0} \to \R_+$ is a positive continuous function and in case of $T_w=\infty$ it satisfies additionally
\begin{enumerate}[(1)]
\item $\limsup_{t \to \infty} t \tilde{w}(t) < \infty~,$
\item $1/\tilde{w}$ is uniformly continuous on $\R_{\geq 0}$~.
\end{enumerate}
\end{assump}

\begin{assump}[Linearization]\label{assump:remainder}
The remainder terms in the linearization \eqref{eq:meta-remainder} satisfy
\begin{align}\label{eq:remainderZero}
\maxijk \dfrac{(j-i+1)}{\sqrt{k}} | R_{i,j} | 
= o(1)
\end{align}
as $k \to \infty$ with probability one.
\end{assump}

\begin{remark}\label{rem:assump}
Let us give a brief explanation on the assumptions stated above.
\begin{enumerate}[(i)]
\item Assumption \ref{assump:approx} is a uniform invariance principle and frequently used in the (sequential) change point literature [see for example \cite{Aue2006} or \cite{Fremdt2015} among others].
Following the lines of \cite{Aue2006} Assumption \ref{assump:approx} can be verified by employing the multivariate strong approximation results derived by \cite{Eberlein1986}.
This is already spelled out for augmented GARCH-processes in Lemma A.1 of \cite{Aue2006} for the one-dimensional case.
Assumption \ref{assump:approx} is stronger than a functional central limit theorem (FCLT), which is usually sufficient to work with in a closed-end setup [see for example \cite{Wied2013}, \cite{Pape2016} or \cite{Dette2019}].
Another possible starting point to cope with open-end scenarios is an FCLT for any fixed time horizon together combined with H\'{a}y\'{e}k-R\'{e}yni-Inequalities [see for example \cite{Kirch2018} or \cite{Kirch2019}].
As this is less frequently used in the literature, we will remain with the other approach.
\item Assumption \ref{assump:weighting} gives restrictions on the feasible set of weight functions, which are required for the existence of a (weak) limit derived in Theorem \ref{thm:mainH0}.
The cutoffs defined in \eqref{def:cutoffsweighting} serve only for technical purposes.
By choosing $t_w>0$ a delay at monitoring start is introduced, which can avoid problems with false alarms due to instability [see also \cite{Kirch2018}].
Selecting $T_w<\infty$ allows to additionally cover closed-end scenarios by our theory, which we briefly discuss in Section \refb{sec:ClosedEnd} of the online appendix [see \cite{Goesmann2020appendix}].
Note that in case of $t_w=0$ and $T_w=\infty$ the cutoffs disappear, such that $w$ and $\tilde{w}$ coincide.
\item 
It is worth mentioning that it is also possible to define the functions $w,\tildew$ on the smaller domain $\R_{+}$, while
additionally demanding that $ \lim_{t \to 0} t^\gamma \tildew(t) = 0 $ for a constant $0\leq \gamma < 1/2$.
In this case, the assumption for the remainders in \eqref{eq:remainderZero} has to be replaced by
\begin{align*}
\max_{\substack{i,j=1\\ i < j}}^{k} \dfrac{(j-i+1)}{k^{1/2-\gamma}} | R_{i,j} |~
= o(1)\;\;a.s.~,
\end{align*}
which would have the upside to allow for an unbounded weighting at zero.
However, for the sake of a transparent presentation, we use Assumption \ref{assump:weighting} here, as this also simplifies the technical arguments in the proofs later on.
\item Assumption \ref{assump:remainder} is crucial for the proof of our main theorem and directly implies
\begin{align*}
\sup_{k=1}^\infty \max_{\substack{i,j=1\\ i < j}}^{m+k} \dfrac{(j-i+1)}{(m+k)^{1/2}} | R_{i,j} |
= \sup_{k=m+1}^\infty \maxijk \dfrac{(j-i+1)}{k^{1/2}} | R_{i,j} | 
= o(1)\;\;a.s.\;\text{as } m\to \infty~.
\end{align*}
Note that in the location model $\theta(F) = \E_F[X]$ we have $R_{i,j}=0$ and \eqref{eq:remainderZero} obviously holds.
In general however, Assumption \ref{assump:remainder} is highly non-trivial and crucially depends on the structure of the functional $\theta$ and the time series $\{X_t\}_{t \in \Z}$.
For a comprehensive discussion the reader is referred to \cite{Dette2019}, where the estimate \eqref{eq:remainderZero} has been verified in probability for different functionals including quantiles and variance.
\end{enumerate}
\end{remark}
\noindent The following result is the main theorem of this section.

\begin{theorem}\label{thm:mainH0}
Assume that the null hypothesis \eqref{globalhypo0} and Assumptions \ref{assump:approx} - \ref{assump:remainder} hold.
If further $\hatSigma$ is a consistent and non-singular estimator of the long-run variance matrix $\Sigma$, it holds that
\begin{align}\label{eq:ThmMainH01}
\begin{split}
\supkinf w(k/m) \hatE_m(k)
&\convd
\sup_{0 \leq t < \infty} \max_{0 \leq s \leq t} (t+1)w(t) \Big| W\Big(\dfrac{s}{s+1}\Big) - W\Big(\dfrac{t}{t+1}\Big) \Big|~,
\end{split}
\end{align}
as $m \rightarrow \infty$, where $W$ is a $p$-dimensional Brownian motion with independent components and $|\cdot|$ denotes the Euclidean norm.
\end{theorem}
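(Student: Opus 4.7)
The plan is to approximate $\hatE_m(k)$ by a functional of Brownian motions via Assumptions \ref{assump:approx}--\ref{assump:remainder}, and then rewrite that functional through Brownian self-similarity, time inversion and time reversal so that the open-end supremum over $t\in[0,\infty)$ becomes a supremum on the compact set $[0,1)$ of an expression involving a single standard Brownian motion. The main technical obstacle will be the passage from continuous-mapping arguments at a finite horizon to the genuinely open-end limit; this is precisely where the uniform strong approximation in Assumption \ref{assump:approx}, as opposed to a mere FCLT, is used in an essential way.

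Under $H_0$, Assumption \ref{assump:remainder} (in the uniform form noted in Remark \ref{rem:assump}(iv)) yields, uniformly in $0 \leq j < k$,
\begin{align*}
(k-j)\big[\hattheta_1^{m+j} - \hattheta_{m+j+1}^{m+k}\big] = \frac{k-j}{m+j}\sum_{t=1}^{m+j}\IF_t - \sum_{t=m+j+1}^{m+k}\IF_t + \op(\sqrt m).
\end{align*}
Inserting the strong approximations from Assumption \ref{assump:approx} introduces errors of order $k^\xi + m^\xi$ which, after multiplication by $(k-j)/((m+j)\sqrt m)$ and by $w(k/m) = O(m/k)$ (Assumption \ref{assump:weighting}(1)), are negligible on $1 \leq k \leq mT$ for any fixed $T$ since $\xi < 1/2$. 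Consistency of $\hatSigma$ together with the identity $\|\sqrt\Sigma\,v\|_{\Sigma^{-1}}=|v|$ and a Slutsky argument replaces $\|\cdot\|_{\hatSigma^{-1}}$ by the Euclidean norm on the resulting Brownian piece.

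Define standard Brownian motions $B_m(u):=m^{-1/2}W_{m,1}(mu)$ and $\tilde B_m(u):=m^{-1/2}W_{m,2}(mu)$ and, using their independence, concatenate them into a single standard Brownian motion $\hat B_m$ on $[0,\infty)$ via $\hat B_m(u) = \tilde B_m(u)$ on $[0,1]$ and $\hat B_m(1+u)=\tilde B_m(1)+B_m(u)$ for $u \geq 0$. Writing $s=j/m$ and $t=k/m$, the Brownian approximant simplifies to
\begin{align*}
\frac{t-s}{1+s}\big[\tilde B_m(1)+B_m(s)\big] - \big[B_m(t)-B_m(s)\big] = \frac{t+1}{1+s}\hat B_m(1+s) - \hat B_m(1+t).
\end{align*}
Successively applying the Brownian time-inversion $\bar W_m(y):=y\hat B_m(1/y)$ and the time-reversal $W_m(x):=\bar W_m(1)-\bar W_m(1-x)$ (each of which is again a standard Brownian motion), and substituting $x_s:=s/(s+1)$, $x_t:=t/(t+1)$, a direct computation gives
\begin{align*}
\frac{t+1}{1+s}\hat B_m(1+s) - \hat B_m(1+t) = (t+1)\big[W_m(x_t) - W_m(x_s)\big],
\end{align*}
so that $w(k/m)\hatE_m(k)$ is asymptotically equivalent to $(t+1)w(t)|W_m(x_t)-W_m(x_s)|$, which matches the right-hand side of \eqref{eq:ThmMainH01}.

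The open-end supremum is handled in two stages. For fixed $T$, the discrete maximum over $1\leq k\leq mT$, $0\leq j\leq k-1$ converges weakly to the corresponding continuous supremum on $\{(s,t):0\leq s\leq t\leq T\}$ by the continuous mapping theorem, with the uniform continuity of $1/\tilde w$ in Assumption \ref{assump:weighting}(2) providing the regularity needed to identify the limiting functional. For the tail $k>mT$, the uniform strong approximation in Assumption \ref{assump:approx} combined with $\sup_{t\geq T}(t+1)w(t)<\infty$ (Assumption \ref{assump:weighting}(1)) and the modulus of continuity of $W_m$ near $x=1$ gives a uniform-in-$m$ probability bound on $\sup_{k>mT}w(k/m)\hatE_m(k)$; the same estimate applied to the limit Brownian functional shows it is almost surely finite and that the truncation error vanishes as $T\to\infty$. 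Combining these ingredients yields \eqref{eq:ThmMainH01}, with this tail step being the main place where the strength of Assumption \ref{assump:approx} over a plain FCLT is used.
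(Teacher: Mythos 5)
Your overall architecture---strip the remainders via Assumption \ref{assump:remainder}, substitute the strong approximations of Assumption \ref{assump:approx}, dispose of $\hatSigma$ by a Slutsky argument, then identify the limit of the resulting Brownian functional---is the same as the paper's, which carries these steps out in Lemmas \ref{lem:RemoveRemainders}, \ref{lem:approx} and \ref{lem:plugcovar}. Your third step, however, is a genuinely different and rather elegant route to the final form of the limit: by concatenating $W_{m,2}$ and $W_{m,1}$ into a single Brownian motion $\hat B_m$ and applying time inversion and time reversal \emph{pathwise}, you obtain the identity $\tfrac{t+1}{1+s}\hat B_m(1+s)-\hat B_m(1+t)=(t+1)\bigl[W_m(x_t)-W_m(x_s)\bigr]$ with $x_u=u/(u+1)$ before any limit is taken (the algebra is correct). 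The paper instead first proves convergence to $\sup_t\max_s w(t)\bigl|W_1(t)-\tfrac{1+t}{1+s}W_1(s)+\tfrac{t-s}{1+s}W_2(1)\bigr|$ and only afterwards converts this \emph{in distribution} via the covariance identity $\{W(t)-tZ\}_{t\ge0}\eqd\{(1+t)W(t/(t+1))\}_{t\ge0}$ (Lemma \ref{lem:simplylimit}). Your pathwise version buys something real: it compactifies the time axis to $[0,1)$ before the supremum is analysed, so that all randomness lives in a single Brownian motion on a compact interval.

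The gap is in your final step, the passage from the discrete supremum over all $k\ge1$ to the continuous supremum over $t\in[0,\infty)$. Your tail argument establishes only a uniform-in-$m$ \emph{bound} on $\sup_{k>mT}w(k/m)\hatE_m(k)$, and from this you infer that ``the truncation error vanishes as $T\to\infty$''. That inference does not follow: the tail supremum is $\Op(1)$ but does \emph{not} tend to zero as $T\to\infty$ (for $w=w_0$ one has $(t+1)w_0(t)\equiv 1$ away from the cutoff, and the tail supremum converges to $\max_{0\le x\le1}|W_m(1)-W_m(x)|>0$ a.s.). What the truncation lemma actually requires is that the tail supremum exceed the head supremum by more than $\varepsilon$ only with vanishing probability. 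The modulus of continuity of $W_m$ near $x=1$ controls only the pairs with $s,t>T$; for $s\le T<t$ one is left comparing $(t+1)w(t)\,|W_m(x_T)-W_m(x_s)|$ with head-supremum terms carrying the weight $(t'+1)w(t')$ for some $t'\le T$, and Assumption \ref{assump:weighting} does not force $(t+1)\tildew(t)$ to be eventually monotone or convergent (e.g.\ $\tildew(t)=(1+t)^{-1}(2+\sin\log(1+t))$ satisfies the assumption but oscillates), so the relevant ratio need not be close to one. This is precisely where the paper invests its main effort: Lemma \ref{lem:ObtLimit} proves a.s.\ \emph{uniform} continuity of the processes $L^{(1)},L^{(2)}$ on the whole unbounded domain (using the law of the iterated logarithm to tame the behaviour at infinity together with the uniform continuity of $1/\tildew$), and then bounds the difference between discrete and continuous suprema in one stroke, with no truncation. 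The natural repair is already implicit in your own step 3: control the discretization error of $\phi(s,t)=(t+1)w(t)\,|W_m(x_t)-W_m(x_s)|$ over the whole domain at once, using the uniform continuity of $t\mapsto(t+1)\tildew(t)$ on $\R_{\geq 0}$ (which Assumption \ref{assump:weighting} does imply), the bound $|x_t-x_{t'}|\le|t-t'|$, and the modulus of continuity of $W_m$ on $[0,1]$---no head/tail split needed. As written, though, the tail step is not a proof.
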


For the sake of completeness, the reader should note that due to Assumption \ref{assump:weighting} the asymptotic behaviour of the weight function guarantees that the random variable on the right-hand side of \eqref{eq:ThmMainH01} is finite (with probability one).

\noindent In light of Theorem \ref{thm:mainH0} one can choose a constant $c(\alpha)$, such that
\begin{align}\label{ineq:calpha}
\Pb \bigg( \sup_{0 \leq t < \infty} \max_{0 \leq s \leq t} 
(t+1)w(t) \Big| W\Big(\dfrac{s}{s+1}\Big) - W\Big(\dfrac{t}{t+1}\Big) \Big|
> c(\alpha)\bigg) \leq \alpha~.
\end{align}

Note that for Theorem~\ref{thm:mainH0} we only require that $\hatSigma$ is a consistent estimator for the long-run variance (LRV) as $m \to \infty$. 
Under both, $H_0$ and $H_1$, such an estimator should be computed from the initial stable set, which prevents the estimate from being corrupted by possible changes/breaks [see also the discussion in Section \ref{sec4}].
In practice, the actual choice of LRV-estimator depends on the concrete application and is crucial for the performance of the procedure. 
A more extensive discussion on LRV-estimation (not only for change point problems) can be found in \cite{Andrews1991} or \cite{Shao2010}.

The following corollary then states that our approach leads to a level $\alpha$ detection scheme.
\begin{corollary}\label{cor:level}
Grant the assumptions of Theorem \ref{thm:mainH0} and further let $c(\alpha)$ satisfy inequality \eqref{ineq:calpha}, then 
\begin{align*}
\limsup_{m \to \infty}\; \Pb \bigg( \supkinf w(k/m)\hatE_m(k)> c(\alpha)\bigg) \leq \alpha~.
\end{align*}
\end{corollary}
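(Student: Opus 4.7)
This corollary is essentially a direct consequence of Theorem~\ref{thm:mainH0} via the Portmanteau theorem, so my plan is short. Write $X_m := \sup_{k=1}^{\infty} w(k/m)\hatE_m(k)$ and let $Z$ denote the random variable on the right-hand side of \eqref{eq:ThmMainH01}; Theorem~\ref{thm:mainH0} asserts that $X_m \convd Z$ as $m \to \infty$.

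The key step is to apply the Portmanteau theorem to the closed set $[c(\alpha),\infty) \subset \R$, which gives
\[
\limsup_{m \to \infty} \Pb\big(X_m \geq c(\alpha)\big) \leq \Pb\big(Z \geq c(\alpha)\big).
\]
Combined with the trivial inclusion $\{X_m > c(\alpha)\} \subseteq \{X_m \geq c(\alpha)\}$, this yields $\limsup_{m \to \infty} \Pb(X_m > c(\alpha)) \leq \Pb(Z \geq c(\alpha))$.

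The only non-trivial point is then bridging the gap between $\Pb(Z \geq c(\alpha))$ and $\Pb(Z > c(\alpha))$, the latter being bounded by $\alpha$ through \eqref{ineq:calpha}. I would handle this by verifying that the distribution of $Z$ is atomless on $(0,\infty)$: as $Z$ is a continuous functional of a non-degenerate $p$-dimensional Brownian motion, standard results on suprema of Gaussian processes (for example Tsirelson's theorem) imply $\Pb(Z = c(\alpha)) = 0$, so that $\Pb(Z \geq c(\alpha)) = \Pb(Z > c(\alpha)) \leq \alpha$. As a fallback one can approximate $c(\alpha)$ from above by continuity points of the CDF of $Z$, which form a dense subset of $\R$, apply Portmanteau at each such point, and pass to the limit by monotone convergence. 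This atomlessness of the limiting distribution is the only obstacle and is entirely routine.
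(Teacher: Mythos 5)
Your main argument is correct and is exactly the standard reasoning the paper leaves implicit: the corollary is stated without proof as an immediate consequence of Theorem~\ref{thm:mainH0}, and your route (Portmanteau on the closed set $[c(\alpha),\infty)$, then absolute continuity of the law of the limiting supremum on $(0,\infty)$ via the theory of suprema of Gaussian processes) is the clean way to make that rigorous. One caveat: your ``fallback'' does not actually work as stated --- approximating $c(\alpha)$ \emph{from above} by continuity points $c'>c(\alpha)$ only yields $\Pb(X_m>c')\le\Pb(X_m>c(\alpha))$, i.e.\ a bound in the wrong direction, and approximating from below still lands you at $\Pb(Z\ge c(\alpha))$, so the atomlessness of $Z$ at $c(\alpha)$ cannot be avoided by that device; fortunately it is not needed, since your primary argument already settles the point.
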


The limit distribution obtained in Theorem \ref{thm:mainH0} strongly depends on the considered weight function. 
A special family of functions that has received considerable attention [see \cite{Horvath2004}, \cite{Fremdt2015}, \cite{Kirch2018} among many others] is given by
\begin{align}\label{eq:threshold}
w_\gamma(t) 
= (1+t)^{-1}\max\Big\{ \Big(\dfrac{t}{1+t}\Big)^\gamma,\, \varepsilon \Big\}^{-1}
\qquad\text{with}\qquad 0\leq \gamma < 1/2~,
\end{align}
where the cutoff $\varepsilon>0$ can be chosen arbitrary small in applications and only serves to reduce the assumptions and technical arguments in the proof [see also \cite{Wied2013}].
With these functions the limit distribution in~\eqref{eq:ThmMainH01} can be simplified to an expression that is more easily tractable via simulations.
Straightforward calculations yield that Assumption \ref{assump:weighting} is satisfied by the function $w_\gamma\,$ and the limit distribution in Theorem~\ref{thm:mainH0} simplifies as follows.
\begin{corollary}\label{cor:simplify}
For a $p$-dimensional Brownian motion $W$ with independent components it holds that
\begin{align*}
\sup_{0 \leq t < \infty} \max_{0 \leq s \leq t} (t+1)w_\gamma(t) \Big| W\Big(\dfrac{s}{s+1}\Big) - &W\Big(\dfrac{t}{t+1}\Big) \Big|\\
&\eqd \sup_{0 \leq t < 1} \max_{0 \leq s \leq t} \dfrac{1}{\max\{ t^\gamma, \varepsilon\}} \Big| W(t) - W(s) \Big|
:= L_{1,\gamma}~.
\end{align*}
\end{corollary}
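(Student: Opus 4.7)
My plan is to reduce Corollary~\ref{cor:simplify} to a purely deterministic change of variables inside the limit distribution of Theorem~\ref{thm:mainH0}, after checking that $w_\gamma$ is an admissible weight.

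First, I would verify that $w_\gamma$ satisfies Assumption~\ref{assump:weighting} with $t_w = 0$, $T_w = \infty$ and $\tildew = w_\gamma$. Positivity and continuity on $\R_{\geq 0}$ are immediate since the inner maximum is bounded below by $\varepsilon > 0$. For the growth condition, $t\, w_\gamma(t) = \tfrac{t}{1+t}\cdot \max\{(t/(1+t))^\gamma,\varepsilon\}^{-1} \leq \varepsilon^{-1}$, so $\limsup_{t\to\infty} t\, w_\gamma(t) < \infty$. The only slightly delicate point is uniform continuity of
\[
1/w_\gamma(t) = (1+t)\,\max\{(t/(1+t))^\gamma,\varepsilon\}
\]
on $\R_{\geq 0}$. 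For $t$ near $0$ the inner maximum equals $\varepsilon$, so $1/w_\gamma$ is affine there; for large $t$ it equals $(1+t)^{1-\gamma} t^\gamma$, whose derivative tends to $1$ (hence is bounded); on the remaining compact interval, continuity yields uniform continuity by Heine--Cantor. Patching the three regions gives uniform continuity on all of $\R_{\geq 0}$.

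Second, I would substitute $w_\gamma$ into the right-hand side of~\eqref{eq:ThmMainH01}. The factor $(t+1)$ cancels the leading $(1+t)^{-1}$ in $w_\gamma(t)$, yielding
\[
(t+1)\, w_\gamma(t) = \frac{1}{\max\{(t/(t+1))^\gamma,\,\varepsilon\}},
\]
which depends on $t$ only through $v := t/(t+1)$. Then I would apply the substitution $u = s/(s+1)$ and $v = t/(t+1)$. This is a strictly increasing bijection $[0,\infty) \to [0,1)$, and it maps the index set $\{(s,t):0\leq s\leq t\}$ onto $\{(u,v):0\leq u\leq v\}$; moreover $W(s/(s+1)) = W(u)$ and $W(t/(t+1)) = W(v)$ for the same Brownian motion. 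The supremum on the left of Corollary~\ref{cor:simplify} therefore transforms into
\[
\sup_{0 \leq v < 1}\, \max_{0 \leq u \leq v}\, \frac{1}{\max\{v^\gamma,\varepsilon\}}\,\big| W(u) - W(v) \big|,
\]
which, after renaming $(u,v)\to (s,t)$ and using $|W(u)-W(v)| = |W(t)-W(s)|$, is exactly $L_{1,\gamma}$.

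Since the change of variables is a deterministic reparameterization applied to the same Brownian motion, this is in fact a pathwise identity, so the distributional equality $\eqd$ follows immediately. There is no genuinely hard step in this proof; the only point requiring mild care is the uniform continuity check in Assumption~\ref{assump:weighting}, which is precisely why the cutoff $\varepsilon>0$ is introduced in the definition of $w_\gamma$ (it removes the singularity of $(t/(1+t))^\gamma$ at the origin from $1/w_\gamma$).
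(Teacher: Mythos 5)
Your proof is correct and follows essentially the same route as the paper: cancel the factor $(t+1)$ against the leading $(1+t)^{-1}$ in $w_\gamma$, then use that $x \mapsto x/(1+x)$ is an increasing bijection from $[0,\infty)$ onto $[0,1)$ to reparameterize the supremum, giving a pathwise (hence distributional) identity. The verification of Assumption~\ref{assump:weighting} is a useful sanity check but is handled separately in the paper and is not needed for the corollary itself, which is a purely deterministic statement about the Brownian path.
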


\begin{remark}\label{rem:exactcdf1}
The cumulative distribution function of the random variable on the right-hand side in Corollary \ref{cor:simplify} is hard to derive in general.
However in the case of $\gamma=0$ and dimension $p=1$, an explicit formula can be obtained.
Therefor note that (if we ignore the cutoff constant $\varepsilon$) the following identity holds with probability one
\begin{align*}
\sup_{0 \leq t < 1} \max_{0 \leq s \leq t} \Big| W(t) - W(s) \Big|
= \max_{0 \leq t \leq 1} W(t) - \min_{0 \leq t \leq 1} W(t)~,
\end{align*}
where the distribution on the right-hand side is known as the \textit{Range of a Brownian motion} [see for instance \cite{Feller1951}].
Its distribution function can be found in \citet[p.~146]{Borodin1996} and is given by
\begin{align}\label{eq:BorodinFormula}
F_{L_1,\gamma=0}(x)
= 1 + 8 \sum_{k=1}^\infty (-1)^k \cdot k  \cdot \big(1- \Phi(kx)\big)~,
\end{align}
where $\Phi$ denotes the c.d.f. of a standard Gaussian random variable.
A corresponding result holds for the limit distribution in a closed-end scenario, see Section \refb{sec:ClosedEnd} of the online appendix [see \cite{Goesmann2020appendix}], where an additional parameter in the distribution function is associated with the monitoring length.
\end{remark}

\medskip

\noindent 
For the investigation of the consistency of the monitoring scheme \eqref{eq:hatE} we require the following assumption.
\begin{assump}\label{assump:alt}
Under the alternative $H_1$ defined in \eqref{globalhypo1} let 
\begin{align*}
\theta^{(1)}_m := \theta(F_1)=\theta(F_2)=\dots=\theta(F_{m+k^*_m-1})
\neq 
\theta^{(2)}_m := \theta(F_{m+k_m^*})=\theta(F_{m+k^*_m+1})=\cdots~,
\end{align*}
where the position of the change within the monitoring data $k_m^*\in \N$ may depend on $m$.
For the size of change suppose that
\begin{align*}
\sqrt{m} \Big| \theta^{(1)}_m - \theta^{(2)}_m \Big| \convm \infty~.
\end{align*}
Further assume that the process $\{\IF_t\}_{t \in \Z}$ and the remainders defined in Assumption \ref{assump:remainder} are of the following order before the change
\begin{align}\label{alt:beforeChange}
\dfrac{1}{\sqrt{m+k^*_m}} \bigg|\sum_{t=1}^{m+k_m^*-1} \IF_t \bigg|
= \Op(1)
\;\;\;
\text{and}
\;\;\;
\sqrt{m+k_m^*}| R_{1,m+k_m^*-1} | = \Op(1)~.
\end{align}
For the period following the change, assume that there exists a constant $c_a>0$ and distinct two cases:
\begin{enumerate}[(1)]
\item If $k^*_m/m = \mathcal{O}(1)$, suppose that
\begin{align}\label{alt:afterChange1}
\dfrac{1}{\sqrt{m}} \bigg|\sum_{t=m+k_m^*}^{m+k_m^*+\floor{c_am}} \IF_t \bigg|
= \Op(1)
\;\;\;
\text{and}
\;\;\;
\sqrt{m} \Big| R_{m+k_m^*,m+k_m^*+\floor{c_am}} \Big|
= \Op(1)
\end{align}
and for the cutoff constants in \eqref{def:cutoffsweighting}, assume that $t_w <  k_m^*/m + c_a \leq T_w$~.
\item If $k^*_m/m \to \infty$, suppose that
\begin{align}\label{alt:afterChange2}
\dfrac{1}{\sqrt{k_m^*}} \bigg|\sum_{t=m+k_m^*}^{m+k_m^*+\floor{c_ak_m^*}} \IF_t \bigg|
= \Op(1)
\;\;
\text{and}
\;\;
\sqrt{k_m^*} \Big| R_{m+k_m^*,m+k_m^*+\floor{c_ak_m^*}}\Big|
= \Op(1)~.
\end{align}
Assume additionally that the weight function satisfies $T=\infty$ and
\begin{align}\label{lim:thresholdExtra}
\liminf\limits_{t \to \infty} t\tilde{w}(t) > 0~.
\end{align}
\end{enumerate}
\end{assump}
\begin{remark}
The assumptions stated above are substantially weaker than those used to investigate the asymptotic properties of $\sup_{k=1}^{\infty} w(\tfrac{k}{m})\hatE_m(k)$ under the null hypothesis.
Basically, we only assume reasonable behavior of the time series before and after the change point and can drop the uniform approximation in Assumption \ref{assump:approx} and the uniform negligibility of the remainders in Assumption \ref{assump:remainder}.
It is easy to see, that the conditions on the sequence $\IF_t$ are already satisfied if both, its phases before and after the change fulfill a central limit theorem.
Finally, it is worth mentioning that the assumptions for the change position $k^*_m$ and size $| \theta^{(1)}_m - \theta^{(2)}_m| $ are very flexible as we allow both quantities to depend on $m$, where the latter can also tend to zero (sufficiently slow as $m \to \infty$).

For \textit{early} changes, that is $k_m^*/m = \mathcal{O}(1)$, it is obvious that the change has to occur before monitoring is stopped, where the inequality $k^*_m/m \leq T_w-c_a$ ensures that there is enough data, such that it can actually be detected.
On the other hand, the motivation for the inequality $t_w <  k_m^*/m + c_a$ is slightly more technical.
Roughly spoken, it guarantees, that the time frame $m+k^*_m,\dots,m+k_m^*+c_am$, which follows the change, is not completely covered by the weight function's cutoff at monitoring start.
For exactly this time frame we know by assertion \eqref{alt:afterChange1}, that the time series still behaves reasonable. 

For \textit{late} changes, that is $k^*_m/m \to \infty$, it is by Assumption \ref{assump:alt} not allowed to use a cutoff ($T_w<\infty$) in the weight function.
Here we rely on the extra assumption in \eqref{lim:thresholdExtra}, which defines a lower bound for the growth rate of the weight function.
Heuristically, this is necessary as it guarantees, that a sufficient amount of weight is assigned even to late time points.
The reader should note that this assumption is obviously fulfilled by the standard weighting defined in \eqref{eq:threshold}.
\end{remark}
\noindent The next theorem yields consistency under the alternative hypothesis.

\begin{theorem}\label{thm:mainH1}
Assume that the alternative hypothesis \eqref{globalhypo1} and Assumptions \ref{assump:weighting} and \ref{assump:alt} hold.
If further $\hatSigma$ is non-singular and weakly convergent to a non-singular, deterministic matrix, it holds that
\begin{align*}
\supkinf w(k/m)\hatE_m(k) \convp \infty~.
\end{align*}
Consequently,
$
\lim\limits_{m \to \infty} \Pb\Big( \sup\limits_{k=1}^\infty w(k/m)\hatE_m(k) > c \Big) = 1
$
holds for any constant $c \in \R$.
\end{theorem}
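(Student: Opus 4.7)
The strategy is to lower-bound $\sup_{k\geq 1} w(k/m)\hatE_m(k)$ by a single carefully chosen summand indexed by $(k,j)$ and show that this summand alone diverges in probability. The natural choice of $j$ is $j=k_m^*-1$, so that the compared estimators $\hattheta_1^{m+k_m^*-1}$ and $\hattheta_{m+k_m^*}^{m+k}$ come purely from the pre- and post-change regimes, estimating $\theta^{(1)}_m$ and $\theta^{(2)}_m$ respectively. For $k$ I would follow the split in Assumption~\ref{assump:alt}: take $k=k_m^*+\lfloor c_a m\rfloor$ in the early case, and $k=k_m^*+\lfloor c_a k_m^*\rfloor$ in the late case. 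In both cases $k-j$ matches exactly the window size for which Assumption~\ref{assump:alt} supplies partial-sum and remainder bounds on the post-change segment.

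Applying the linearization \eqref{eq:meta-remainder}, once with $F=F_1$ to the pre-change estimator and once with $F=F_2$ to the post-change one, and subtracting yields
\begin{align*}
\hattheta_1^{m+j} - \hattheta_{m+j+1}^{m+k}
= \bigl(\theta^{(1)}_m - \theta^{(2)}_m\bigr) + A_m - B_m,
\end{align*}
where $A_m=(m+j)^{-1}\sum_{t=1}^{m+j}\IF_t + R_{1,m+j}$ and $B_m=(k-j)^{-1}\sum_{t=m+j+1}^{m+k}\IF_t + R_{m+j+1,m+k}$. Dividing \eqref{alt:beforeChange} by $m+j$ gives $|A_m|=\Op((m+k_m^*)^{-1/2})$; dividing \eqref{alt:afterChange1} or \eqref{alt:afterChange2} by $k-j$ yields $|B_m|=\Op(m^{-1/2})$ in the early case and $\Op((k_m^*)^{-1/2})$ in the late case. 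Combined with the assumption that $\hatSigma$ converges to a non-singular deterministic matrix, so that $\|v\|_{\hatSigma^{-1}}\geq c_0 |v|$ holds with probability tending to one for some $c_0>0$, the reverse triangle inequality gives, with probability tending to one,
\begin{align*}
\|\hattheta_1^{m+j} - \hattheta_{m+j+1}^{m+k}\|_{\hatSigma^{-1}}
\geq c_0\bigl(|\theta^{(1)}_m-\theta^{(2)}_m| - |A_m| - |B_m|\bigr).
\end{align*}

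The quantitative core is then to verify that $w(k/m)(k-j)/\sqrt m$ has order at least $\sqrt m$ in both regimes. In the early case $(k-j)/\sqrt m$ is directly of order $\sqrt m$, and the cutoff condition $t_w < k_m^*/m + c_a \leq T_w$ together with continuity and positivity of $\tilde w$ on the bounded set that contains the values of $k/m$ keeps $w(k/m)$ bounded below by a positive constant. In the late case $(k-j)/\sqrt m$ grows faster than $\sqrt m$, but $k/m\to\infty$ and \eqref{lim:thresholdExtra} force $w(k/m)\geq c'\,m/k$ for $m$ large; since $k$ and $k_m^*$ are of the same order, the product again satisfies $w(k/m)(k-j)/\sqrt m\geq c''\sqrt m$. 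Multiplying through, the dominant term $c_0 w(k/m)(k-j)/\sqrt m \cdot |\theta^{(1)}_m-\theta^{(2)}_m|$ is bounded below by a constant multiple of $\sqrt m\,|\theta^{(1)}_m-\theta^{(2)}_m|$, which diverges to infinity by Assumption~\ref{assump:alt}. A short case-wise calculation shows that the error contribution $w(k/m)(k-j)/\sqrt m\cdot(|A_m|+|B_m|)$ is $\Op(1)$ in the early case and $\op(1)$ in the late case, hence negligible relative to the main term. This yields $w(k/m)\hatE_m(k)\convp\infty$ and therefore $\sup_{k\geq 1}w(k/m)\hatE_m(k)\convp\infty$, from which the exceedance statement for any finite $c$ is immediate.

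The main obstacle is the bookkeeping in the late-change case: one must balance $(k-j)/\sqrt m$, which diverges like $\sqrt{k_m^*}$, against $w(k/m)$, which must decay at exactly the right rate $m/k$ so that the product stays of order $\sqrt m$ while the stochastic contamination $w(k/m)(k-j)/\sqrt m\cdot(|A_m|+|B_m|)$ is still $\op(1)$. The extra lower-bound condition \eqref{lim:thresholdExtra} in case~(2) of Assumption~\ref{assump:alt} is exactly what makes this balancing possible.
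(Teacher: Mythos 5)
Your proposal is correct and follows essentially the same route as the paper's proof: the same choice of indices $j=k_m^*-1$ and $k=k_m^*+\lfloor c_a m\rfloor$ (resp.\ $k=k_m^*+\lfloor c_a k_m^*\rfloor$), the same reverse-triangle-inequality decomposition into a deterministic main term of order $\sqrt{m}\,|\theta^{(1)}_m-\theta^{(2)}_m|$ plus $\Op(1)$ (resp.\ $\op(1)$) stochastic remainders controlled via \eqref{alt:beforeChange}--\eqref{alt:afterChange2}, and the same use of the cutoff condition and of \eqref{lim:thresholdExtra} to bound the weight factor from below in the two regimes. The only cosmetic difference is that you multiply the weight factor through and therefore also invoke the upper bound $\limsup_{t\to\infty}t\tilde w(t)<\infty$ to control the error terms, whereas the paper keeps the weight factor outside the bracket and needs only its lower bound; both are valid.
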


\section{Some specific change point problems} \label{sec3}
In this section we briefly illustrate how the theory developed in Section \ref{sec2} can be employed to construct monitoring schemes for a specific parameter of the distribution function.
For the sake of brevity we restrict ourselves to the mean and the parameters in a linear model.
Other examples such as the variance or quantiles can be found in \cite{Dette2019}.

\subsection{Changes in the mean}\label{sec:mean}
The sequential detection of changes in the mean
\begin{align*}
\mu(F) = \E_F[X] = \int\limits_{\R^d} x dF(x)
\end{align*}
has been extensively discussed in the literature [see \cite{Aue2004}, \cite{Fremdt2014} or \cite{Hoga2017} among many others].

\noindent Is is easy to verify (and well known), that the influence function for the mean is given by
\begin{align*}
\IF(x, F, \mu) = x - \E_F[X]~,
\end{align*}
and Assumption \ref{assump:remainder} and the corresponding parts of Assumption \ref{assump:alt} are obviously satisfied in this case since we have $R_{i,j}=0$ for all $i,j$.
For the remaining assumptions in Section 2 it now suffices that the centered time series $\big\{X_t -\E[X_t]\big\}_{t \in \Z}$ fulfills Assumption \ref{assump:approx}, which also implies the remaining part of Assumption \ref{assump:alt} [see also the discussion in Remark \ref{rem:assump}].
In this situation both, Theorem \ref{thm:mainH0} and Theorem \ref{thm:mainH1} are valid provided that the chosen weighting fulfills Assumption \ref{assump:weighting}.

\subsection{Changes in linear models}\label{sec:linearmodel}
Consider the time-dependent linear model
\begin{align}\label{eq:LM}
Y_t = P_t^\top\beta_t + \varepsilon_t~,
\end{align}
where the random variables $\{P_t\}_{t \in \N}$ are the $\R^{p}$-valued predictors, $\beta_t \in \R^p$ is a $p$-dimensional parameter and $\{\varepsilon_t\}_{t \in \N}$ is a centered random sequence independent of $\{P_t\}_{t \in \N}$.
The identification of changes in the vector of parameters in the linear model represents the prototype problem in sequential change point detection as it has been extensively studied in the literature [see \cite{Chu1996}, \cite{Horvath2004}, \cite{Aue2009}, \cite{Fremdt2015}, among many others].

This situation is covered by the general theory developed in Section \ref{sec2} and \ref{sec3}.
To be precise let
\begin{align}\label{eq:defXLM}
X_t = (P_t^\top, Y_t)^{\top} \;\in \R^d~, \;\;d=p+1\;\;\text{and}\;\; t=1,2\dots
\end{align}
be the joint vectors of predictor and response with (joint) distribution function $F_t$, such that the marginal distributions of $Y_t$ and $P_t$ are given by
\begin{align*}
F_{t,Y} = F_t(\infty,\dots,\infty,\cdot)
\;\;\;
\text{and}
\;\;\;
F_{t,P} = F_t(\cdot,\dots,\cdot,\infty)~,
\end{align*}
respectively, where we will assume that the predictor sequence is strictly stationary, that is $F_{t,P} = F_P$.
In a first step we will consider the case, where the moment matrix
\begin{align*}
M:= \E[P_1P_1^\top] = \int_{\R^d} \rho \cdot \rho^\top dF_P(\rho)
\end{align*}
is known (we will discuss later on why this assumption is non-restrictive) and non-singular.
In this setup, the parameter $\beta_t$ can be represented as a functional of the distribution function $F_t$, that is
\begin{align*}
\beta_t= \beta(F_t)
:= M^{-1}\cdot \int_{\R^d}\rho \cdot y dF_t(y,\rho)
= M^{-1}\cdot \E\big[P_tY_t\big]~,
\end{align*}
which leads to the estimators
\begin{align}\label{eq:estimatorLM}
\hatbeta_i^j 
=\beta(\hatFij)
&= \dfrac{M^{-1}}{j-i+1}\sum_{t=i}^jP_tY_t
\end{align}
from the sample $(P_i,Y_i),\dots,(P_j,Y_j)$.
To compute the influence function, let $(\rho,y) \in \R^p \times \R$, then
\begin{align*}
\begin{split}
\IF\big((\rho,y),F_t,\beta\big) 
&= \lim_{\eta \searrow 0} \dfrac{\beta\big((1-\eta)F_t + \eta \delta_{(\rho,y)}\big) - \beta(F_t)}{\eta}\\
&= \lim_{\eta \searrow 0}
M^{-1}
\Bigg[\dfrac{(1-\eta)\E[P_tY_t] + \eta \rho y}{\eta}\Bigg]
-\dfrac{\beta_t}{\eta}
= M^{-1}\big(\rho y - \E[P_tY_t] \big)~,
\end{split}
\end{align*}
which is the influence function (for $\beta$) in the linear model stated above [see for example \cite{Hampel1986} for a comprehensive discussion on influence functions].
In the following, we will use the notation $\IF_t = \IF\big(X_t,F_t,\beta\big)$ again.
Note that
\begin{align}\label{eq:infLM}
\IF_t
= M^{-1}\big(P_tY_t - \E[P_tY_t] \big) = M^{-1}P_tY_t - \beta_t ~,
\end{align}
which directly gives $\E[\IF_t]=0$.
Assuming additionally stationarity of $\{\varepsilon_t\}_{t \in N}$, it follows that the random sequence $\{X_t\}_{t\in \N}$ is stationary under the null hypothesis.
In this case, the linearization defined in \eqref{eq:meta-remainder} simplifies to
\begin{align}\label{eq:linearizationLM}
\begin{split}
\hatbeta_i^j - \beta_1
=\beta(\hatFij) - \beta_1
&= \dfrac{M^{-1}}{j-i+1}\sum_{t=i}^jP_tY_t - \beta_1 
= \dfrac{1}{j-i+1}\sum_{t=i}^j\big(M^{-1}P_tY_t -\beta_t\big)\\
&= \dfrac{1}{j-i+1}\sum_{t=i}^j \IF_t~.
\end{split}
\end{align}
Consequently,  the remainders in \eqref{eq:meta-remainder}  vanish and Assumption \ref{assump:remainder} is obviously satisfied.
Next, note that the long-run variance matrix is given by
\begin{align}\label{eq:lrvLM}
\Sigma
= \sum_{t \in \mathbb{Z}}
\Cov\big(\IF_0,~\IF_t\big)
= M^{-1}\Gamma M^{-1}
\end{align}
with $\Gamma = \sum_{t \in \mathbb{Z}}\Cov\big(Y_0P_0,~Y_tP_t\big)$, which can be estimated by $\hatSigma= M^{-1}\hatGamma M^{-1}$ where $\hatGamma$ is an estimator for $\Gamma$.
Observing \eqref{eq:linearizationLM} it is now easy to see that 
in the resulting  statistic $\hatE_{m}$ the matrix $M$ cancels out, that is 
\begin{align}\label{eq:rewritehatE}
\begin{split}
\hatE_m(k)
&= m^{-1/2}\max_{j=0}^{k-1} (k - j) \Big\Vert \hatbeta_{1}^{m+j} - \hatbeta_{m+j+1}^{m+k} \Big\Vert_{\hatSigma^{-1}}\\
&= m^{-1/2}\max_{j=0}^{k-1} (k - j) \Big\Vert \dfrac{1}{m+j}\sum_{t=1}^{m+j} Y_tP_t - \dfrac{1}{k-j}\sum_{t=m+j+1}^{m+k} Y_tP_t \Big\Vert_{\hatGamma^{-1}}
\end{split}
\end{align}
and for this reason it does not depend on the matrix $M$.
We therefore obtain the following result, which describes the asymptotic properties of the monitoring scheme based on the statistic $\hat E_{m}$ for a change in the parameter in the linear regression model
\eqref{eq:LM}. 
The proof is a direct consequence of Theorems \ref{thm:mainH0} and \ref{thm:mainH1}.

\begin{corollary}\label{cor:LM1}
Assume that the predictor sequence $\{P_t\}_{t \in \N}$ and the centered sequence $\{\varepsilon_t\}_{t \in \N} $ are strictly stationary and the second moment matrix $M=\E[P_1P_1^\top]$ is non-singular.
Further suppose that the sequences $\{P_t\}_{t \in \N}$ and $\{\varepsilon_t\}_{t \in \N}$ are independent and let the weight function under consideration fulfill Assumption \ref{assump:weighting}.
\begin{enumerate}[(i)]
\item Under the null hypothesis $H_0$ of no change, it follows that the sequence $\{\IF_t\}_{t \in \N}$ defined in \eqref{eq:infLM} is strictly stationary.
Assume further that this sequence admits the approximation in Assumption \ref{assump:approx} and that $\hatGamma_m$ is a non-singular, consistent estimator of the non-singular long-run variance matrix $\Gamma$ defined in \eqref{eq:lrvLM}.
Then monitoring based on the statistic $\hatE$ in \eqref{eq:rewritehatE} is an asymptotic level $\alpha$ procedure.
\item Under the alternative hypothesis $H_1$ suppose that Assumption~\ref{assump:alt} is fulfilled.
If further $\hatGamma$ is non-singular and weakly convergent to a non-singular, deterministic matrix, the monitoring based on the statistic $\hatE$ in \eqref{eq:rewritehatE} is consistent.
\end{enumerate}
\end{corollary}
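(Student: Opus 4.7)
My plan is to derive Corollary \ref{cor:LM1} as a direct application of Theorems \ref{thm:mainH0} and \ref{thm:mainH1} to the functional $\theta = \beta$ acting on the joint vectors $X_t = (P_t^\top, Y_t)^\top$ defined in \eqref{eq:defXLM}, so the work reduces to verifying the hypotheses of those theorems in the regression setting and to reconciling the factor $\hatSigma^{-1}$ with the estimator $\hatGamma^{-1}$ that actually appears in the computable form \eqref{eq:rewritehatE}.

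For part (i), the first step is to note that under $H_0$ the assumed strict stationarity and mutual independence of $\{P_t\}_{t\in\N}$ and $\{\varepsilon_t\}_{t\in\N}$ imply strict stationarity of $\{X_t\}_{t\in\N}$, so that $F_t=F$ and hence the sequence $\{\IF_t\}_{t\in\N}$ in \eqref{eq:infLM} is strictly stationary as required in Assumption \ref{assump:approx}; the approximation part of that assumption is explicitly assumed. Assumption \ref{assump:remainder} is trivial, since the linearization \eqref{eq:linearizationLM} gives $R_{i,j}\equiv 0$. Assumption \ref{assump:weighting} is assumed, and setting $\hatSigma=M^{-1}\hatGamma M^{-1}$ one verifies from \eqref{eq:lrvLM}, the non-singularity of $M$ and the consistency and non-singularity of $\hatGamma$, that $\hatSigma$ is a consistent non-singular estimator of $\Sigma$. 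Finally, the cancellation identity in \eqref{eq:rewritehatE}, which follows from $\|v\|_{\hatSigma^{-1}}^2 = v^\top M \hatGamma^{-1} M v$ combined with $\hatbeta_1^{m+j}-\hatbeta_{m+j+1}^{m+k} = M^{-1}\big(\tfrac{1}{m+j}\sum_{t=1}^{m+j}P_tY_t - \tfrac{1}{k-j}\sum_{t=m+j+1}^{m+k}P_tY_t\big)$, shows that the monitoring statistic is well defined without knowing $M$. All assumptions of Theorem \ref{thm:mainH0} being in force, Corollary \ref{cor:level} yields the asymptotic level $\alpha$ statement.

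For part (ii), Assumption \ref{assump:alt} is imposed directly, and the only remaining point is that the convergence of $\hatGamma$ to a non-singular deterministic matrix $\Gamma$ transfers to $\hatSigma=M^{-1}\hatGamma M^{-1}$ by the continuous mapping theorem, with the limit $M^{-1}\Gamma M^{-1}$ again non-singular. Theorem \ref{thm:mainH1} then applies verbatim and delivers the consistency statement. The only mildly subtle point in the whole argument is the bookkeeping that produces the $\hatGamma^{-1}$ form in \eqref{eq:rewritehatE}; once this algebraic simplification is in hand, no genuine obstacle remains, and the result is indeed a corollary of the general theorems.
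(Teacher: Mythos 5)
Your proposal is correct and follows essentially the same route as the paper: the text of Section \ref{sec:linearmodel} establishes exactly the points you list (stationarity of $\{\IF_t\}$, vanishing remainders via \eqref{eq:linearizationLM}, the identity $\Sigma = M^{-1}\Gamma M^{-1}$ and the resulting cancellation of $M$ in \eqref{eq:rewritehatE}), after which the paper declares the corollary a direct consequence of Theorems \ref{thm:mainH0} and \ref{thm:mainH1}. Your bookkeeping of $\hatSigma^{-1} = M\hatGamma^{-1}M$ and the continuous-mapping step for part (ii) are exactly the intended verifications.
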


\begin{remark}\label{rem:WeightedLM}
If one replaces (the unknown) moment matrix $M$ on the right-hand side of \eqref{eq:estimatorLM} by an appropriate estimate, that is
\begin{align*}
\hat{\hat{\beta}}_i^j = \dfrac{\big(\hat{M}_i^j\big)^{-1}}{j-i+1}\sum_{t=i}^jP_t Y_t
\;\;\;\text{with}\;\;\;
\hat{M}_i^j =  \dfrac{1}{j-i+1} \sum_{t=i}^{j} P_tP_t^\top~,
\end{align*}
one obtains a modified statistic given by
\begin{align}\label{eq:modEhatLM}
\begin{split}
\hat{\hat{E}}_m(k) 
&= m^{-1/2}\max_{j=0}^{k-1} (k - j) \Big\Vert \hat{\hat{\beta}}_{m+j+1}^{m+k} - \hat{\hat{\beta}}_{1}^{m+j}\Big\Vert_{\hat{\hat{\Sigma}}_m^{-1}}\\
&= m^{-1/2}\max_{j=0}^{k-1} \Big\Vert \big(\hat{M}_{m+j+1}^{m+k}\big)^{-1}\sum_{t=m+j+1}^{m+k} P_t\big(Y_t -P_t^\top \hat{\hat{\beta}}_{1}^{m+j} \big)\Big\Vert_{\hat{\hat{\Sigma}}_m^{-1}}~,
\end{split}
\end{align}
where $\hat{\hat{\Sigma}}_m$ denotes an appropriate long-run variance estimator.
Note that in this situation the dependence of the unknown moment matrix $M$ (or its estimators) cannot cancel out as observed in \eqref{eq:rewritehatE}.
The modified statistic can be reasonable to employ if - for example - possible changes in the distribution of $P_t$ have to be taken into account.
However as equation \eqref{eq:modEhatLM} illustrates the modified statistic $\hat{\hat{E}}$ can equivalently be written as weighted residual-based approach.
This kind of phenomena is already known in the literature, as \cite{Huskova2005} describe this for a similar statistic in linear models.
\end{remark}

\section{Finite sample properties}\label{sec4}
In this section we investigate the finite sample properties of our monitoring procedure and demonstrate its superiority with respect to the available methodology.
We choose the following two statistics as our benchmark
\begin{align}\label{eq:otherStatistics}
\begin{split}
\hat{Q}_m(k) &:= \dfrac{k}{m^{1/2}} \Big\Vert \hattheta_{1}^{m} - \hattheta_{m}^{m+k} \Big\Vert_{\hatSigma^{-1}}~,\\
\hat{P}_m(k) &:= \max_{j=0}^{k-1} \dfrac{k - j}{m^{1/2}} \Big\Vert \hattheta_{1}^{m} - \hattheta_{m+j+1}^{m+k} \Big\Vert_{\hat\Sigma^{-1}}~.
\end{split}
\end{align}
The procedure based on $\hat{Q}$ was originally proposed by \cite{Horvath2004} for detecting changes in the parameters of linear models and since then reconsidered for example by \cite{Aue2012}, \cite{Wied2013} and \cite{Pape2016} for the detection of changes in the CAPM-model, correlation and variances, respectively.
A statistic of the type $\hat{P}$ was recently proposed by \cite{Fremdt2015} and has been already reconsidered by \cite{Kirch2018}.
In the simulation study we will restrict ourselves to the commonly used class of weight functions $w_\gamma$ defined in \eqref{eq:threshold}, where we set the involved, technical constant $\varepsilon=10^{-10}$ when computing the statistics.
Under the assumptions made in Section \ref{sec2}, it can be shown by similar arguments as given in Section \refb{app:technical} of the online appendix [see \cite{Goesmann2020appendix}] that
\begin{align}\label{conv:limitQ}
\supkinf w_\gamma(k/m)\hat{Q}_m(k) \convd
\sup_{0 \leq t < 1} \dfrac{| W(t) |}{\max\{ t^\gamma, \varepsilon\}} =: L_{2,\gamma}
\end{align}
and
\begin{align}\label{conv:limitP}
\supkinf w_\gamma(k/m)\hat{P}_m(k) \convd 
\sup_{0 \leq t < 1} \max_{0 \leq s \leq t} \dfrac{1}{\max\{ t^\gamma, \varepsilon\}} \Big| W(t) - \dfrac{1-t}{1-s}W(s) \Big| =: L_{3,\gamma} ~,
\end{align}
where $W$ denotes a $p$-dimensional Brownian motion.
For detailed proofs (under slightly different assumptions) of \eqref{conv:limitQ} and \eqref{conv:limitP}, the reader is relegated to \cite{Horvath2004} and \cite{Fremdt2015}, where procedures of these types are considered in the special case of a linear model.

Recall the notation of $L_{1,\gamma}$ introduced in Corollary \ref{cor:simplify}.
By \eqref{conv:limitQ}, \eqref{conv:limitP} and Corollary \ref{cor:level} the necessary critical values for the procedures $\hat{E}$, $\hat{Q}$ and $\hat{P}$ combined with weighting $w_\gamma$ are given as the $(1-\alpha)$-quantiles of the distributions $L_{1,\gamma}$, $L_{2,\gamma}$ and $L_{3,\gamma}$, respectively and can easily be obtained by Monte Carlo simulations.
The quantiles are listed in Table \ref{table:criticalp1} for dimensions $p=1$ and $p=2$ and have been calculated by $10000$ runs simulating the corresponding distributions where the underlying Brownian motions have been approximated on a grid of $5000$ points.
In Sections \ref{sec:simMean} and \ref{sec:simLM} below, we will examine the finite sample properties of the three statistics for the detection of changes in the mean and in the regression coefficients of a linear model, respectively.
All subsequent results presented in these sections are based on 1000 independent simulation runs and a fixed test level of $\alpha = 0.05$. 
\begin{table}[h]
\centering
\begin{tabular}{c|c|ccc|ccc|ccc}
&& \multicolumn{3}{c|}{$L_{1,\gamma}$} & \multicolumn{3}{c|}{$L_{2,\gamma}$} & \multicolumn{3}{c}{$L_{3,\gamma}$} \\
\hline
p &$\gamma$ \textbackslash $\alpha$ & 0.01 & 0.05 & 0.1 & 0.01 & 0.05 & 0.1 & 0.01 & 0.05 & 0.1 \\
\hline
\hline
\multirow{3}{*}{1} &0  & 3.0233 & 2.4977 & 2.2412  & 2.7912 & 2.2365 & 1.9497 & 2.8262 & 2.2599 & 1.9914\\
\cline{2-11}
&0.25& 3.1050 & 2.5975 & 2.3542 & 2.9445 & 2.3860 & 2.1060 & 2.9638 & 2.4296 & 2.1758\\
\cline{2-11}
&0.45& 3.4269 & 2.9701 & 2.7398 & 3.3015 & 2.7992 & 2.5437 & 3.3817 & 2.9241 & 2.7002\\
\hline
\hline
\multirow{3}{*}{2} &0   & 3.4022 & 2.8943 & 2.6562 & 3.2272 & 2.6794 & 2.4008 & 3.2461 & 2.6957 & 2.4266 \\
\cline{2-11}
&0.25& 3.5279 & 3.0948 & 2.7781 & 3.3322 & 2.7981 & 2.5481 & 3.3630 & 2.8433 & 2.5911 \\
\cline{2-11}
&0.45& 3.8502 & 3.3912 & 3.1509 & 3.7010 & 3.2046 & 2.9543 & 3.7467 & 3.2966 & 3.0620 \\
\end{tabular}
\caption{\it (1-$\alpha$)-quantiles of the distributions $L_{1,\gamma}$, $L_{2,\gamma}$ and $L_{3,\gamma}$ for different choices of $\gamma$ and different dimensions of the parameter.
The cutoff constant was set to $\varepsilon=0$.
The results for $L_{2,\gamma}$ and $L_{3,\gamma}$ for $p=1$ are taken from  \cite{Horvath2004} and \cite{Fremdt2015}, respectively.
The quantiles for $L_{1,0}$ for $p=1$ were computed with respect to formula \eqref{eq:BorodinFormula}.
\label{table:criticalp1}}
\end{table}

\subsection{Changes in the mean}\label{sec:simMean}
In this section we will compare the finite sample properties of the procedures based on the statistics $\hatE, \hatP$ and $\hatQ$ for changes in the mean as outlined in Section \ref{sec:mean}.
Here we test the null hypothesis of no change which is given by
\begin{align}\label{LMhypo0sim}
H_0&:\; \mu_1 = \dots = \mu_m = \mu_{m+1} =\mu_{m+2} = \ldots ~,
\end{align}
while the alternative, that the parameter $\mu_t$ changes beyond the initial data set, is defined as
\begin{align}\label{LMhypo1sim}
 H_1&:\; \exists k^{\star} \in \N:\;\;
\mu_1 = \dots =\mu_{m+k^{\star}-1} \neq \mu_{m+k^{\star}} = \mu_{m+k^{\star}+1} = \ldots \;.
\end{align}
We will consider four different data generating models, one white noise process and
three autoregressive processes with different levels of temporal dependence controlled by the AR-parameter.
To be precise we consider the models
\begin{enumerate}[(M1)]
\item $X_{t} = \varepsilon_t$~,
\item $X_{t} = 0.1X_{t-1} + \varepsilon_t$~,
\item $X_{t} = 0.5X_{t-1} + \varepsilon_t$~,
\item $X_{t} = 0.7X_{t-1} + \varepsilon_t$~,
\end{enumerate}
where $\{\varepsilon_t\}$ is an i.i.d. sequence of standard Gaussian random variables.
For the AR(1)-processes defined in models (M2)-(M4), we create a burn-in sample of 100 observations in the first place.
To simulate the alternative hypotheses, changes in the mean are added to the data, that is 
\begin{align*}
X^{(\delta)}_t = 
\begin{cases}
X_t\;\;&\text{if}\;\; t < m+k^*~,\\
X_t + \delta \;\;&\text{if}\;\; t \geq m+k^*~,\\
\end{cases}
\end{align*}
where $\delta = \E[X_{m+k^*}] - \E[X_{m+k^*-1}]$ denotes the desired change amount.
For the necessary long run variance estimation we employ the well known quadratic spectral estimator [see \citep{Andrews1991}] with its implementation in the R-package `sandwich' [see \cite{Zeileis2004}].
To take into account the possible appearance of changes, only the initial stable segment $X_1,\dots, X_m$ is used for this estimate.
This restriction is standard in the literature [see for example \cite{Horvath2004}, \cite{Wied2013}, or \cite{Dette2019} among many others], and we will briefly discuss ideas to improve this in our outlook in Section \ref{sec:ConAndOut}.
The bandwidth involved in the estimator is chosen as $\log_{10}(m)$ for models (M1) and (M2).
To take into account the stronger temporal dependence we take a bandwidth of $\log_{10}(m^4)$ for the models (M3) and (M4).

In Table \ref{tab:meanOE1} and \ref{tab:meanOE2} we display the type I error for the four time series models (M1)-(M4) and different choices of $\gamma$ in the weight functions.
The principal observation for Table \ref{tab:meanOE1} is, that all three statistical procedures offer a reasonable approximation of the desired nominal level of $\alpha=0.05$ for the models (M1) and (M2).
The results for the weak dependent model (M2) are slightly worse than those for the white noise model (M1).

In Table \ref{tab:meanOE2} it can be seen that the nominal approximation is quite imprecise for the stronger dependent models (M3) and (M4) especially for an initial sample size of $m=100$.
This effect seems to be primary caused by a less precise estimation of the long-run variance and the approximation improves with larger initial sample size $m$, such that the type I error is considerably closer to $5\%$ for $m=400$.
To support this conjecture, we also report the type I error for simulations, in which we used the true long-run variance instead of an estimate, in Table \ref{tab:meanOE2}.
This demonstrates a much more sound approximation of the desired test level of $5\%$ for $m=100$ and $m=200$.

To discuss the performance under the alternative we illustrate the power of the procedures for increasing values of the change and different change positions for models (M1) and (M2) with $m=100$ in Figure \ref{fig:OE1} and for models (M3) and (M4) with $m=200$ in Figure \ref{fig:OE2}.
As the results are very similar we only report the choice $\gamma=0$ here and provide results for $\gamma=0.45$ in the online appendix.
The basic tendency observable in Figures \ref{fig:OE1} and \ref{fig:OE2} is concordant: While the procedures behave similar for a change close to the initial data set (first row), the method based on $\hatE$ is clearly superior to the others the more the distance to the initial set grows.
\revTwo{
The advantage in power is not visible for changes occuring close to the intial training set, where the other procedures perform slightly better.%
}

To give an example, consider the right plot of the first row in Figure \ref{fig:OE1}.
Here the test based on the statistic $\hatE$ already has a power of 62.8\% for a change of $\delta=0.3$, whereas the tests based on the statistics $\hatP$ and $\hatQ$ have power of 43.7\% and 42.4\%, respectively.
The superior performance of $\hatE$ can most likely be explained by the more accurate estimate of the pre-change parameter by $\hattheta_{1}^{m+j}$, while the other statistics only involve the estimator $\hattheta_{1}^{m}$ [see formulas \eqref{eq:hatE} and \eqref{eq:otherStatistics}].

For the sake of an appropriate understanding of our findings, the reader should be aware of the fact, that - although we consider open-end procedures here - simulations have to be stopped eventually.
Here we chose this stopping point as $1000$ ($m=50)$, $3000$ ($m=100$, $m=200$) or $4000$ ($m=400$) observations and it is expectable that the testing power of all procedures increases with a later stopping point.
Therefore the observed superiority of $\hatE$ refers to \textit{the type II error until the specified stopping point}.
\revTwo{
The theory developed in Section~\ref{sec2} also covers the case with a preselected end of the monitoring period.
While the statistic for monitoring is the same, the quantile is chosen differently leading to a detector that has higher power if the change is included in the monitoring window and no power if the true change occurs after monitoring ends.
We discuss this in the online appendix.
}

\begin{table}[h]
\centering
\begin{tabular}{c|c|ccc|ccc}
& & \multicolumn{3}{c|}{(M1)} & \multicolumn{3}{c}{(M2)}\\
\hline 
\hline 
&&&&&&&\\[-1em]
$m$ & $\gamma$ & $\hatE$ & $\hatQ$ & $\hatP$ &$\hatE$ & $\hatQ$ & $\hatP$\\ 
\hline
\hline 
\multirow{3}{*}{50} 
& $0$    &  4.8\% &  5.3\% &  5.3\% &  8.4\% &  8.8\% &  9.0\% \\
\cline{2-8}
& $0.25$ &  5.0\% &  5.0\% &  5.3\% &  8.9\% &  8.4\% &  8.3\% \\
\cline{2-8}
& $0.45$ &  4.5\% &  4.4\% &  3.9\% &  7.5\% &  7.4\% &  6.4\% \\
\hline
\hline
\multirow{3}{*}{100}
& $0$    &  4.1\% &  4.4\% &  4.6\% &  6.8\% &  6.3\% &  6.6\% \\
\cline{2-8}
& $0.25$ &  5.0\% &  5.4\% &  5.6\% &  7.3\% &  6.7\% &  6.9\% \\
\cline{2-8}
& $0.45$ &  6.0\% &  6.2\% &  5.2\% &  7.0\% &  6.4\% &  6.0\% \\
\hline
\end{tabular}
\caption{\it Type I error for the open-end procedures based on $\hatE$, $\hatQ$ and $\hatP$ at 5\% nominal size.
The size of the known stable data was set to $m=50$ (upper part), $m=100$ (lower part).}
\label{tab:meanOE1}
\end{table}

\begin{table}[h]
\small
\centering
\begin{tabular}{c|c|ccc|ccc}
& & \multicolumn{3}{c|}{(M3)} & \multicolumn{3}{c}{(M4)}\\
\hline 
\hline 
&&&&&&&\\[-1em]
$m$ & $\gamma$ & $\hatE$ & $\hatQ$ & $\hatP$ &$\hatE$ & $\hatQ$ & $\hatP$\\
\hline
\hline
\multirow{3}{*}{100} 
& $0$    & 12.1\% (3.2) & 11.4\% (4.5) & 11.8\% (4.4) & 16.3\% (2.9) & 15.0\% (4.1) & 15.9\% (4.0) \\
\cline{2-8}
& $0.25$ & 13.7\% (3.2) & 11.8\% (3.8) & 12.5\% (3.8) & 18.1\% (3.1) & 16.3\% (3.5) & 17.3\% (3.5) \\
\cline{2-8}
& $0.45$ & 13.2\% (2.6) & 12.2\% (2.8) & 11.6\% (2.3) & 16.6\% (2.1) & 14.3\% (2.3) & 13.9\% (1.8) \\
\hline
\hline
\multirow{3}{*}{200}
& $0$    &  7.6\% (3.0) &  8.1\% (4.0) &  8.4\% (4.1) &  9.4\% (2.6) & 10.5\% (3.5) & 10.6\% (3.5) \\
\cline{2-8}
& $0.25$ &  8.4\% (3.5) &  7.6\% (4.0) &  8.3\% (4.1) & 11.4\% (3.0) & 11.4\% (3.8) & 11.6\% (3.8) \\
\cline{2-8}
& $0.45$ &  8.7\% (3.2) &  8.1\% (3.2) &  7.4\% (2.8) & 11.3\% (2.6) &  10.6\% (2.7) & 10.2\% (2.4) \\
\hline
\hline
\multirow{3}{*}{400}
& $0$    &  5.0\% (2.8) &  6.0\% (3.5) &  6.2\% (3.4) &  7.3\% (2.6) &  7.8\% (3.6) &  8.2\% (3.4)\\
\cline{2-8}
& $0.25$ &  6.2\% (3.5) &  6.2\% (4.0) &  6.3\% (4.1) &  8.2\% (3.1) &  8.1\% (3.8) &  8.7\% (3.6)\\
\cline{2-8}
& $0.45$ &  6.9\% (3.1) &  6.2\% (3.3) &  5.7\% (2.9) &  7.8\% (2.9) &  7.9\% (2.9) &  7.0\% (2.6)\\
\hline
\end{tabular}
\caption{\it Type I error for the open-end procedures based on $\hatE$, $\hatQ$ and $\hatP$ at 5\% nominal size.
The size of the known stable data was set to $m=100$ (upper part), $m=200$ (middle part) and $m=400$ (lower part).
In brackets we report the result of simulations, in which the long-run variance estimator has been replaced by the true long-run variance.
}
\label{tab:meanOE2}
\end{table}

\begin{figure}[H]
\includegraphics[width=15cm,height=8.85cm]{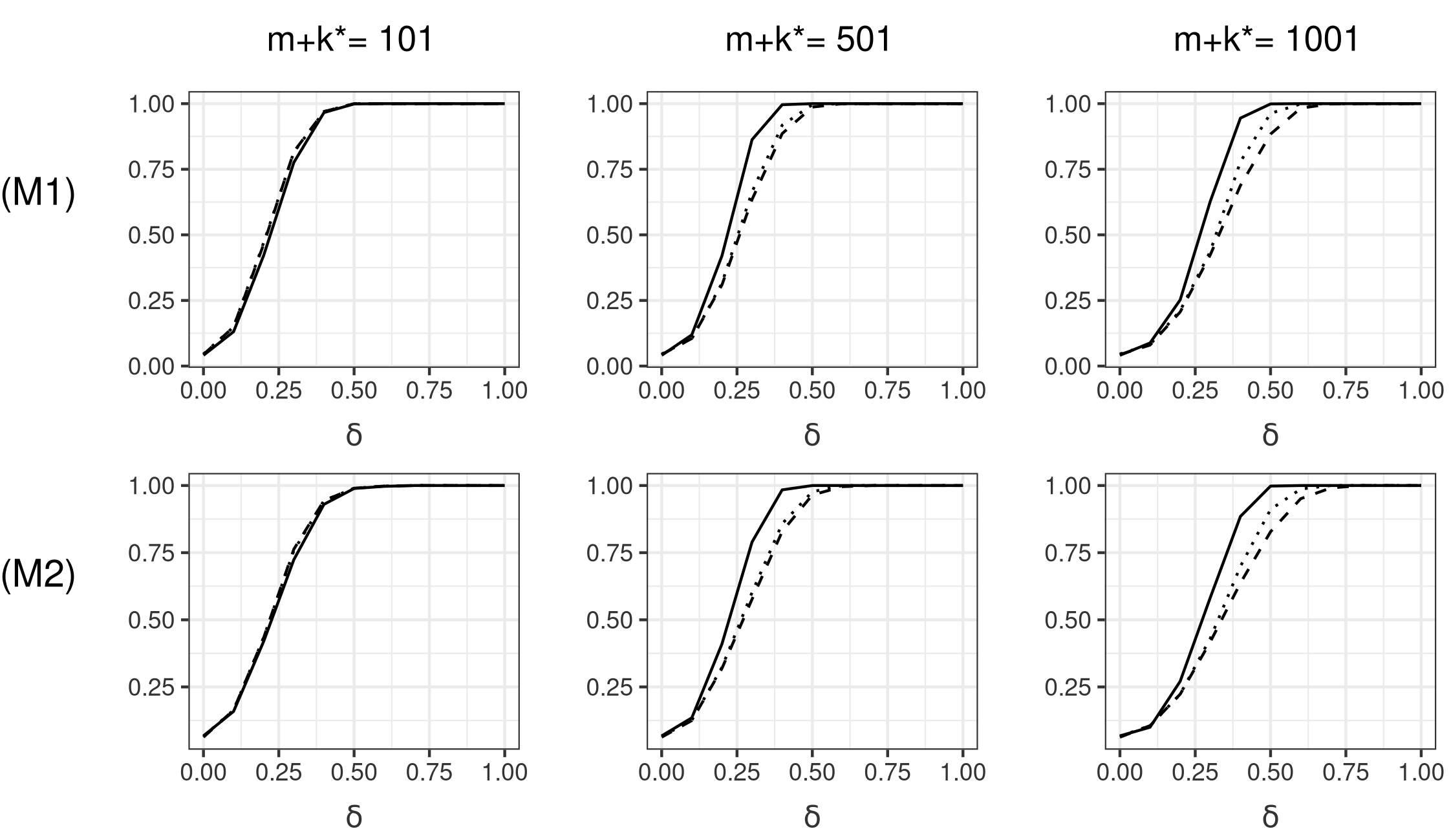} 
\vspace{-0.4cm}
\caption{\it Power of the monitoring procedures for a change in the mean based on the statistics $\hatE$ (solid line), $\hatQ$ (dashed line) and $\hatP$ (dotted line) with $\gamma=0$ and $m=100$ at 5\% nominal size.
\label{fig:OE1}}
\end{figure}

\begin{figure}[H]
\includegraphics[width=15cm,height=8.85cm]{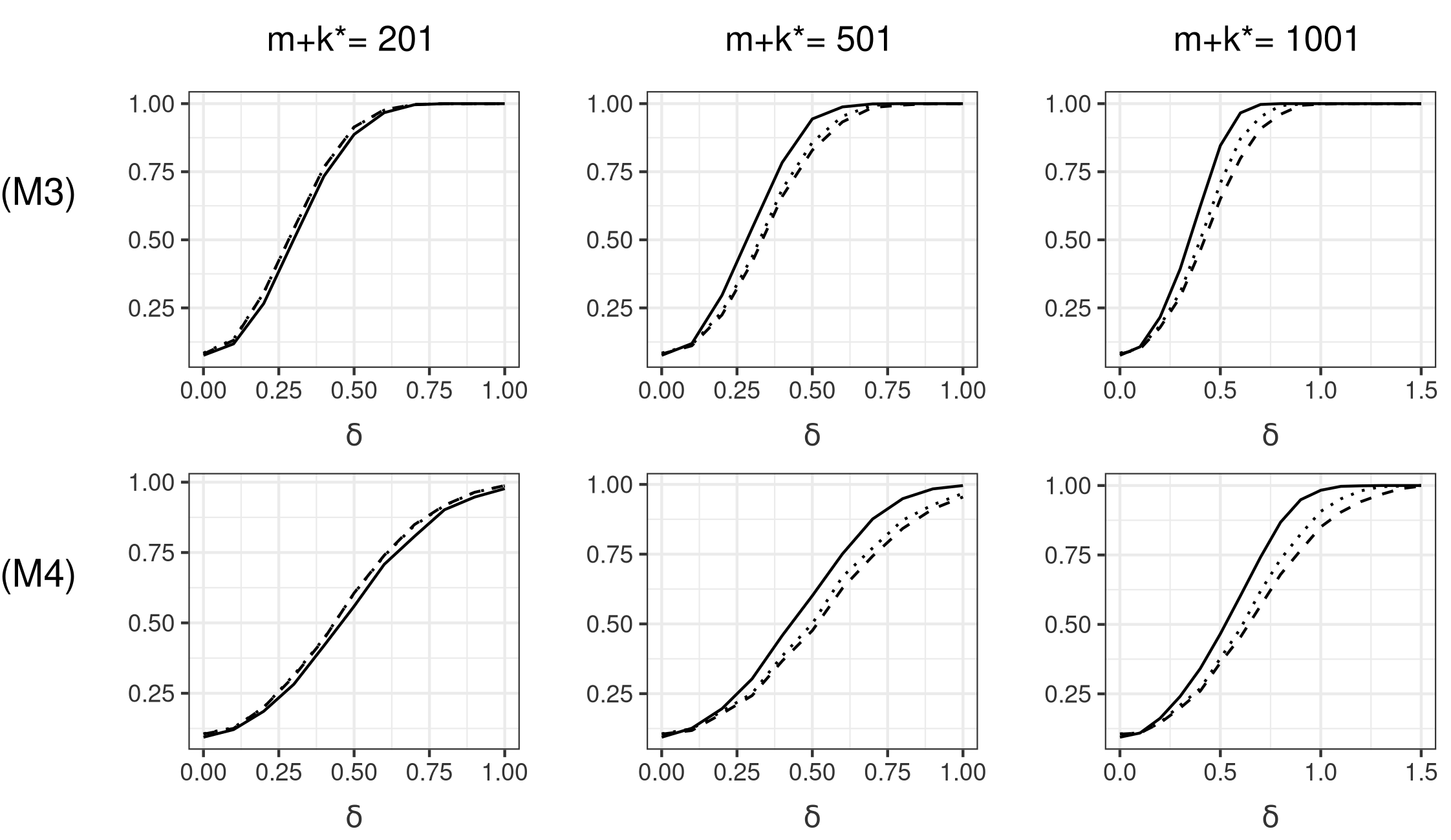} 
\caption{\it Power of the monitoring procedures for a change in the mean based on 
the statistics $\hatE$ (solid line), $\hatQ$ (dashed line) and $\hatP$ (dotted line) with $\gamma=0$ and $m=200$ at 5\% nominal size.
\label{fig:OE2}}
\end{figure}

\subsection{Changes in linear models}\label{sec:simLM}
In this section we present some simulation results for the detection of changes in the linear model \eqref{eq:LM}.
We aim to detect changes in the unknown parameter vector $\beta_t \in \R^p$ by testing the null hypothesis
\begin{align}\label{LMhypo0}
H_0&:\; \beta_1 = \dots = \beta_m = \beta_{m+1} =\beta_{m+2} = \ldots ~,
\end{align}
against the alternative that the parameter $\beta_t$ changes beyond the initial data set, that is
\begin{align}\label{LMhypo1}
 H_1&:\; \exists k^{\star} \in \N:\;\;
\beta_1 = \dots =\beta_{m+k^{\star}-1} \neq \beta_{m+k^{\star}} = \beta_{m+k^{\star}+1} = \ldots \;.
\end{align}
To be precise, we consider the model \eqref{eq:LM} with $p=2$ and the following choice of predictors
\begin{enumerate}[(LM1)]
\item $P_{t} = (1, \sqrt{0.5}Z_t)^\top$~,
\item $P_{t} = (1, 1 + G_t)^\top$ with $G_t = \bar{\sigma}_tZ_t$ and 
$\bar{\sigma}^2_t=0.5+0.2Z_{t-1}+0.3\bar{\sigma}^2_{t-1}$~,
\end{enumerate}
where $Z_t$ denotes an i.i.d. sequence of $\mathcal{N}(0,1)$ random variables in both models.
The parameter vector is fixed at $\beta_t=(1,1)$ under the null hypothesis and to examine the alternative hypothesis, changes are added to its second component, that is 
\begin{align*}
\beta^\delta_t = 
\begin{cases}
(1,1)^\top\;\;&\text{if}\;\; t < m+k^*~,\\
(1,1+\delta)^\top \;\;&\text{if}\;\; t \geq m+k^*~.\\
\end{cases}
\end{align*}
For both scenarios we simulated the residuals $\varepsilon_t$ in model \eqref{eq:LM} as i.i.d. $\mathcal{N}(0,0.5)$ sequences.
Note that the GARCH(1,1) model (LM2) has been already considered by \cite{Fremdt2015}.
As pointed out in Section \ref{sec:linearmodel} the asymptotic variance that needs to be estimated within our procedures is given by
\begin{align}\label{eq:lrvLMsim}
\Gamma
= \sum_{t \in \mathbb{Z}}\Cov\big(P_0Y_0,~P_tY_t\big)~.
\end{align}
We estimate this quantity based on the stable segment of observations $(Y_1,P_1),\dots,(Y_m,P_m)$ using the well known quadratic spectral estimator [see \cite{Andrews1991}] with its implementation in the R-package `sandwich' [see \cite{Zeileis2004}].\\

The problem of detecting changes in the parameter of the linear model has also been addressed using partial sums of the residuals
$
\hat{\varepsilon}_t = Y_t - P^\top_t\hatbeta_I~
$
in statistics similar to \eqref{eq:otherStatistics}, where $\hatbeta_I$ is an initial estimate of $\beta$ computed from the initial stable segment.
We refer for instance to \cite{Chu1996}, \cite{Horvath2004}, who - among many others - use statistics similar to $\hatQ$, or \cite{Fremdt2015}, who uses a statistic similar to $\hatP$.
Our approach directly compares estimators for the vector $\beta_t$, which are derived using the general methodology introduced in Sections \ref{sec2} and \ref{sec3}.
The resulting statistics are  obtained replacing $\hat{\theta}$ by $\hat{\beta}$ in equation \eqref{eq:otherStatistics}.
As pointed out in Remark \ref{rem:WeightedLM}, there is a strong connection between methods comparing direct estimates and methods based on weighted residuals, which was already described by \cite{Huskova2005}.
These authors, in particular, demonstrate that these approaches exhibit power against alternatives, that the \textit{plain} residual-based statistics fail to distinguish from the null hypothesis.
We also refer to \cite{Leisch2000}, \cite{Huskova2005} and \cite{Huskova2007} for a comparison of (plain) residual-based methods with methods using the estimators directly.

In Table \ref{tab:meanLM1} we display the approximation of the nominal level for the three statistics with different values of the parameter $\gamma$ in the weight function, where monitoring was stopped after $1500$ observations.
We observe an acceptable approximation of the nominal level 5\% in the case $\gamma=0$, while the rejection probabilities for $\gamma=0.25$ or $\gamma=0.45$ slightly exceed the desired level of 5\%.
The fact that larger values of $\gamma \in [0,1/2)$ can lead to a worse approximation of the desired type I error has also  been observed by other authors [see, for example,  \cite{Wied2013}] and can be explained by a more sensitive weight function at the monitoring start if $\gamma$ is chosen close to $1/2$.
Overall, the approximation is slightly better for the independent case in model (LM1).

In Figure \ref{fig:LM1} we compare the power with respect to the change amount for different change positions, where we restrict ourselves to the case $\gamma=0$ for the sake of brevity.
The results are very similar to those provided for the mean functional in Section \ref{sec:simMean}.
Again the monitoring scheme based on $\hatE$ outperforms the procedures based on $\hatQ$ and $\hatP$, and the superiority is larger for a later change.
We omit a detailed discussion and summarize that the empirical findings have indicated superiority (w.r.t. testing power) of the monitoring scheme based on the statistic $\hatE$.

\begin{table}[h]
\centering
\begin{tabular}{ccccccc}
 & \multicolumn{3}{c}{(LM1)} & \multicolumn{3}{c}{(LM2)} \\
\hline 
\hline
&&&&&&\\[-1em]
$\gamma$ & $\hatE$ & $\hatQ$ & $\hatP$ &$\hatE$ & $\hatQ$ & $\hatP$ \\ 
\hline
\hline 
$0$    &  6.4\% &  6.5\% &  6.7\% &  7.2\% &  6.7\% &  7.2\% \\
\hline
$0.25$ &  7.6\% &  8.8\% &  9.1\% &  8.5\% &  9.6\% &  9.5\% \\
\hline
$0.45$ & 12.0\% & 12.2\% & 12.1\% & 12.6\% & 12.2\% & 12.6\% \\
\hline
\end{tabular} 
\caption{\it  Type I error for the open-end procedures based on $\hatE$, $\hatQ$ and $\hatP$ at 5\% nominal size.
The size of the known stable data was set to $m=100$.}
\label{tab:meanLM1}
\end{table}

\begin{figure}[h]
\includegraphics[width=15cm,height=8.85cm]{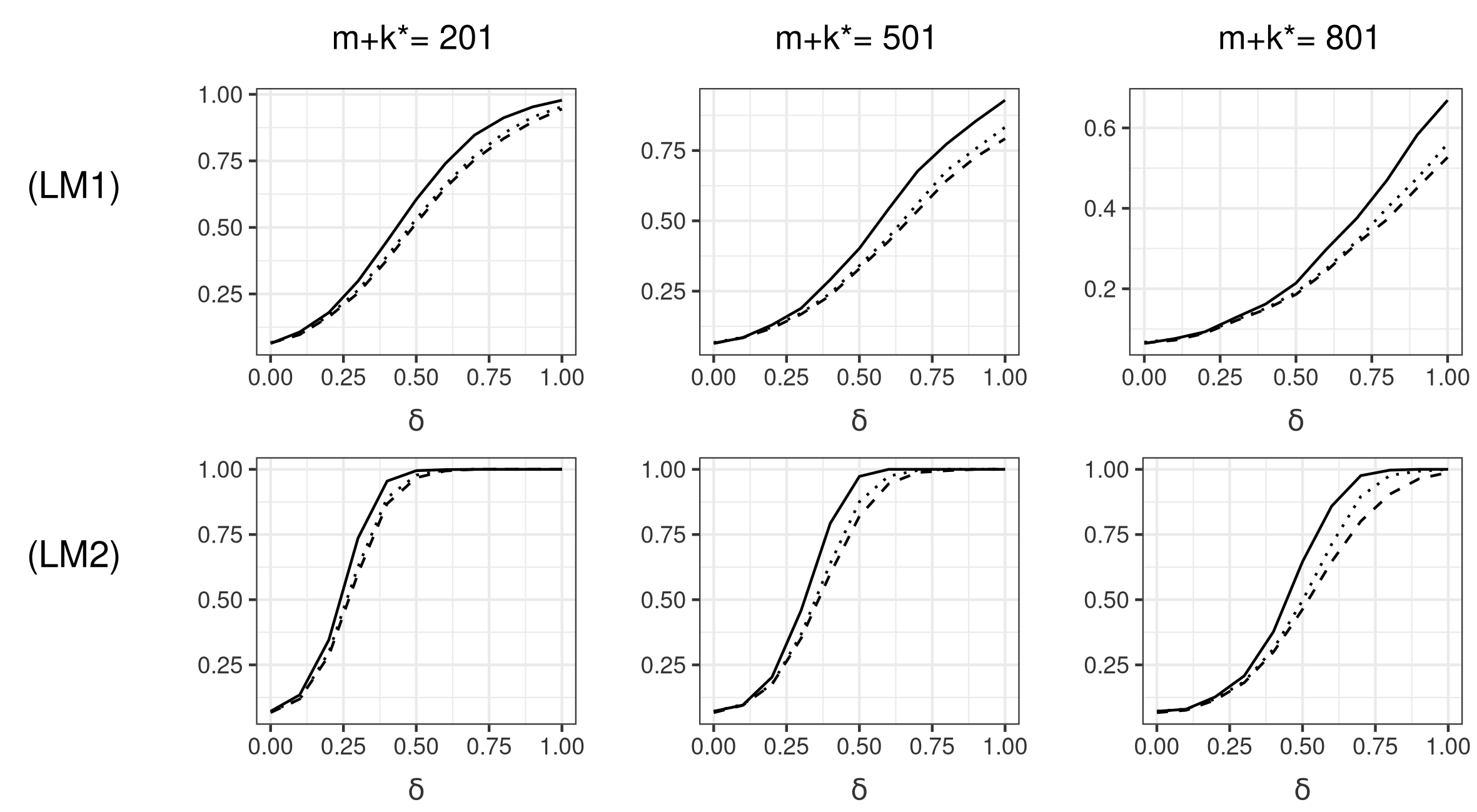} 
\caption{\it Power of the monitoring procedures for a change in the regression parameters for the open-end procedures based on the statistics $\hatE$ (solid line), $\hatQ$ (dashed line) and $\hatP$ (dotted line) with $\gamma=0$ and $m=100$ at 5\% nominal size.
\label{fig:LM1}}
\end{figure}

\section{Two applications}\label{sec:RealData}
In this section, we apply our methodology along side two competitors to monitor for changes in linear models.
We discuss two examples, related to the United Kingdom European Union membership referendum 2016.
We consider the linear model
\begin{equation}\label{eqn:LMempEx}
Y_t = \beta_1 P_{1,t} + \beta_2 P_{2,t} + \varepsilon_t~,
\end{equation}
where $Y_t$ is a real-valued response and $(P_{1,t}, P_{2,t})$ is a two-dimensional predictor, which is a special case of the linear model considered in Section~\ref{sec:linearmodel}.

Recall that our approach requires a stable segment of $m$ observations in which no changes have yet happened.
We choose a stable segment of size $m=20$ for our analysis of the data and monitor with the three detectors $\hat{E}$, $\hat{P}$ and $\hat{Q}$ defined in~\eqref{eq:hatE} and~\eqref{eq:otherStatistics}, respectively.
More precisely, the detectors are updated for every incoming observation, namely $(Y_t, P_{1,t}, P_{2,t})$, and a decision is made, by comparing the detectors with the corresponding thresholds, whether to reject the null hypothesis and stop the procedure or to continue monitoring with the subsequent observation.
Monitoring then continues until a change has been detected by each of the three approaches.

For the next monitoring phase another $m$ observations from the time where the last of the three detectors has rejected are used as the next stable segment.
Monitoring ends once the end of the available data is reached.

In the remaining part of this section, we present the outcomes of the previously described statistical analysis for two data sets related to the United Kingdom (UK) European Union (EU) membership referendum, which took place on 23 June 2016.
For our analysis we chose the significance levels to be $\alpha = 0.05$ and the weight function $w_0$, as defined in~\eqref{eq:threshold}.
All data used was obtained from \url{https://www.ariva.de} on 26 March 2020.

As our first example, we consider the relation of the UK's currency, Pound Sterling (GBP), to the Eurozone's currency, the Euro (EUR), and Switzerland's currency, the Swiss franc (CHF).
More precisely, we consider daily log returns of the exchange rate of GBP to the United States dollar (USD) as a response $Y_t$ of a linear model as described in~\eqref{eqn:LMempEx}.
As predictors we now consider the log returns of EUR to USD ($P_{1,t}$) and CHF to USD ($P_{2,t}$).
A graphical representation of the exchange rates and associated log returns for the period from Januar 2016 to December 2019 can be seen in Figure~\ref{fig:EE2}.
The outcomes of the previously described analysis are presented visually in the graphs.
The first 20 observation (4 Jan 2016 to 29 Jan 2016, note that we only considered trading days FXCM) were used as the stable segment for monitoring.
The monitoring starts on 1 Feb 2016 and went on with all three detectors until 17 Mar 2016 when $\hat{P}$ and $\hat{Q}$ reject, but $\hat{E}$ does not yet reject.
Monitoring continues with $\hat{E}$ only until 29 Mar 2019 when the first phase of monitoring ends as all three monitoring procedures have rejected the null hypothesis.
The monitoring procedure is then restarted with the $20$ observations from the time of rejection (29 Mar 2016 to 25 Apr 2016) as the stable segment and monitoring continues from 26 Apr 2016 until 23 Jun 2016 (day of the UK EU referendum), when $\hat{E}$ and $\hat{P}$ reject.
After these rejections, monitoring continues for one more day, until 24 Jun 2016, when also $\hat{Q}$ rejects.
Finally, the monitoring procedure is restarted with the next 20 observations (24 Jun 2016 to 21 Jul 2016) and monitoring continues until 31 Dec 2019 without rejections by any of the three detectors.
In this example, we see that the three detectors behave quite similar, as each of them rejects twice around the time of the UK EU referendum and no further changes afterwards.
\medskip 

\begin{figure}
\vspace{-1cm}
\begin{center}
\includegraphics[width = \linewidth]{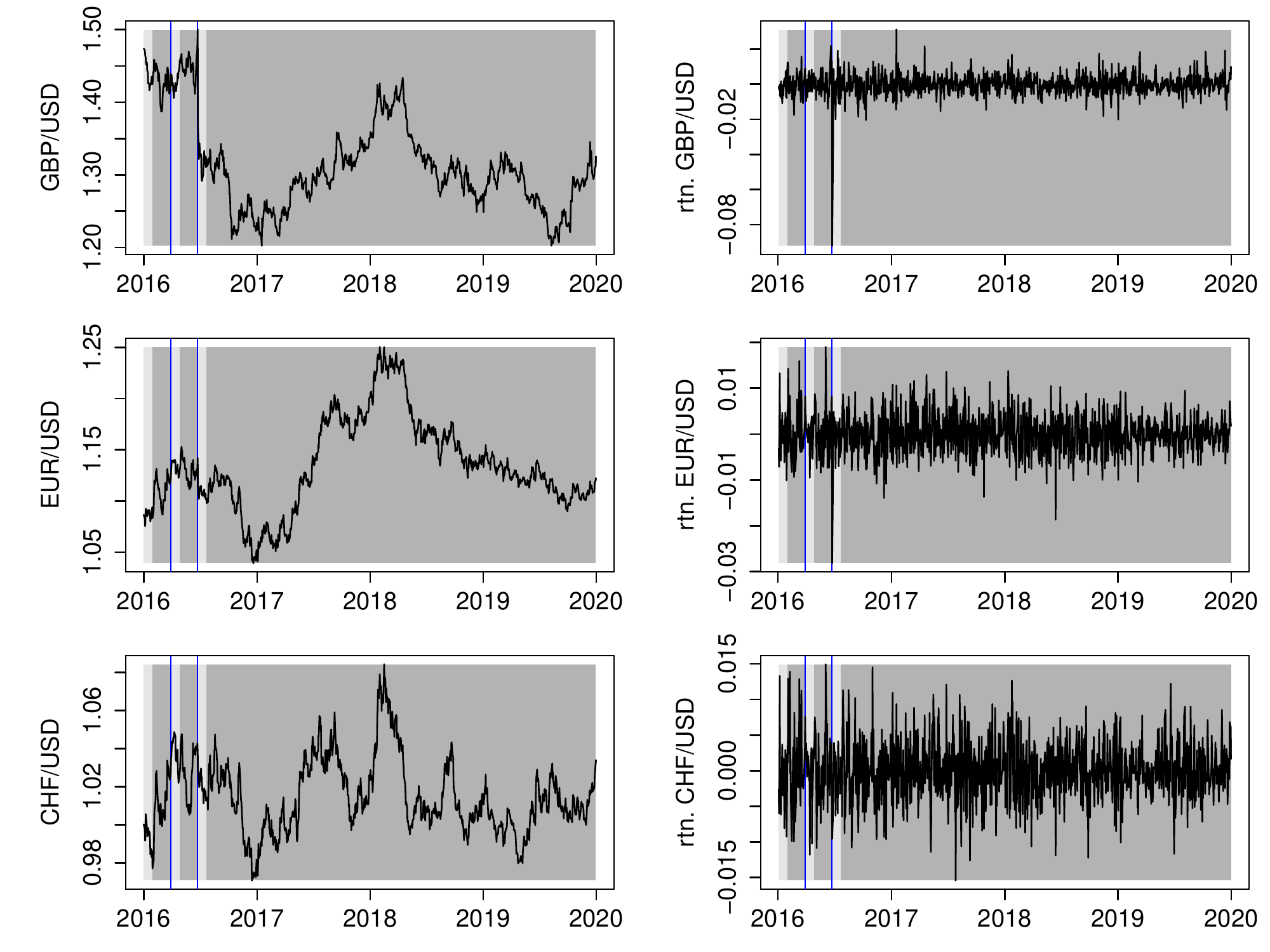}
\end{center}
\caption{\it Results of the analysis detailed in Section~\ref{sec:RealData} applied to log returns of three foreign exchange rates.
Response: GBP/USD (top), predictors: EUR/USD (middle) and CHF/USD (bottom).
Paramters used were $m := 20$, $\alpha = 0.05$ and $\gamma = 0$.
Shaded areas indicate observations that were used as the stable segment (light gray) or monitored for changes with $\hat{E}$ (dark gray).
Vertical blue lines indicate times the sequential procedure $\hat{E}$ rejected.
The times the sequential procedures $\hat{E}$/$\hat{P}$/$\hat{Q}$ stopped were: 29-03-2016/17-03-2016/17-03-2016 and 23-06-2016/23-06-2016/24-06-2016.
\label{fig:EE2}}
\end{figure}

As our second example, we consider the relation of the UK's market to that of the United States (US) and the EU.
More precisely, we consider daily log returns of the FTSE~100, a share index of the 100 companies listed on the London Stock Exchange with the highest market capitalization, as a response $Y_t$ of the linear model described in~\eqref{eqn:LMempEx}.
As predictors we consider the log returns of two similarly constructed indices that are related to the US and EU markets, namely the S\&P~500 ($P_{1,t}$) and the EuroStoxx~50 ($P_{2,t}$).
A graphical representation of the prices and log returns for the period from January 2016 to December 2019 can be seen in Figure~\ref{fig:EE1}.
The outcomes of the previously described analysis are presented visually in the graphs.
The first 20 observations (6 Jan 2016 to 1 Feb 2016) were used as the stable segment for the first phase of monitoring.
The monitoring starts on 2 Feb 2016 and on with all three detectors until 6 Feb 2017 when $\hat{E}$ rejects, but $\hat{P}$ and $\hat{Q}$ do not yet reject.
Monitoring continues with only $\hat{P}$ and $\hat{Q}$ until 16 Mar 2017 when the first phase of monitoring ends with $\hat{P}$ and $\hat{Q}$ also rejecting.
In Figure~\ref{fig:EE1}, the time that was only monitored by $\hat{P}$ and $\hat{Q}$ is not shaded in gray, because $\hat{E}$ has already rejected.

For the second phase of monitoring the procedures are then restarted with the 20 observations from the time of rejection (16 Mar 2016 to 7 Apr 2016) as the stable segment and monitoring continues from 10 Apr 2016 until 18 Apr 2017 when $\hat{P}$ rejects.
Monitoring continues with only $\hat{E}$ and $\hat{Q}$ until 24 Apr 2017 when the second phase of monitoring ends with $\hat{P}$ and $\hat{Q}$ both also rejecting.

For the third phase of monitoring the procedures are then restarted again with the 20 observations from the time of rejection (24 Apr 2017 to 18 May 2017) as the stable segment and monitoring continues from 19 May 2017 until 4 Dec 2018 when $\hat{E}$ rejects.
Monitoring continues with $\hat{P}$ and $\hat{Q}$ only until 14 Aug 2019 when the third phase of monitoring ends with $\hat{P}$ and $\hat{Q}$ both also rejecting.

For the fourth and final phase of monitoring the procedures are then restarted again with the 20 observations from the time of rejection (14 Aug 2019 to 6 Sep 2019) as the stable segment and monitoring continues from 9 Sep 2019 until 11 Oct 2019 when $\hat{E}$ and $\hat{P}$ both reject.
Monitoring continues with $\hat{Q}$ only until 31 Dec 2019, the end of the available data, without a rejection of $\hat{Q}$.
In this example, we see that $\hat{E}$, as expected from the simulations, is capable of detecting changes earlier after a longer period of monitoring.
Only in the second period, where the rejection happens early, this is not the case.

\begin{figure}
\vspace{-1cm}
\begin{center}
\includegraphics[width = \linewidth]{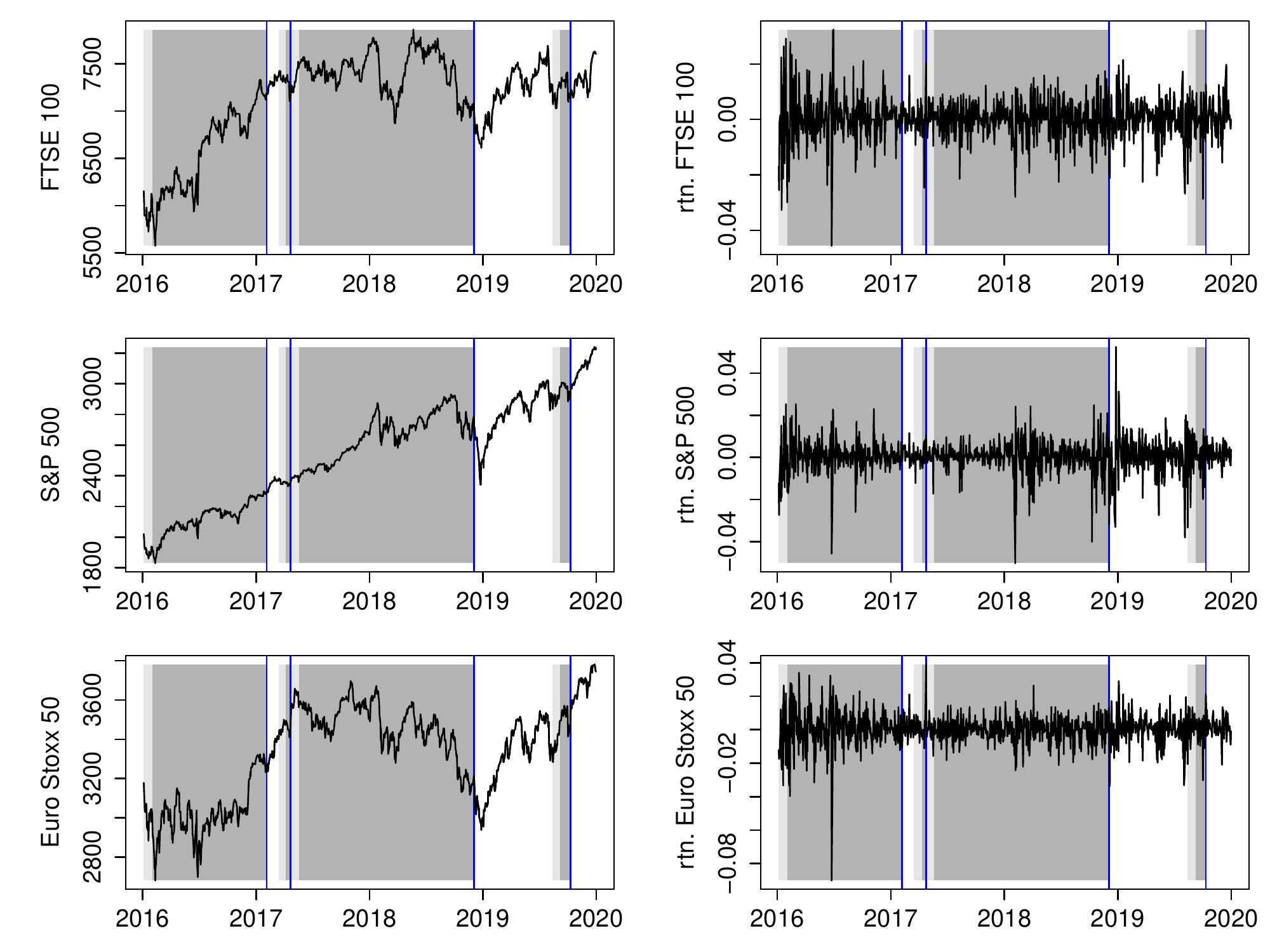}
\end{center}
\caption{\it Results of the analysis detailed in Section~\ref{sec:RealData} applied to log returns of three market indices.
Response: FTSE~100 (top), predictors: S\&P~500 (middle) and EuroStoxx~50 (bottom).
Parameters used were $m := 30$, $\alpha = 0.05$ and $\gamma = 0$.
Shaded areas indicate observations that were used as the stable segment (light gray) or monitored for changes with $\hat{E}$ (dark gray).
Vertical blue lines indicate times the sequential procedure $\hat{E}$ rejected.
The times the sequential procedures $\hat{E}$/$\hat{P}$/$\hat{Q}$ stopped were: 06-02-2016/16-03-2016/16-03-2016, 24-04-2017/18-04-2017/24-04-2017, 04-12-2018/14-08-2019/14-08-2019 and 11-10-2019/11-10-2019/didn't stop.
\label{fig:EE1}}
\end{figure}

\section{Conclusion and outlook}\label{sec:ConAndOut}
In this paper we developed a new monitoring scheme for change point detection  in a parameter of multivariate time series which is applicable in an open-end scenario.
Compared to the commonly used methods we replace the  estimator of the parameter 
from the initial sample $X_{1}, \ldots , X_{m}$ 
by an  estimator  from the sample $X_{1}, \ldots , X_{m+j}$. We then compare this estimator  with the estimator from the sample $X_{m+j+ 1}, \ldots , X_{m+k}$
for every $j=0,\ldots , k-1$,
For the new statistic the asymptotic distribution under the null hypothesis and the consistency of a corresponding test, which controls the type I error, are  established.
By considering a common class of weight functions $w_\gamma$ defined in \eqref{eq:threshold} the limit reduces to an elementary distribution, for which quantiles can be obtained by straightforward Monte Carlo simulations.
Finally, we demonstrate by a comprehensive simulation study that the new monitoring scheme is superior (in terms of testing power) to a benchmark consisting of common methods proposed in the literature.
The new statistic can also be used in closed-end scenarios, for which the same superiority in power is observed.\\
For a future research project it is of interest to replace Assumption \ref{assump:approx} by
an FCLT for any fixed time horizon and H\'{a}y\'{e}k-R\'{e}yni-inequalities, as done for instance in \cite{Kirch2018} and \cite{Kirch2019}.
Since these conditions are slightly weaker, it would be a benefit to establish the results at hand under those conditions.

Another issue - particularly with regard to our simulation study in Section \ref{sec4} - is that the test level approximation depends sensitively on the efficient estimation of the long-run variance.
The standard approach in our field, which we also followed, is to employ only the initial set for this estimate.
As the performance of this is poorly for stronger dependent models, it is logical to take a permanently updated estimate into consideration, which fits to the basic message of this work to enhance initial estimates during monitoring.
Moreover, one could tackle this problem developing a concept of self-normalization [see \cite{Shao2010}], which is applicable in an open-end scenario. 
However, as the discussion of both ideas is technically involved, it is beyond the scope of this paper and left as a promising subject for future research.

Finally, it is a logical next step to also characterize the asymptotic distribution for the stopping times based on the statistic $\hatE$ defined in \eqref{eq:hatE}.
Corresponding results are already known for the methods based on $\hatQ$ and $\hatP$, see \cite{Aue2004} and \cite{Fremdt2014}, respectively.
\medskip

\noindent \textbf{Acknowledgments}
This work has been supported in part by the Collaborative Research Center ``Statistical modeling of nonlinear dynamic processes'' (SFB 823, Teilprojekt A1, C1) and the Research Training Group 'High-dimensional phenomena in probability - fluctuations and discontinuity' (RTG 2131) of the German Research Foundation (DFG).
Moreover, the authors would like to thank Christina St\"ohr and Herold Dehling for extremely helpful discussions.
We are also grateful to the two unknown referees for their constructive comments on an earlier version of the paper and to Ivan Kojadinovic for pointing out some errors.

\setlength{\bibsep}{1pt}
\begin{small}
\bibliography{literature}
The data that support the findings of this study were obtained from the domain: \url{https://www.ariva.de}~.
\end{small}

\newpage

\appendix
\title{{Online appendix to: A new approach for open-end sequential change point monitoring}}
\setcounter{page}{1}
{
\author{
  {\small Josua G\"osmann}\\
  {\small Ruhr-Universit\"at Bochum }\\
  {\small Fakult\"at f\"ur Mathematik}\\
  {\small 44780 Bochum, Germany} \\
  {\small \href{mailto:josua.goesmann@ruhr-uni-bochum.de}{josua.goesmann@ruhr-uni-bochum.de}}\\
  {\small (corresponding author)}
  \and
  {\small Tobias Kley}\\
  {\small University of Bristol}\\
  {\small School of Mathematics}\\
  {\small Bristol BS8 1UG, United Kingdom}\\
  {\small \href{mailto:tobias.kley@bristol.ac.uk}{tobias.kley@bristol.ac.uk}}
\and
  {\small Holger Dette}\\
  {\small Ruhr-Universit\"at Bochum }\\
  {\small Fakult\"at f\"ur Mathematik}\\
  {\small 44780 Bochum, Germany} \\
  {\small \href{mailto:holger.dette@ruhr-uni-bochum.de}{holger.dette@ruhr-uni-bochum.de}}
}
\maketitle

\section{Proofs of the results in  Section \refb{sec2}}

\label{app:technical}

\noindent
\begin{proof}[\textbf{Proof of Theorem \refb{thm:mainH0}:}]

In the proof we use the following extra notation.
Define the statistic
\begin{align}\label{eq:E}
E_m(k) = m^{-1/2}\max_{j=0}^{k-1} (k - j) \Big\Vert \hattheta_{1}^{m+j} - \hattheta_{m+j+1}^{m+k} \Big\Vert_{\Sigma^{-1}}~,
\end{align}
where we have replaced the long-run variance estimator $\hatSigma$ by the (unknown) true long-run variance $\Sigma$ in definition \eqrefb{eq:hatE}.
Further define
\begin{align}\label{eq:tildeE}
\tilde{E}_m(k) = m^{-1/2} \max_{j=0}^{k-1} (k-j) \bigg\Vert \dfrac{1}{m+j}\sum_{t=1}^{m+j}\IF_t - \dfrac{1}{k-j}\sum_{t=m+j+1}^{m+k} \IF_t \bigg\Vert_{\Sigma^{-1}}~,
\end{align}
where we have replaced the estimator $\hattheta_{i}^{j}$ by corresponding averages of the influence function in \eqref{eq:E}.
Throughout the proof we will frequently use that due to continuity of $\tilde{w}$ and $\limsup_{t\to \infty} t\tilde{w}(t) < \infty$ in Assumption \refb{assump:weighting}, the weight function $w$ has a uniform upper bound, say $u_w$.
Finally, the triple $(\Omega, \mathcal{A}, \Pb)$ will denote the underlying probability space.\\

The proof itself is now split into several Lemmas \ref{lem:RemoveRemainders} - \ref{lem:simplylimit}.
The first Lemma shows that $\tilde{E}_m(k)$ and $E_m(k)$ are (asymptotically) equivalent.
Lemma \ref{lem:approx} will approximate $E_m(k)$ by Brownian motions, while Lemma \ref{lem:ObtLimit} then yields a limit for this approximation.
Lemma \ref{lem:plugcovar} finishes the proof by plugging in the covariance estimator, meaning that $\hat{E}_m(k)$ and $E_m(k)$ are asymptotically equivalent.
Finally, Lemma \ref{lem:simplylimit} will establish the other representation of the limit distribution in \eqrefb{eq:ThmMainH01}.
In each Lemma, we suppose that the assumptions of Theorem \refb{thm:mainH0} are valid.

\begin{lemma}[Remove Remainders]\label{lem:RemoveRemainders}
It holds that
\begin{align*}
\supkinf w(k/m) \Big| E_m(k) - \tilde{E}_m(k) \Big| = \op(1)
\end{align*}
as $m \to \infty$.
\end{lemma}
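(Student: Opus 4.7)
The plan is to exploit the linearization \eqref{eq:meta-remainder} to reduce the difference $E_m(k) - \tilde E_m(k)$ to a contribution involving only the remainder terms $R_{i,j}$, and then to control this contribution uniformly by combining Assumption \ref{assump:remainder} (in the strengthened form stated in Remark \ref{rem:assump}(iv)) with the decay properties of $w$ imposed by Assumption \ref{assump:weighting}.

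First I would use \eqref{eq:meta-remainder} to write
\[
\hattheta_1^{m+j} - \hattheta_{m+j+1}^{m+k}
= \dfrac{1}{m+j}\sum_{t=1}^{m+j}\IF_t - \dfrac{1}{k-j}\sum_{t=m+j+1}^{m+k}\IF_t
+ R_{1,m+j} - R_{m+j+1,m+k}.
\]
Applying the reverse triangle inequality inside the weighted norm, and using that $\|v\|_{\Sigma^{-1}} \leq C|v|$ for some constant $C$ because $\Sigma$ is fixed and non-singular, I obtain the pointwise bound
\[
\big| E_m(k) - \tilde E_m(k) \big|
\leq C m^{-1/2}\max_{j=0}^{k-1}(k-j)\big(|R_{1,m+j}| + |R_{m+j+1,m+k}|\big).
\]

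Next I would invoke Remark~\ref{rem:assump}(iv) to introduce the quantity
\[
\delta_m := \sup_{n\geq m}\max_{1\leq i<j\leq n}\dfrac{j-i+1}{\sqrt{n}}|R_{i,j}|,
\]
which satisfies $\delta_m = o(1)$ almost surely as $m\to\infty$. Specializing this bound gives
$(m+j)|R_{1,m+j}|\leq \sqrt{m+j}\,\delta_m$ and $(k-j)|R_{m+j+1,m+k}|\leq \sqrt{m+k}\,\delta_m$. Substituting, using $k-j\leq k$ and $m+j\geq m$, the quantity to control becomes at most
\[
C\,w(k/m)\,\delta_m\Big[\tfrac{k}{m} + \sqrt{1+k/m}\Big].
\]

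Finally I would verify, writing $t=k/m$, that both $t\mapsto tw(t)$ and $t\mapsto w(t)\sqrt{1+t}$ are bounded on $\R_{\geq 0}$. The first is bounded by Assumption \ref{assump:weighting}(1) together with continuity of $\tildew$ (and the cutoffs in $w$); for the second, the same assumption yields $w(t)\sqrt{1+t}\sim c/\sqrt{t}\to 0$ as $t\to\infty$, so boundedness follows. Consequently $\sup_{k\geq 1} w(k/m)|E_m(k)-\tilde E_m(k)| = O(\delta_m) = o(1)$ almost surely, which implies convergence in probability as claimed. The main technical delicacy, and the step I would check most carefully, is the second term: a naive bound gives $w(t)(1+t)^{3/2}$ which is unbounded, so one must exploit that the remainder $R_{m+j+1,m+k}$ lives on an interval of length $k-j$ (rather than $m+k$) to avoid losing a factor of $\sqrt{1+t}$, and this is precisely what the uniform form of Assumption \ref{assump:remainder} provides.
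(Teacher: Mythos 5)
Your proof is correct and follows essentially the same route as the paper's: the same linearization and reverse triangle inequality, followed by splitting into the $R_{1,m+j}$ and $R_{m+j+1,m+k}$ contributions, which are controlled by the uniform negligibility in Assumption \ref{assump:remainder} together with the boundedness of $t\mapsto (1+t)w(t)$ (and hence of $\sqrt{1+t}\,w(t)$). Your explicit introduction of $\delta_m$ and the closing observation about normalizing $R_{m+j+1,m+k}$ by the block length $k-j$ rather than $m+k$ are exactly the estimates \eqrefb{ineq:remain1}--\eqrefb{eq:threshold-remainder3} in the paper, just packaged slightly differently.
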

\begin{proof}
By the (reverse) triangle inequality and the linearization in \eqrefb{eq:meta-remainder} we obtain
\begin{align*}
&\;\Big| E_m(k) - \tilde{E}_m(k) \Big|\\
&\leq m^{-1/2} \max_{j=0}^{k-1} (k - j)\Big\Vert \hattheta_{1}^{m+j} - \hattheta_{m+j+1}^{m+k} - \dfrac{1}{m+j}\sum_{t=1}^{m+j}\IF_t + \dfrac{1}{k-j}\sum_{t=m+j+1}^{m+k} \IF_t \Big\Vert_{\Sigma^{-1}}\\
&= m^{-1/2}\max_{j=0}^{k-1} (k-j) \Big\Vert R_{1,m+j} - R_{m+j+1,m+k} \Big\Vert_{\Sigma^{-1}}~,
\end{align*}
where we used that $\theta_t$ is constant for the last equality.
Next, we obtain that
\begin{align}\label{ineq:remain1}
\begin{split}
&\supkinf \dfrac{w(k/m)}{m^{1/2}} \max_{j=0}^{k-1} (k-j) \Big\Vert R_{m+j+1,m+k} \Big\Vert_{\Sigma^{-1}}\\
= &\supkinf \dfrac{w(k/m)(m+k)^{1/2}}{m^{1/2}} \max_{j=0}^{k-1} \dfrac{k-j}{(m+k)^{1/2}} \Big\Vert R_{m+j+1,m+k} \Big\Vert_{\Sigma^{-1}}\\
\leq &\supkinf \dfrac{w(k/m)(m+k)^{1/2}}{m^{1/2}} \supkinf \max_{1 \leq i < j \leq m+k} \dfrac{j-i+1}{(m+k)^{1/2}} \Big\Vert R_{i,j} \Big\Vert_{\Sigma^{-1}}~.
\end{split}
\end{align}
Similar as in \eqref{ineq:remain1} it holds
\begin{align}\label{ineq:remain2}
\begin{split}
&\supkinf \dfrac{w(k/m)}{m^{1/2}} \max_{j=0}^{k-1} (k-j) \Big\Vert R_{1,m+j} \Big\Vert_{\Sigma^{-1}}\\
\leq &\supkinf \dfrac{w(k/m)(m+k)}{m} \sqrt{m}\supkinf \max_{j=0}^{k-1} \Big\Vert R_{1,m+j} \Big\Vert_{\Sigma^{-1}}\\
= &\supkinf \dfrac{w(k/m)(m+k)}{m} \sup_{k=0}^{\infty} \sqrt{m} \Big\Vert R_{1,m+k} \Big\Vert_{\Sigma^{-1}}
\end{split}
\end{align}
Using Assumption \refb{assump:weighting} for the weight function $w$ we obtain
\begin{align}\label{eq:threshold-remainder}
\supkinf \dfrac{w(k/m)(m+k)}{m} 
= \supkinf w(k/m)\Big(1+\dfrac{k}{m}\Big)
\leq \sup_{t > 0} w(t)(1+t)< \infty~.
\end{align}
and by similar arguments it holds that
\begin{align}\label{eq:threshold-remainder2}
\supkinf \dfrac{(m+k)^{1/2}w(k/m)}{\sqrt{m}} < \infty~. 
\end{align}
Further note that due to Assumption \refb{assump:remainder} with probability one
\begin{align}\label{eq:threshold-remainder3}
\begin{split}
\supkinf \sqrt{m} \Big| R_{1,m+k} \Big|
\leq \supkinf \sqrt{m+k} &\Big| R_{1,m+k} \Big|
= \sup_{k=m}^\infty \sqrt{k} \Big| R_{1,k} \Big|\\
&\overset{j=k,i=1}{\leq} \sup_{k=m}^\infty \max_{1\leq i < j \leq k}\dfrac{j-i+1}{\sqrt{k}} \Big| R_{i,j} \Big|
= o(1)~.
\end{split}
\end{align}
Now combining \eqref{eq:threshold-remainder}, \eqref{eq:threshold-remainder2} and \eqref{eq:threshold-remainder3} the bounds derived in \eqref{ineq:remain1} and \eqref{ineq:remain2} are of order $\op(1)$, which finishes the proof of Lemma \ref{lem:RemoveRemainders}.
\end{proof}

\noindent For the proof of the next Lemma we can proceed (roughly) similar to the proof of Lemma 5.2 in \citesuppl{Fremdt2015}.

\begin{lemma}[Approximation with Brownian motions]\label{lem:approx}
Define 
\begin{align*}
P_m(k)
= \dfrac{1}{\sqrt{m}} \max_{j=0}^{k-1}
\Big| W_{m,1}(k) - \dfrac{m+k}{m+j} W_{m,1}(j) + \dfrac{k-j}{m+j}W_{m,2}(m)\Big|~,
\end{align*}
then 
\begin{align*}
\supkinf w(k/m) \Big| \tilde{E}_m(k) - P_m(k) \Big|
= \op(1)
\end{align*}
as $m \to \infty$.
\end{lemma}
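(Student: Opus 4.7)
The plan is to invoke Assumption \refb{assump:approx} to replace the partial sums of influence functions inside $\tilde{E}_m(k)$ by the Brownian motions $W_{m,1}$, $W_{m,2}$ (which yields precisely the expression $P_m(k)$) and then to control the resulting remainder uniformly in $k$ via the tail decay of $w$ imposed in Assumption \refb{assump:weighting}. To this end, introduce the approximation errors
\begin{align*}
a_m(k) := \sum_{t=m+1}^{m+k}\IF_t - \sqrt{\Sigma}\,W_{m,1}(k)~, \qquad b_m := \sum_{t=1}^m \IF_t - \sqrt{\Sigma}\,W_{m,2}(m)~,
\end{align*}
with the convention $a_m(0)=0$, which by Assumption \refb{assump:approx} satisfy $\supkinf |a_m(k)|/k^\xi = \Op(1)$ and $|b_m| = \Op(m^\xi)$. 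Using $\sum_{t=1}^{m+j}\IF_t = b_m + a_m(j) + \sqrt{\Sigma}[W_{m,2}(m)+W_{m,1}(j)]$ together with $\sum_{t=m+j+1}^{m+k}\IF_t = a_m(k) - a_m(j) + \sqrt{\Sigma}[W_{m,1}(k)-W_{m,1}(j)]$, the vector inside the norm of $\tilde{E}_m(k)$, after multiplication by $(k-j)$, decomposes as a Brownian motion part (whose $\Sigma^{-1}$-norm collapses to the Euclidean norm featuring in $P_m(k)$ via the identity $\|\sqrt{\Sigma}v\|_{\Sigma^{-1}} = |v|$, up to a harmless sign flip of $W_{m,2}$) plus the remainder
\begin{align*}
\epsilon_m(j,k) := \dfrac{k-j}{m+j}\big(b_m + a_m(j)\big) - \big(a_m(k)-a_m(j)\big)~.
\end{align*}

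Applying the reverse triangle inequality for $\|\cdot\|_{\Sigma^{-1}}$ then yields
\begin{align*}
\big|\tilde{E}_m(k) - P_m(k)\big| \leq \dfrac{1}{\sqrt{m}}\maxjk \|\epsilon_m(j,k)\|_{\Sigma^{-1}}~,
\end{align*}
so it remains to show that $w(k/m)$ times this upper bound is $\op(1)$ uniformly in $k \geq 1$. From Assumption \refb{assump:weighting} I would derive three tail estimates used repeatedly: $\sup_{t \geq 0} tw(t) < \infty$, $\sup_{t \geq 0}(1+t)w(t) < \infty$, and $\sup_{t \geq 0} t^\xi w(t) < \infty$. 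Splitting $\epsilon_m(j,k)$ into pieces, one then handles: (i) the $\tfrac{k-j}{m+j}|b_m|$ contribution via $\tfrac{k-j}{m+j} \leq \tfrac{k}{m}$ and the first bound, giving order $\Op(m^{\xi-1/2})$; (ii) the $|a_m(k)|,\, |a_m(j)|$ contributions of order $\Op(k^\xi/\sqrt{m})$, which combined with the third bound again give $\Op(m^{\xi-1/2})$; (iii) the delicate term $\tfrac{k-j}{m+j}|a_m(j)|$, for which the crude estimate $\tfrac{k-j}{m+j} \leq \tfrac{k}{m}$ is insufficient, so I would optimize over $j$ to obtain $\max_j (k-j)j^\xi/(m+j) \leq C(m+k)m^{\xi-1}$ by splitting into the regimes $j \leq m$ and $j > m$ and noting that $j \mapsto j^\xi/(m+j)$ peaks at $j = \xi m/(1-\xi)$; combined with the second bound this again gives $\Op(m^{\xi-1/2})$. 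Since $\xi < 1/2$, every piece is $\op(1)$ uniformly in $k$.

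The main obstacle is precisely the uniform-in-$k$ control of type (iii): for large $k$ the naive estimate loses a factor $k^\xi/\sqrt{m}$ that cannot be absorbed into $w(k/m)$, and one truly relies on the sharper combinatorial bound above together with the decay $w(k/m) \lesssim m/k$ coming from $\limsup_{t\to\infty} tw(t) < \infty$. This is also the structural reason Assumption \refb{assump:approx} is posed as a uniform strong approximation (with rate $k^\xi$) rather than as a mere FCLT — the latter would be sufficient in a closed-end setting where $k$ is bounded by $mT$, but not here.
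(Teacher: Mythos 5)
Your proposal is correct and follows essentially the same route as the paper's proof: a reverse triangle inequality reduces the claim to the strong-approximation errors from Assumption \ref{assump:approx}, and each piece is controlled by the same tail bounds on $w$ (including the key estimate $m+j\ge j^{\xi}m^{1-\xi}$, which is exactly how the paper handles your term (iii)). The sign flip of $W_{m,2}$ you flag is present in the paper's own decomposition as well and is immaterial, since only the distribution of the limit is used afterwards.
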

\begin{proof}
For the remainder of the proof let $W_{m,i}^{\Sigma}:= \sqrt{\Sigma}W_{m,i}$ for $i=1,2$ and note that this implies
\begin{align*}
P_m(k) 
= \dfrac{1}{\sqrt{m}} \max_{j=0}^{k-1} 
\Big\Vert \WmSig(k) - \dfrac{m+k}{m+j} \WmSig(j) + \dfrac{k-j}{m+j}\WmSigII(m)\Big\Vert_{\Sigma^{-1}}~.
\end{align*}
The last display and the (reverse) triangle inequality then yield
\begin{align}\label{ineq:approxsep}
\begin{split}
&\;\;\Big| \tilde{E}_m(k) - P_m(k) \Big| \\
&\leq \dfrac{1}{\sqrt{m}}\max_{j=0}^{k-1} \Big\Vert \dfrac{k-j}{m+j} \sum_{t=1}^{m+j}\IF_t - \sum_{t=m+j+1}^{m+k} \IF_t\\
&\qquad +\WmSig(k) - \dfrac{m+k}{m+j} \WmSig(j) + \dfrac{k-j}{m+j}\WmSigII(m) \Big\Vert_{\Sigma^{-1}}\\
&\leq \dfrac{1}{\sqrt{m}} \Big\Vert \sum_{t=m+1}^{m+k} \IF_t - \WmSig(k)  \Big\Vert_{\Sigma^{-1}}
+ \dfrac{1}{\sqrt{m}}\max_{j=0}^{k-1} \dfrac{m+k}{m+j}\Big\Vert \sum_{t=m+1}^{m+j} \IF_t - \WmSig(j) \Big\Vert_{\Sigma^{-1}}\\
&\qquad + \dfrac{1}{\sqrt{m}}\max_{j=0}^{k-1}\dfrac{k-j}{m+j} \Big\Vert \sum_{t=1}^{m}\IF_t - \WmSigII(m) \Big\Vert_{\Sigma^{-1}}~.
\end{split}
\end{align}
We will treat the three summands of the last display separately.
Using the definition of the operator norm, we derive the following bound for the first summand
\begin{align*}\
&\supkinf \dfrac{w(k/m)}{\sqrt{m}} \Big\Vert \sum_{t=m+1}^{m+k}\IF_t - \WmSig(k) \Big\Vert_{\Sigma^{-1}}\\
\leq &\supkinf \dfrac{w(k/m)k^\xi}{\sqrt{m}} 
\supkinf \dfrac{1}{k^\xi}\Big\Vert \sum_{t=m+1}^{m+k}\IF_t - \WmSig(k) \Big\Vert_{\Sigma^{-1}}
\end{align*}
By Assumption \refb{assump:approx} and the estimate $\Vert x \Vert_{\Sigma^{-1}} \leq \Vert \Sigma^{-1} \Vert_{op}^{1/2}|x|$ for all $x \in \R^p$, the second factor is of order $\Op(1)$.
Since $w$ has an upper bound $u_w$ we obtain for the first factor, that
\begin{align*}
\sup_{k=1}^{m} &\dfrac{w(k/m)k^\xi}{\sqrt{m}} 
\leq \sup_{k=1}^{m} \dfrac{u_wk^\xi}{\sqrt{m}} 
= u_w m^{\xi-1/2} = o(1)~,\\
\sup_{k=m+1}^{\infty} &\dfrac{w(k/m) k^\xi}{\sqrt{m}} 
\leq \sup_{k=m+1}^{\infty} \dfrac{1}{k^{1/2-\xi}}
\supkinf \dfrac{w(k/m)k^{1/2}}{\sqrt{m}}
\leq \dfrac{1}{m^{1/2-\xi}}\sup_{t> 0} \sqrt{t}w(t) = o(1)~.
\end{align*}
Next, we can bound the second summand on the right-hand side in \eqref{ineq:approxsep} by
\begin{align*}
&\supkinf \dfrac{w(k/m)}{\sqrt{m}} \max_{j=0}^{k-1} \dfrac{m+k}{m+j}\Big\Vert \sum_{t=m+1}^{m+j} \IF_t - \WmSig(j) \Big\Vert_{\Sigma^{-1}}\\
\leq &\supkinf \dfrac{w(k/m)(m+k)}{\sqrt{m}} \max_{j=1}^{k-1} \dfrac{1}{j^\xi m^{1-\xi}}\Big\Vert \sum_{t=m+1}^{m+j} \IF_t - \WmSig(j) \Big\Vert_{\Sigma^{-1}}\\
\leq &\supkinf \dfrac{w(k/m)(m+k)m^{\xi-1}}{\sqrt{m}} \supkinf\dfrac{1}{k^\xi}\Big\Vert \sum_{t=m+1}^{m+k} \IF_t - \WmSig(k) \Big\Vert_{\Sigma^{-1}}~,
\end{align*}
where we used that $(m+j) = (m+j)^\xi(m+j)^{1-\xi} \geq j^\xi m^{1-\xi}$.
Using again Assumption \refb{assump:approx}, the second factor in the last display is of order $\Op(1)$.
Moreover, following the idea of the proof of Lemma 3 in \citesuppl{Aue2006} it holds that
\begin{align}\label{eq:thsholdmk}
\begin{split}
\sup_{k=1}^m \dfrac{w(k/m)(m+k)m^{\xi-1}}{\sqrt{m}}
&\leq u_w\sup_{k=1}^m (m+k)m^{\xi-3/2}
= u_w 2m^{\xi-1/2}
= o(1)~,\\
\sup_{k=m+1}^\infty \dfrac{w(k/m)(m+k)m^{\xi-1}}{\sqrt{m}}
&= m^{\xi-1/2}\sup_{k=m+1}^\infty \big(1+k/m\big)w(k/m)\\
&\hspace{2cm}\leq m^{\xi-1/2}\sup_{t>1} (1+t)w(t) = o(1)
\end{split}
\end{align}
and it remains to treat the third summand of the right-hand side in \eqref{ineq:approxsep}, which can be bounded by
\begin{align*}
&\supkinf \dfrac{w(k/m)}{\sqrt{m}}\max_{j=0}^{k-1}\dfrac{k-j}{m+j} \Big\Vert \sum_{t=1}^{m}\IF_t - \WmSigII(m) \Big\Vert_{\Sigma^{-1}}\\
\leq &\supkinf \dfrac{w(k/m)}{\sqrt{m}} \dfrac{k}{m} \Big\Vert \sum_{t=1}^{m}\IF_t - \WmSigII(m) \Big\Vert_{\Sigma^{-1}}\\
\leq &\supkinf \dfrac{w(k/m)km^{\xi-1}}{\sqrt{m}} \dfrac{1}{m^\xi} \Big\Vert \sum_{t=1}^{m}\IF_t - \WmSigII(m) \Big\Vert_{\Sigma^{-1}}~.
\end{align*}
Using Assumption \refb{assump:approx} and the arguments in \eqref{eq:thsholdmk} this term is of order $\op(1)$, which finishes the proof of Lemma \ref{lem:approx}.
\end{proof}

\begin{lemma}[Obtain limit process]\label{lem:ObtLimit}
The following weak convergence holds
\begin{align*}
\supkinf w(k/m) P_m(k)
\convd \sup_{0 \leq t < \infty} \max_{0 \leq s \leq t} 
w(t) \bigg| W_1(t)- \dfrac{1+t}{1+s}W_1(s) + \dfrac{t-s}{1+s} W_2(1)  \bigg|
\end{align*}
as $m \to \infty$, where $W_1$ and $W_2$ denote independent, $p$-dimensional standard Brownian motions.
\end{lemma}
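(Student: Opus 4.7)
The plan is to exploit Brownian self-similarity to recast $P_m(k)$ as a functional of two fixed standard Brownian motions $W_1,W_2$ evaluated on a rescaled grid, and then let $m\to\infty$ by combining a.s.\ continuity of the underlying process with density of the grid $m^{-1}\N$ in $[0,\infty)$.

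\emph{Rescaling step.} For each fixed $m$ the scaling property of Brownian motion gives $W_{m,1}(\cdot)\eqd \sqrt m\,W_1(\cdot/m)$ and, jointly and independently, $W_{m,2}(m)\eqd \sqrt m\, W_2(1)$, where $W_1,W_2$ are independent standard $p$-dimensional Brownian motions. Inserting these identities into the definition of $P_m$, the prefactor $1/\sqrt m$ cancels, and writing $t:=k/m$ and $s:=j/m$, we obtain
\begin{align*}
\supkinf w(k/m) P_m(k) \;\eqd\; \sup_{t\in m^{-1}\N}\max_{s\in G_m(t)} w(t)\,|Z(s,t)|,
\end{align*}
where $Z(s,t):=W_1(t)-\tfrac{1+t}{1+s}W_1(s)+\tfrac{t-s}{1+s}W_2(1)$ and $G_m(t):=\{j/m:0\le j\le tm-1\}$. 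Denote the target limit by $M:=\sup_{0\le t<\infty}\max_{0\le s\le t} w(t)|Z(s,t)|$.

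\emph{Discrete-to-continuous sup.} The map $(s,t)\mapsto Z(s,t)$ is a.s.\ continuous on $\{0\le s\le t\}$, so the function $F(t):=\max_{0\le s\le t}w(t)|Z(s,t)|$ is a.s.\ continuous on the support of $w$; the deterministic jumps of the indicator $I\{t_w\le t\le T_w\}$ in $w$ are harmless because $Z$ is a.s.\ continuous at those points. By the paragraph immediately following Theorem \ref{thm:mainH0}, $M$ is a.s.\ finite, a consequence of Assumption \ref{assump:weighting}(1) combined with the bounds $|W_1(t)|/t\to 0$ and $\sup_{s\ge 0}|W_1(s)|/(1+s)<\infty$ (a.s.). Hence, almost surely, for every $\varepsilon>0$ there exists either a point $t^\star\in[0,\infty)$ with $F(t^\star)>M-\varepsilon/2$, or a diverging sequence $t_n\to\infty$ with $F(t_n)\to M$. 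In either case, continuity of $F$ in a neighborhood of the chosen point, together with density of $m^{-1}\N$ in $[0,\infty)$, gives $\sup_{t\in m^{-1}\N}F(t)>M-\varepsilon$ for all sufficiently large $m$; the reverse inequality holds trivially. Thus the right-hand side of the displayed equation converges to $M$ almost surely, and combined with the rescaling step this yields the claimed weak convergence.

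The principal obstacle is the case in which $F$ attains its supremum only in the limit $t\to\infty$ (possible when $T_w=\infty$). This is exactly where the tail condition $\limsup_{t\to\infty} t\tilde w(t)<\infty$ is essential: it forces $w(t)|W_1(t)|\le C|W_1(t)|/t\to 0$ and $w(t)\tfrac{1+t}{1+s}|W_1(s)|\le C'\sup_{s\ge 0}|W_1(s)|/(1+s)$ uniformly in $t$, so the tail of $F$ is controlled by the finite random quantity $\sup_{s\ge 0}|W_1(s)|/(1+s)+|W_2(1)|$ times a bounded factor. Without this control, the near-maximizer $t^\star$ could fail to be approximable by grid points and $M$ might even fail to be finite; with it, the approximation argument of the previous step goes through routinely.
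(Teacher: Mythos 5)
Your proof is essentially correct, but it takes a genuinely different route through the key step than the paper does. Both arguments begin identically, using Brownian scaling to replace $\supkinf w(k/m)P_m(k)$ by an equidistributed functional of a fixed pair of independent Brownian motions evaluated on the grid $m^{-1}\N$. The paper then invests almost all of its effort in proving that the limit process $L(s,t)=w(t)|Z(s,t)|$ (split into the pieces $L^{(1)}$ and $L^{(2)}$) is almost surely \emph{uniformly} continuous on the unbounded domain $\{0\le s\le t\le T_w\}$ --- via the law of the iterated logarithm and, crucially, the uniform continuity of $1/\tildew$ from Assumption \ref{assump:weighting}(2) --- and then bounds $\sup_{s,t}|L(\floor{s}/m,\floor{t}/m)-L(s/m,t/m)|$ by the uniform modulus of continuity at scale $1/m$. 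You bypass uniform continuity entirely: since the grid points form a subset of the continuum, the grid supremum is trivially bounded above by $M$, and the matching lower bound needs only (a) a.s.\ finiteness of $M$ (which you correctly derive from $\limsup_t t\tildew(t)<\infty$ and $\sup_s|W_1(s)|/(1+s)<\infty$ a.s.) and (b) \emph{local} continuity of $(s,t)\mapsto w(t)|Z(s,t)|$ at a near-maximizer, plus density of the grid. This is more economical and, notably, never uses Assumption \ref{assump:weighting}(2); the one-sidedness of the comparison is exactly what lets you avoid controlling the discretization error uniformly over the unbounded domain. Two points to tighten. First, the quantity you must lower-bound is $\sup_{t\in m^{-1}\N}\max_{s\in G_m(t)}w(t)|Z(s,t)|$, in which \emph{both} coordinates are discretized, whereas your argument only approximates $t$ by grid points and keeps $s$ continuous inside $F(t)$. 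The fix is immediate --- choose a jointly near-maximizing pair $(s^\star,t^\star)$ with $s^\star<t^\star$ (possible a.s.\ since $Z(t,t)=0$) and approximate both coordinates by grid points $j/m\le (k-1)/m$ using joint continuity of $Z$ --- but it should be said. Second, the near-maximizer must be taken with $t^\star$ inside the support $[t_w,T_w]$ of $w$ and approximated by grid points from within that interval (where $w=\tildew$ is continuous); the relevant issue at the cutoffs is the jump of $w$, not the continuity of $Z$, so your parenthetical justification there is aimed at the wrong object even though the conclusion is fine.
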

\begin{proof}
First note that due to the scaling properties of the Brownian motion [see for example  page 30 of \citesuppl{Shorack2009}], it holds in distribution that
\begin{align}\label{eq:P2tildeP}
P_m(k)
&\eqd \max_{j=0}^{k-1} \Big| W_1(k/m) - \dfrac{1+k/m}{1+j/m}W_1(j/m) + \dfrac{k/m-j/m}{1+j/m}W_2(1)\Big|
:= \widetilde{P}_m(k)
\end{align}
and so within the proof we will - without loss of generality - only consider $\widetilde{P}_m(k)$.
Additionally, we define the processes
\begin{align*}
L^{(1)}(s,t) &:= \tildew(t) \Big( \dfrac{t+1}{s+1}W_1(s) - W_1(t) \Big)~,\\
L^{(2)}(s,t) &:= \tildew(t) \dfrac{t-s}{s+1}W_2(1)~.
\end{align*}
We will show that $L^{(i)}(s,t)$ for $i=1,2$ are uniformly continuous on $\R_\Delta^+ = \{ (s,t) \in \R^2 \;|\; 0\leq s \leq t \leq T_w\}$ with probability one, where $T_w \in \R_{>0} \cup \{\infty\}$ is the (right) cutoff constant from Assumption \refb{assump:weighting}.
In case $T_w < \infty$ this directly follows as both processes are already a.s. continuous, so only $T_w =\infty$ is of interest.\smallskip\\
\textbf{Case $L^{(1)}$}:
In the following let $\varepsilon>0$ be fixed but arbitrary.
Next, we fix one $\omega_0 \in \Omega$, such that  $W_1$ fulfills the law of iterated logarithm  and is continuous.
As this event has probability one, it suffices to show that $L^{(1)}$ is uniformly continuous for $W_1=W_1(\cdot,\omega_0)$.
For the ease of reading we will omit $\omega_0$ in the presentation below.
By the law of iterated logarithm, there exist $C=C(\varepsilon,\omega_0)$ sufficiently large, such that
\begin{align}\label{ineq:ChoiceC}
\sup_{t\geq C} \dfrac{|W_1(t)|}{t} < \dfrac{\varepsilon}{4B}~,
\end{align}
with $B$ chosen as
\begin{align*}
B:= \sup_{t >0}\; (t+2)\tildew(t) <\infty~.
\end{align*}
\\
Depending on $C$, we can split $\R_\Delta^+$ into the (overlapping) sets
\begin{align}\label{eq:decompDelta}
\R_\Delta^+ = \mathcal{M}_1(C) \cup \mathcal{M}_2(C) \cup \mathcal{M}_3(C)~,
\end{align}
where we use the definitions
\begin{align*}
\mathcal{M}_1(C)
&= \R_\Delta^+ \cap [0,C+1]^2 ~,\\
\mathcal{M}_2(C)
&= \R_\Delta^+ \cap [0,C+1] \times [C,\infty) ~,\\
\mathcal{M}_3(C)
&= \R_\Delta^+ \cap [C,\infty)^2 ~.
\end{align*}
Further let $d$ denote the maximum distance that is
\begin{align*}
d\big((s_1,t_1),(s_2,t_2)\big)
= \max\big\{|s_1-s_2|,|t_1-t_2| \big\}~.
\end{align*}
Note that by construction of the decomposition in \eqref{eq:decompDelta}, whenever $d((s_1,t_1),(s_2,t_2))<\delta$ for sufficiently small $\delta>0$, then there is $j \in {1,2,3}$, such that both pairs are in the same subset $\mathcal{M}_j(C)$.
Thus the uniform continuity of $L^{(1)}$ follows if we can choose $\delta>0$ sufficiently small, such that
\begin{align}\label{ineq:forjs}
\sup_{\substack{d((s_1,t_1),(s_2,t_2))<\delta\\(s_1,t_1),(s_2,t_2) \in \mathcal{M}_j(C)}}
\Big| L^{(1)}(s_1,t_1) - L^{(1)}(s_2,t_2) \Big|
< \varepsilon
\end{align}
for $j=1,2,3$.
In the following, we will treat each subset separately.\smallskip\\
\textbf{Set $\mathcal{M}_1(C)$}:
As this set is compact, the (ordinary) almost sure continuity of $L^{(1)}$ already implies that \eqref{ineq:forjs} holds for $j=1$ and $\delta>0$ sufficiently small.\smallskip\\
\textbf{Set $\mathcal{M}_2(C)$}:
We have the following bound
\begin{align}\label{ineq:M2C1}
\begin{split}
&\sup_{\substack{d((s_1,t_1),(s_2,t_2))<\delta\\(s_1,t_1),(s_2,t_2) \in \mathcal{M}_2(C)}}
\Big| L^{(1)}(s_1,t_1) - L^{(1)}(s_2,t_2) \Big|\\
&\leq \sup_{\substack{|t_1-t_2|<\delta\\0 \leq s \leq C+1,\;\; t_1,t_2> C}}
\Big| L^{(1)}(s,t_1) - L^{(1)}(s,t_2) \Big| +
\sup_{\substack{|s_1-s_2|<\delta\\0 \leq s_1,s_2 \leq C+1,\;\; t> C}}
\Big| L^{(1)}(s_1,t) - L^{(1)}(s_2,t) \Big|~.
\end{split}
\end{align}
We will treat both summands of the last display individually.
The first one can be bounded again as follows
\begin{align}\label{ineq:M2C2}
\begin{split}
&\sup_{\substack{|t_1-t_2|<\delta\\0 \leq s \leq C+1,\;\; t_1,t_2> C}}
\Big| L^{(1)}(s,t_1) - L^{(1)}(s,t_2) \Big|\\
&\leq \sup_{\substack{|t_1-t_2|<\delta\\0 \leq s \leq C+1,\;\; t_1,t_2> C}} 
\Big|\tildew(t_1)- \tildew(t_2) \Big|\Big| \dfrac{t_1+1}{s+1}W_1(s) - W_1(t_1) \Big|\\
&\qquad\qquad\qquad\qquad\qquad+ \tildew(t_2)\Big| \dfrac{t_1-t_2}{s+1}W_1(s) - W_1(t_1)+W_1(t_2) \Big|
\end{split}
\end{align}
and again we will treat both terms separately.
For the first term note the upper bound
\begin{align*}
&\sup_{\substack{|t_1-t_2|<\delta\\0 \leq s \leq C+1,\;\; t_1,t_2> C}} 
\big|\tildew(t_2)-\tildew(t_1)\big|(t_1+1)\Big| \dfrac{W_1(s)}{s+1} - \dfrac{W_1(t_1)}{t_1+1} \Big|\\
&\qquad\leq 2 u_w\sup_{\substack{|t_1-t_2|<\delta\\}} \bigg|\dfrac{1}{\tildew(t_2)} - \dfrac{1}{\tildew(t_1)}\bigg|\sup_{t\geq C}(t+1)\tildew(t)\sup_{t \geq 0}\dfrac{|W_1(t)|}{t+1}~,
\end{align*}
and since $1/\tildew$ is uniformly continuous this expression is smaller than $\varepsilon/3$ for sufficiently small $\delta$.
For the second term of the right-hand side of \eqref{ineq:M2C2}, note that we have the bound
\begin{align*}
&\sup_{\substack{|t_1-t_2|<\delta\\0 \leq s \leq C+1,\;\; t_1,t_2> C}} 
\tildew(t_2)\Big| \dfrac{t_1-t_2}{s+1}W_1(s) - W_1(t_1) + W_1(t_2) \Big|\\
\leq &\sup_{\substack{|t_1-t_2|<\delta\\0 \leq s \leq C+1,\;\; t_1,t_2> C}}
\tildew(t_2)|t_1-t_2|\dfrac{|W_1(s)|}{s+1} + \tildew(t_2)(t_1+1)\dfrac{|W_1(t_1)|}{t_1+1}+
\tildew(t_2)(t_2+1)\dfrac{|W_1(t_2)|}{t_2+1}\\
\leq &\delta u_w\sup_{s\geq 0} \dfrac{|W_1(s)|}{s+1} + 2\sup_{t>0} (t+2)\tildew(t)\cdot\sup_{t>C}\dfrac{|W_1(t)|}{t+1}
\end{align*}
and by the choice of $C$ in \eqref{ineq:ChoiceC} and for sufficiently small $\delta$ this is bounded by $\varepsilon/3$.
To complete the treatment of $\mathcal{M}_2(C)$ it only remains to examine the second term on the right-hand side of \eqref{ineq:M2C1}.
We obtain that
\begin{align*}
&\sup_{\substack{|s_1-s_2|<\delta\\0 \leq s_1,s_2 \leq C+1,\;\; t> C}}
\Big| L^{(1)}(s_1,t) - L^{(1)}(s_2,t) \Big|\\
&\leq \sup_{t > 0} (t+1)\tildew(t) \sup_{\substack{|s_1-s_2|<\delta\\0 \leq s_1,s_2 \leq C+1}}
\Big| \dfrac{W_1(s_1)}{s_1+1} - \dfrac{W_1(s_2)}{s_2+1} \Big|~,
\end{align*}
which can be bounded by $\varepsilon/3$ for sufficiently small $\delta$ since the first factor is a constant and the function $f(s)=W_1(s+1)/(s+1)$ is uniformly continuous on the compact set $[0,C+1]$.\smallskip\\
\textbf{Set $\mathcal{M}_3(C)$}:
Note that
\begin{align*}
\sup_{\substack{d((s_1,t_1),(s_2,t_2))<\delta\\(s_1,t_1),(s_2,t_2) \in \mathcal{M}_3(C)}}
\Big| L^{(1)}(s_1,t_1) - L^{(1)}(s_2,t_2) \Big|
&\leq 2\sup_{s,t>C} |L^{(1)}(s,t)|\\
&\leq 2\sup_{t > 0} (t+1)\tildew(t) \sup_{s,t>C} \Big| \dfrac{W_1(s)}{s+1} - \dfrac{W_1(t)}{t+1} \Big|\\
&\leq 4 \sup_{t > 0} (t+1)\tildew(t) \sup_{t\geq C} \dfrac{|W_1(t)|}{t}
< \varepsilon~,
\end{align*}
where we used the choice of $C$ in \eqref{ineq:ChoiceC} for the last estimate. \\
This completes the third case and so the almost sure uniform continuity of $L^{(1)}$ on the set $\R_\Delta^+$ is established.
\smallskip\\

\noindent \textbf{Case $L^{(2)}$}:
Again let $\varepsilon>0$ and suppose that $d\big((s_1,t_1),(s_2,t_2)\big)<\delta$.
It holds that
\begin{align}\label{ineq:boundL1}
\big|L^{(2)}(s_1,t_1) - L^{(2)}(s_2,t_2)\big|
\leq \big|L^{(2)}(s_1,t_1) - L^{(2)}(s_2,t_1)\big| + \big|L^{(2)}(s_2,t_1) - L^{(2)}(s_2,t_2) \big|
\end{align}
and note for the first summand of the last display that
\begin{align*}
\big|L^{(2)}(s_1,t_1) - L^{(2)}(s_2,t_1)\big|
&= |W_2(1)|\tildew(t_1)\bigg| \dfrac{t_1-s_1}{s_1+1} - \dfrac{t_1-s_2}{s_2+1} \bigg|\\
&= |W_2(1)|(t_1+1)\tildew(t_1)\bigg| \dfrac{s_2-s_1}{(s_1+1)(s_2+1)}\bigg|\\
&\leq \big|W_2(1)\big|(t_1+1)\tildew(t_1)\big|s_2-s_1\big|
\end{align*}
and by Assumption \refb{assump:weighting} the last term is smaller than $\varepsilon/2$ uniformly for all $t_1>0$ if $\delta >0$ is chosen sufficiently small.
It remains to examine the second summand of the right-hand side of \eqref{ineq:boundL1}.
It holds that
\begin{align*}
|L^{(2)}(s_2,t_1) - L^{(2)}(s_2,t_2) \big|
&\leq \big|\tildew(t_1) - \tildew(t_2) \big| \dfrac{|t_1-s_2|}{s_2+1}|W_2(1)| +
\tildew(t_2)|W_2(1)|\dfrac{|t_2-t_1|}{s_2+1}\\
&\leq \bigg|\dfrac{1}{\tildew(t_1)} - \dfrac{1}{\tildew(t_2)}\bigg|\tildew(t_1) \tildew(t_2) \dfrac{t_1+\delta}{s_2+1}|W_2(1)| +
u_w|W_2(1)||t_2-t_1|\\
&\leq u_w\bigg|\dfrac{1}{\tildew(t_1)} - \dfrac{1}{\tildew(t_2)}\bigg|\tildew(t_1)(t_1+\delta)|W_2(1)| +
u_w|W_2(1)||t_2-t_1|~,
\end{align*}
where we used that $s_2\leq t_2 \leq t_1 + \delta$ whenever $d\big((s_1,t_1),(s_2,t_2)) < \delta$.
By Assumption \refb{assump:weighting} the last display is smaller than $\varepsilon/2$ whenever $\delta>0$ is sufficiently small and so the almost sure continuity of $L^{(2)}$ is shown.\medskip\\
Finally, we can combine our observations to finish the proof.
Note that by the results above, also the process $L(s,t) :=| L^{(1)}(s,t) + L^{(2)}(s,t)|$ is uniformly continuous with probability one.
Next, recall the cutoff parameters from Assumption \refb{assump:weighting} and observe the identity
\begin{align*}
\sup_{t_w m \leq t < T_w m} \sup_{0 \leq s \leq t} L\big(\floor{s}/m, \floor{t}/m\big)
= \supkinf w(k/m)\widetilde{P}_m(k)
= \sup_{k=t_w \cdot m}^{T_w m} \tilde{w}(k/m)\widetilde{P}_m(k)~.
\end{align*}
Furthermore, note that
\begin{align*}
\sup_{1 \leq t < \infty} \sup_{0 \leq s \leq t} L\big(\floor{s}/m, \floor{t}/m\big) 
= \sup_{0 \leq t < \infty} \sup_{0 \leq s \leq t} L\big(\floor{s}/m, \floor{t}/m\big) + o(1)
\end{align*}
almost surely as $m \to \infty$.
Now we can finish the proof of Lemma \ref{lem:ObtLimit} using the almost sure uniform continuity of $L$, which implies that for arbitrary $\varepsilon>0$ and almost every $\omega \in \Omega$ we can choose sufficiently large $m=m(\varepsilon,\omega)$ such that
\begin{align*}
&\bigg| \supkinf w(k/m)\tilde{P}_m(k) - \sup_{0 \leq t < \infty} \max_{0 \leq s \leq t} 
w(t) \Big| W_1(t)- \dfrac{1+t}{1+s}W_1(s) + \dfrac{t-s}{1+s} W_2(1) \Big| \bigg|\\
&= \bigg| \sup_{k=t_w\cdot m}^{T_w m} \tildew(k/m)\tilde{P}_m(k) - \sup_{t_w \leq t < T_w} \max_{0 \leq s \leq t} 
 \tildew(t) \Big| W_1(t)- \dfrac{1+t}{1+s}W_1(s) + \dfrac{t-s}{1+s} W_2(1) \Big| \bigg|\\
&= \bigg| \sup_{t_w m \leq t < T_w m} \sup_{0 \leq s \leq t} L\big(\floor{s}/m, \floor{t}/m\big) -
\sup_{t_w \leq t < T_w} \sup_{0 \leq s \leq t} L\big(s, t\big) \bigg|\\
&= \bigg| \sup_{t_w m \leq t < T_w m} \sup_{0 \leq s \leq t} L\big(\floor{s}/m, \floor{t}/m\big) -
\sup_{t_w m \leq t < T_w m} \sup_{0 \leq s \leq t} L\big(s/m, t/m\big) \bigg|\\
&\leq \sup_{t_w m \leq t < T_w m} \sup_{0 \leq s \leq t} \bigg| L\big(\floor{s}/m, \floor{t}/m\big) - L\big(s/m, t/m\big)\bigg|\\
&\leq \sup_{0 \leq t < \infty} \sup_{0 \leq s \leq t} \bigg| L\big(\floor{s}/m, \floor{t}/m\big) - L\big(s/m, t/m\big)\bigg|\\
&\leq \sup_{\substack{d((s_1,t_1),(s_2,t_2))< 1/m\\(s_1,t_1),(s_2,t_2)\in \R_\Delta^+}} \bigg| L\big(s_1, t_1\big) - L\big(s_2, t_2\big)\bigg|
< \varepsilon~.
\end{align*}
\end{proof}

\noindent Combining Lemma \ref{lem:RemoveRemainders}, \ref{lem:approx} and \ref{lem:ObtLimit} we have already proven that
\begin{align}\label{conv:E}
\supkinf w(k/m)E_m(k)
\convd
\max_{0 \leq t < \infty} \max_{0 \leq s \leq t} w(t)
\Big| W_1(t)- \dfrac{1+t}{1+s} W_1(s) + \dfrac{t-s}{1+s}W_2(1) \Big|~,
\end{align}
and it only remains to investigate the impact of the covariance estimator.
Therefore the following Lemma finishes the proof of Theorem \refb{thm:mainH0}.

\begin{lemma}[Plug in of covariance estimator]\label{lem:plugcovar}
We have that
\begin{align*}
\supkinf w(k/m)E_m(k) - \supkinf w(k/m)\hatE_m(k) = \op(1)~.
\end{align*}
\end{lemma}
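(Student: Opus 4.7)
The plan is to reduce the claim to the operator-norm consistency of $\hatSigma^{-1/2}$ together with the tightness of $\sup_{k=1}^\infty w(k/m) E_m(k)$, both of which are already available at this point of the proof.

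First, by the reverse triangle inequality for suprema, $|\sup_k a_k - \sup_k b_k| \le \sup_k |a_k - b_k|$, it suffices to show
\begin{align*}
\supkinf w(k/m)\,\big|E_m(k) - \hatE_m(k)\big| = \op(1).
\end{align*}

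Second, for any $v \in \R^p$ and any symmetric positive definite matrix $A$ one has $\|v\|_A = |A^{1/2} v|$, so the reverse triangle inequality applied to the Euclidean norm gives, for every $v \in \R^p$,
\begin{align*}
\big|\|v\|_{\hatSigma^{-1}} - \|v\|_{\Sigma^{-1}}\big|
\le \|\hatSigma^{-1/2} - \Sigma^{-1/2}\|_{\mathrm{op}} \cdot |v|
\le \|\hatSigma^{-1/2} - \Sigma^{-1/2}\|_{\mathrm{op}} \cdot \|\Sigma\|_{\mathrm{op}}^{1/2}\cdot \|v\|_{\Sigma^{-1}}~,
\end{align*}
where the last step uses the elementary norm-equivalence $|v|^2 \le \|\Sigma\|_{\mathrm{op}}\, v^\top \Sigma^{-1} v$, which is valid since $\Sigma$ is non-singular. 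Applying this to $v = \hattheta_1^{m+j} - \hattheta_{m+j+1}^{m+k}$ and passing the factor $m^{-1/2}(k-j)$ together with the $\max_{j=0}^{k-1}$ defining $\hatE_m$ and $E_m$ through the inequality yields the pointwise bound
\begin{align*}
w(k/m)\,\big|E_m(k) - \hatE_m(k)\big|
\le \|\hatSigma^{-1/2} - \Sigma^{-1/2}\|_{\mathrm{op}}\cdot \|\Sigma\|_{\mathrm{op}}^{1/2}\cdot w(k/m)\,E_m(k).
\end{align*}

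Third, I take the supremum over $k \in \N$ on both sides. By Lemmas \ref{lem:RemoveRemainders}--\ref{lem:ObtLimit} combined the weak convergence \eqref{conv:E} was already established, which in particular implies the tightness $\supkinf w(k/m)\,E_m(k) = \Op(1)$. Moreover, since $\Sigma$ is non-singular (Assumption \ref{assump:approx}) and $\hatSigma$ is, by the hypothesis of Theorem \ref{thm:mainH0}, both consistent for $\Sigma$ and non-singular, the continuous mapping theorem, applied to the matrix square root and matrix inverse (both continuous on a neighbourhood of $\Sigma$ inside the cone of positive definite matrices), delivers $\|\hatSigma^{-1/2} - \Sigma^{-1/2}\|_{\mathrm{op}} = \op(1)$. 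The right-hand side therefore is of order $\op(1)\cdot \Op(1) = \op(1)$, which completes the proof. The only real technical point is the continuous-mapping step for $\hatSigma^{-1/2}$, which is standard once the non-singularity of the limit $\Sigma$ is in place.
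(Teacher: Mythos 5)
Your proof is correct and follows essentially the same route as the paper's: reduce the difference of suprema to an operator-norm bound on a matrix discrepancy times $\supkinf w(k/m)E_m(k)$, then conclude via the continuous mapping theorem and the tightness already supplied by \eqref{conv:E}. The only cosmetic difference is that you compare the two norms through $\|\hatSigma^{-1/2}-\Sigma^{-1/2}\|_{\mathrm{op}}$, whereas the paper uses the elementary inequality $|a-b|\le|a^2-b^2|^{1/2}$ together with Cauchy--Schwarz to work with $\|\hatSigma^{-1}-\Sigma^{-1}\|_{\mathrm{op}}^{1/2}$ directly; both steps are equally valid given that $\hatSigma$ is positive definite.
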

\begin{proof}\renewcommand{\qedsymbol}{}
Observe the bound
\begin{align}\label{ineq:covarplug}
\bigg| &\supkinf w(k/m)E_m(k) - \supkinf w(k/m)\hatE_m(k) \bigg|\notag\\
&\leq \supkinf \dfrac{w(k/m)}{\sqrt{m}}\max_{j=0}^{k-1} (k-j)\bigg| (\hattheta_{1}^{m+j} - \hattheta_{m+j+1}^{m+k})^\top(\hatSigma^{-1}-\Sigma^{-1})(\hattheta_{1}^{m+j} - \hattheta_{m+j+1}^{m+k})\bigg|^{1/2}~.
\end{align}
Next note that for a symmetric matrix $A$ and an arbitrary vector $v$ the Cauchy-Schwarz inequality implies
\begin{align*}
|v^\top A v|^2
\leq |Av||v|
\leq \Vert A\Vert_{op}|v|^2~,
\end{align*}
and we can bound \eqref{ineq:covarplug} by
\begin{align}\label{bound:covar}
\Big\Vert \hatSigma^{-1}-\Sigma^{-1} \Big\Vert_{op}^{1/2} \supkinf \dfrac{w(k/m)}{\sqrt{m}}\max_{j=0}^{k-1}(k-j)\Big| \hattheta_{1}^{m+j} - \hattheta_{m+j+1}^{m+k}\Big|~.
\end{align}
Since $\hatSigma$ is a consistent estimator of $\Sigma$, an application of the continuous mapping theorem yields
\begin{align}\label{opnorm:covar}
\Big\Vert \hatSigma^{-1}-\Sigma^{-1} \Big\Vert_{op}
= \op(1)~.
\end{align}
Next, the definition of the operator norm yields
\begin{align*}
\supkinf &\dfrac{w(k/m)}{\sqrt{m}}
\max_{j=0}^{k-1}(k-j)\big| \hattheta_{1}^{m+j} - \hattheta_{m+j+1}^{m+k} \big|\\
&\leq \Vert \Sigma^{1/2}\Vert_{op} \supkinf \dfrac{w(k/m)}{\sqrt{m}}
\max_{j=0}^{k-1}(k-j)\big\Vert \hattheta_{1}^{m+j} - \hattheta_{m+j+1}^{m+k} \big\Vert_{\Sigma^{-1}}\\
&= \Vert \Sigma^{1/2}\Vert_{op} \supkinf w(k/m)E_m(k)
= \Op(1)~.
\end{align*}
Now a combination of \eqref{conv:E} and \eqref{opnorm:covar} implies that the expression in \eqref{bound:covar} is of order $\op(1)$, which completes the proof of Lemma \ref{lem:plugcovar} and thus also the proof of Theorem \refb{thm:mainH0}.
\end{proof}
\noindent Combining Lemmas \ref{lem:RemoveRemainders}, \ref{lem:approx}, \ref{lem:ObtLimit} and \ref{lem:plugcovar} we have now established that
\begin{align*}
\supkinf w(k/m)\hatE_m(k)
&\convd
\sup_{0 \leq t < \infty} \max_{0 \leq s \leq t} w(t) \bigg| W_1(t)- \dfrac{1+t}{1+s}W_1(s) + \dfrac{t-s}{1+s} W_2(1)  \bigg|
\end{align*}
and it remains to show that the distribution on the right-hand side of the last display is identical to the distribution on the right-hand side of \eqrefb{eq:ThmMainH01}.
\begin{lemma}[Simplify limit distribution]\label{lem:simplylimit}
It holds that
\begin{align*}
&\sup_{0 \leq t < \infty} \max_{0 \leq s \leq t} w(t)
\bigg| W_1(t)- \dfrac{1+t}{1+s}W_1(s) + \dfrac{t-s}{1+s} W_2(1)  \bigg|\\
&\hspace{2.5cm}\eqd \sup_{0 \leq t < \infty} \max_{0 \leq s \leq t} (t+1)w(t)
\Big| W\Big(\dfrac{t}{t+1}\Big) -  W\Big(\dfrac{s}{s+1}\Big) \Big|~,
\end{align*}
where $W$ is a standard $p$-dimensional Brownian motion.
\end{lemma}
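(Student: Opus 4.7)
\textbf{Proof plan for Lemma \ref{lem:simplylimit}.}
The plan is to realize both sides as suprema of a single Gaussian field after a deterministic time change, and then identify the process driving the LHS with a standard $p$-dimensional Brownian motion. The natural change of variable is $u = s/(s+1)$, $v = t/(t+1)$, which maps $\{0 \leq s \leq t < \infty\}$ bijectively onto $\{0 \leq u \leq v < 1\}$ with inverses $s = u/(1-u)$, $t = v/(1-v)$ and $1+s = 1/(1-u)$, $1+t = 1/(1-v)$. A direct computation gives $t-s = (v-u)/((1-u)(1-v))$, $(1+t)w(t) = w(v/(1-v))/(1-v)$, and after factoring out the common $1/(1-v)$ one checks that the increment inside the LHS equals
\begin{align*}
W_1(t) - \tfrac{1+t}{1+s}W_1(s) + \tfrac{t-s}{1+s}W_2(1)
= \tfrac{1}{1-v}\Bigl[\bigl((1-v)W_1(\tfrac{v}{1-v}) + v\,W_2(1)\bigr) - \bigl((1-u)W_1(\tfrac{u}{1-u}) + u\,W_2(1)\bigr)\Bigr].
\end{align*}

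The key step is therefore to define
\[
\tilde W(u) := (1-u)\,W_1\bigl(\tfrac{u}{1-u}\bigr) + u\,W_2(1),\qquad u \in [0,1),
\]
extended continuously to $[0,1]$ by $\tilde W(1):=W_2(1)$ (continuity at $1$ follows from $(1-u)W_1(u/(1-u))\to 0$ a.s., itself a consequence of the law of the iterated logarithm applied to $W_1$), and then to show that $\tilde W$ is a standard $p$-dimensional Brownian motion. This reduces to three points, none of which I expect to be hard: (i) $\tilde W$ is a centered Gaussian process because $W_1,W_2$ are independent and Gaussian; (ii) the paths of $\tilde W$ are continuous on $[0,1]$; and (iii) for $0\leq u\leq v \leq 1$, using the independence of $W_1$ and $W_2$ component-wise,
\[
\mathrm{Cov}(\tilde W(u),\tilde W(v)) = (1-u)(1-v)\min\!\bigl(\tfrac{u}{1-u},\tfrac{v}{1-v}\bigr)I_p + uv\, I_p = u(1-v)I_p + uv\, I_p = u I_p,
\]
so the covariance of $\tilde W$ is exactly that of a standard $p$-dimensional Brownian motion on $[0,1]$.

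Once $\tilde W$ is identified as a $p$-dimensional Brownian motion, the rest is bookkeeping: the displayed identity rewrites the LHS as
\[
\sup_{0\leq t<\infty}\max_{0\leq s\leq t} w(t)\bigl|\cdots\bigr|
= \sup_{0\leq v<1}\max_{0\leq u\leq v}\frac{w(v/(1-v))}{1-v}\bigl|\tilde W(v)-\tilde W(u)\bigr|,
\]
and undoing the same change of variable $v=t/(t+1)$, $u=s/(s+1)$ on the RHS of the claim turns it into
\[
\sup_{0\leq v<1}\max_{0\leq u\leq v}\frac{w(v/(1-v))}{1-v}\bigl|W(v)-W(u)\bigr|.
\]
Since $\tilde W \stackrel{\mathcal{D}}{=} W$ as processes on $[0,1]$, the two suprema have the same distribution, which is exactly the claim.

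The main obstacle I anticipate is not algebraic but conceptual: spotting the right affine combination of $W_1$ and $W_2(1)$ that trivializes the cross-term $(t-s)/(1+s)\,W_2(1)$. Once the decomposition $(v-u)/(1-v) = v/(1-v) - u/(1-v)$ is seen as the discrete difference of $u/(1-u)$-style weights applied to $W_2(1)$, the identity $\tilde W(v)-\tilde W(u)$ emerges immediately and the rest of the argument is mechanical. Everything else — verifying the Gaussian covariance, continuity at $1$, and bijectivity of the time change — reduces to one-line calculations.
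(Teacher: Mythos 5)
Your proposal is correct and is essentially the paper's own argument in different coordinates: the process $\tilde W(u)=(1-u)W_1(u/(1-u))+uW_2(1)$ that you identify as a standard Brownian motion via its covariance is precisely the identity $\{W(t)-tZ\}_{t\geq 0}\eqd\{(1+t)W(t/(t+1))\}_{t\geq 0}$ that the paper invokes (citing Horv\'ath et al.\ and Fremdt and likewise verifying it through the covariance function), after the substitution $u=t/(t+1)$ and the harmless sign flip $W_2(1)\mapsto -Z$. The only cosmetic difference is that you carry out the whole computation in the time-changed variables $(u,v)$ and prove the key identity self-containedly, whereas the paper stays in $(s,t)$, adds and subtracts $stZ$ to reach the same affine combination, and cites the identity.
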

\begin{proof}[Proof of Lemma \ref{lem:simplylimit} and last step in the proof of Theorem \refb{thm:mainH0}]
$ $\newline
\renewcommand{\qedsymbol}{}
In the following let $Z$ denote a vector of $p$ independent standard Gaussian random variable, that is independent of $W_1$.
Observe that
\begin{align}\label{eq:lemmaBrownian}
&\;\;\;w(t) \bigg| W_1(t)- \dfrac{1+t}{1+s}W_1(s) + \dfrac{t-s}{1+s} W_2(1)  \bigg| \notag\\
&= \dfrac{w(t)}{(s+1)} \Big| (s+1)W_1(t) - (t+1)W_1(s) + (t-s)W_2(1) \Big|\notag\\
&\eqd \dfrac{w(t)}{(s+1)} \Big| (s+1)W(t) - (t+1)W(s) -(t-s)Z \Big|\notag\\
&= \dfrac{w(t)}{(s+1)} \Big| (s+1)W(t) - (t+1)W(s) -(t-s)Z +stZ -stZ \Big|\notag\\
&= \dfrac{w(t)}{(s+1)} \Big| (s+1)\big\{W(t) -tZ\big\} - (t+1)\big\{W(s)-sZ\big\} \Big|~.
\end{align}
Following \citesuppl{Horvath2004}, \citesuppl{Fremdt2015}, computing the covariance function implies the following identity (in distribution)
\begin{align*}
\Big\{W(t) - tZ \Big\}_{t \geq 0} 
\eqd \Big\{ (1+t)W\Big(\dfrac{t}{t+1}\Big)\Big\}_{t \geq 0}~.
\end{align*}
Applying this to \eqref{eq:lemmaBrownian} yields
\begin{align*}
&\dfrac{w(t)}{(s+1)} \Big| (t+1)\big\{W(s) -sZ\big\} - (s+1)\big\{W(t)-tZ\big\} \Big|\\
\eqd &\dfrac{w(t)}{(s+1)} \Big| (t+1)(s+1)W\Big(\dfrac{s}{s+1}\Big) - (t+1)(s+1)W\Big(\dfrac{t}{t+1}\Big) \Big|\\
= & w(t) \Big| (t+1)W\Big(\dfrac{s}{s+1}\Big) - (t+1)W\Big(\dfrac{t}{t+1}\Big) \Big|~.
\end{align*}
This completes the proof of Lemma \ref{lem:simplylimit} and also of Theorem \refb{thm:mainH0}.\vspace{-0.5cm}
\end{proof}
\end{proof}

\begin{proof}[\textbf{Proof of Corollary \refb{cor:simplify}:}]
We proceed according to the proof of Theorem 3.1 of \cite{Fremdt2015}.
Using the definition $w_\gamma(t)=\bigg[(1+t)\max\Big\{\Big(\dfrac{t}{1+t}\Big)^\gamma,\, \varepsilon \Big\}\bigg]^{-1}$, we obtain that
\begin{align*}
&\sup_{0 \leq t < \infty} \max_{0 \leq s \leq t} w_\gamma(t) \Big| (t+1)W\Big(\dfrac{s}{s+1}\Big) - (t+1)W\Big(\dfrac{t}{t+1}\Big) \Big|\\
&\qquad = \sup_{0 \leq t < \infty} \max_{0 \leq s \leq t} \dfrac{1}{\max\big\{ \Big(\dfrac{t}{1+t}\Big)^\gamma,\, \varepsilon \Big\}}\Big| W\Big(\dfrac{s}{s+1}\Big) - W\Big(\dfrac{t}{t+1}\Big) \Big|\\
&\qquad = \sup_{0 \leq t < 1} \max_{0 \leq s \leq t} \dfrac{1}{\max\{ t^\gamma, \varepsilon\}}\Big|W(s) - W(t)\Big|~,
\end{align*}
where we used that the mapping $x \mapsto x/(1+x)$ is bijective and increasing on the domain $[0,\infty)$ with co-domain $[0,1)$~.
\end{proof}

\begin{proof}[\textbf{Proof of Theorem \refb{thm:mainH1}:}]
For the ease of reading assume in the proof that $c_a m,\, c_ak^*_m \in \N$.
We follow the idea of \citesuppl{Stoehr2019} and distinct the cases $k_m^*/m = O(1)$ and $k_m^*/m \to \infty$.
In the first case, observe the lower bounds
\begin{align*}
\supkinf w(k/m)\hatE_m(k)
&\hspace{0.2cm}= \supkinf \dfrac{w(k/m)}{\sqrt{m}} \max_{j=0}^{k-1} (k - j) \Big\Vert \hattheta_{1}^{m+j} - \hattheta_{m+j+1}^{m+k} \Big\Vert_{\hatSigma^{-1}}\\
&\hspace{-1cm}\overset{k= c_am + k_m^*}{\geq}\dfrac{w\Big( \tfrac{k_m^*}{m} + c_a\Big)}{\sqrt{m}} 
\max_{j=0}^{c_am+ k_m^*-1}(c_am + k^*_m -j) \Big\Vert \hattheta_{1}^{m+j} - \hattheta_{m+j+1}^{(1+c_a)m+k_m^*} \Big\Vert_{\hatSigma^{-1}}~.
\end{align*}
Note that by Assumption \ref{assump:weighting} and \ref{assump:alt} it holds that $w\Big( \tfrac{k_m^*}{m} + c_a\Big) = \tildew\Big( \tfrac{k_m^*}{m} + c_a\Big)$ since $\tfrac{k_m^*}{m} + c_a \in (t_w,T_w)$.
Thereby, the last display equals
\begin{align*}
&\dfrac{\tildew\Big( \tfrac{k_m^*}{m} + c_a\Big)}{\sqrt{m}} \max_{j=0}^{c_am+ k_m^*-1}(c_am + k^*_m -j) \Big\Vert \hattheta_{1}^{m+j} - \hattheta_{m+j+1}^{(1+c_a)m+k_m^*} \Big\Vert_{\hatSigma^{-1}}\\
&\hspace{1.2cm}\overset{j=k^*_m-1}{\geq} \dfrac{(c_am+1)\tildew\Big( \tfrac{k_m^*}{m} + c_a\Big)}{\sqrt{m}}
\Big\Vert \hattheta_{1}^{m+k^*_m-1} - \hattheta_{m+k^*_m}^{(1+c_a)m+k_m^*} \Big\Vert_{\hatSigma^{-1}}~.
\end{align*}
Using the reverse triangle inequality, the last display is bounded from below by
\begin{align}\label{altlowerbound}
\begin{split}
&c_a \tildew\Big( \tfrac{k_m^*}{m} + c_a\Big)\\
&\hspace{1cm}\cdot
\sqrt{m} \Bigg(  \Big\Vert \theta^{(1)}_m - \theta^{(2)}_m  \Big\Vert_{\hatSigma^{-1}}
- \Big\Vert \hattheta_{1}^{m+k^*_m-1} - \theta^{(1)}_m  \Big\Vert_{\hatSigma^{-1}}
- \Big\Vert \hattheta_{m+k^*_m}^{(1+c_a)m+k_m^*} - \theta^{(2)}_m  \Big\Vert_{\hatSigma^{-1}} \Bigg)~.
\end{split}
\end{align}
To examine the first factor of the last display, note that we have $k^*_m/m \leq C$ for all $m \in \N$ and a sufficiently large constant $C$.
Using Assumption \refb{assump:weighting}, we now obtain
\begin{align*}
c_a \tildew\Big( \tfrac{k_m^*}{m} + c_a\Big)
\geq c_a\min\limits_{t \in [c_a,C+c_a]}\tildew(t)
> 0~.
\end{align*}
Now it remains to treat the second factor in \eqref{altlowerbound}.
Note that by Assumption \refb{assump:alt}, we obtain that
\begin{align*}
\sqrt{m} \Big| \theta^{(1)}_m - \theta^{(2)}_m  \Big| \convm \infty~.
\end{align*}
Using also the linearization in equation \eqrefb{eq:meta-remainder} and \eqrefb{alt:afterChange1}, we conclude that
\begin{align*}
\sqrt{m}\Big( \hattheta_{1}^{m+k^*_m-1} - \theta^{(1)}_m \Big)
= \dfrac{\sqrt{m}}{m+k^*_m-1}\sum_{t=1}^{m+k^*_m-1} \IF_t \;+\;\sqrt{m}R_{1,m+k^*_m-1}
= \Op(1)
\end{align*}
and
\begin{align*}
\sqrt{m} \Big( \hattheta_{m+k^*_m}^{(1+c_a)m+k^*_m} - \theta^{(2)}_m \Big)
= \dfrac{\sqrt{m}}{c_am+1}\sum_{t=m+k^*_m}^{(1+c_a)m+k^*_m} \IF_t \;+\; \sqrt{m} R_{m+k^*,(1+c_a)m+k^*_m}
= \Op(1)~.
\end{align*}
Putting all together and using also that $\hatSigma$ is (weakly) convergent with non-singular limit the treatment of the first case is finished since \eqref{altlowerbound} diverges to $\infty$.

It remains to treat the case $k_m^*/m \to \infty$, for which we can employ very similar arguments.
Setting $k = k_m^*(1+c_a) $ and $j= k_m^*-1$ in the definition of $w(k/m)\hatE_m(k)$ gives the lower bound
\begin{align}\label{altcase2}
\supkinf w(k/m)\hatE_m(k)
\geq \dfrac{c_ak_m^* w\Big( \tfrac{k_m^*}{m}(1 + c_a)\Big)}{\sqrt{m}}
\Big\Vert \hattheta_{1}^{m+k^*_m-1} - \hattheta_{m+k^*_m}^{m+(1+c_a)k_m^*} \Big\Vert_{\hatSigma^{-1}}~.
\end{align}
As $T_w=\infty$ by assumption, we have $w\Big( \tfrac{k_m^*}{m}(1 + c_a)\Big) = \tildew\Big( \tfrac{k_m^*}{m}(1 + c_a)\Big)$ for $m$ sufficiently large.
Now we obtain that \eqref{altcase2} has the lower bound
\begin{align}\label{altlowerbound2}
\begin{split}
&c_a \dfrac{k_m^*}{m} \tildew\Big( \tfrac{k_m^*}{m}(1 + c_a)\Big)\\
&\hspace{0.6cm}\cdot
\sqrt{m} \Bigg(  \Big\Vert \theta^{(1)}_m - \theta^{(2)}_m  \Big\Vert_{\hatSigma^{-1}}
- \Big\Vert \hattheta_{1}^{m+k^*_m-1} - \theta^{(1)}_m  \Big\Vert_{\hatSigma^{-1}}
- \Big\Vert \hattheta_{m+k^*_m}^{m+(1+c_a)k_m^*} - \theta^{(2)}_m  \Big\Vert_{\hatSigma^{-1}} \Bigg)~.
\end{split}
\end{align}
By assumption \eqrefb{lim:thresholdExtra} we obtain
$$
\liminf_{m \to \infty} \dfrac{k_m^*}{m} \tildew\Big( \tfrac{k_m^*}{m}(1 + c_a)\Big) > 0~.
$$
Using \eqrefb{alt:afterChange2} and repeating the corresponding steps from the first case, it follows that
$$
\sqrt{m} \Bigg(  \Big\Vert \theta^{(1)}_m - \theta^{(2)}_m  \Big\Vert_{\hatSigma^{-1}}
- \Big\Vert \hattheta_{1}^{m+k^*_m-1} - \theta^{(1)}_m  \Big\Vert_{\hatSigma^{-1}}
- \Big\Vert \hattheta_{m+k^*_m}^{m+(1+c_a)k_m^*} - \theta^{(2)}_m  \Big\Vert_{\hatSigma^{-1}} \Bigg)
\convp \infty~.
$$
Combining the last two statements with the lower bound provided in \eqref{altlowerbound2} the treatment of the second case and thereby the proof of Theorem \refb{thm:mainH1} is finished.
\end{proof}

\newpage
\section{Additional simulation results for Section \refb{sec4}}\label{sec:closedEnd}
In this section we provide some additional simulation results complementing the discussion on the power of the different monitoring procedures in Section \refb{sec4}.
The simulation settings are identical to those used in Section \refb{sec4} and as the results below are very similar to the results displayed in Figures \refb{fig:OE1} and \refb{fig:OE2} we omit a further discussion here.
\begin{figure}[H]
\includegraphics[width=15cm,height=8.85cm]{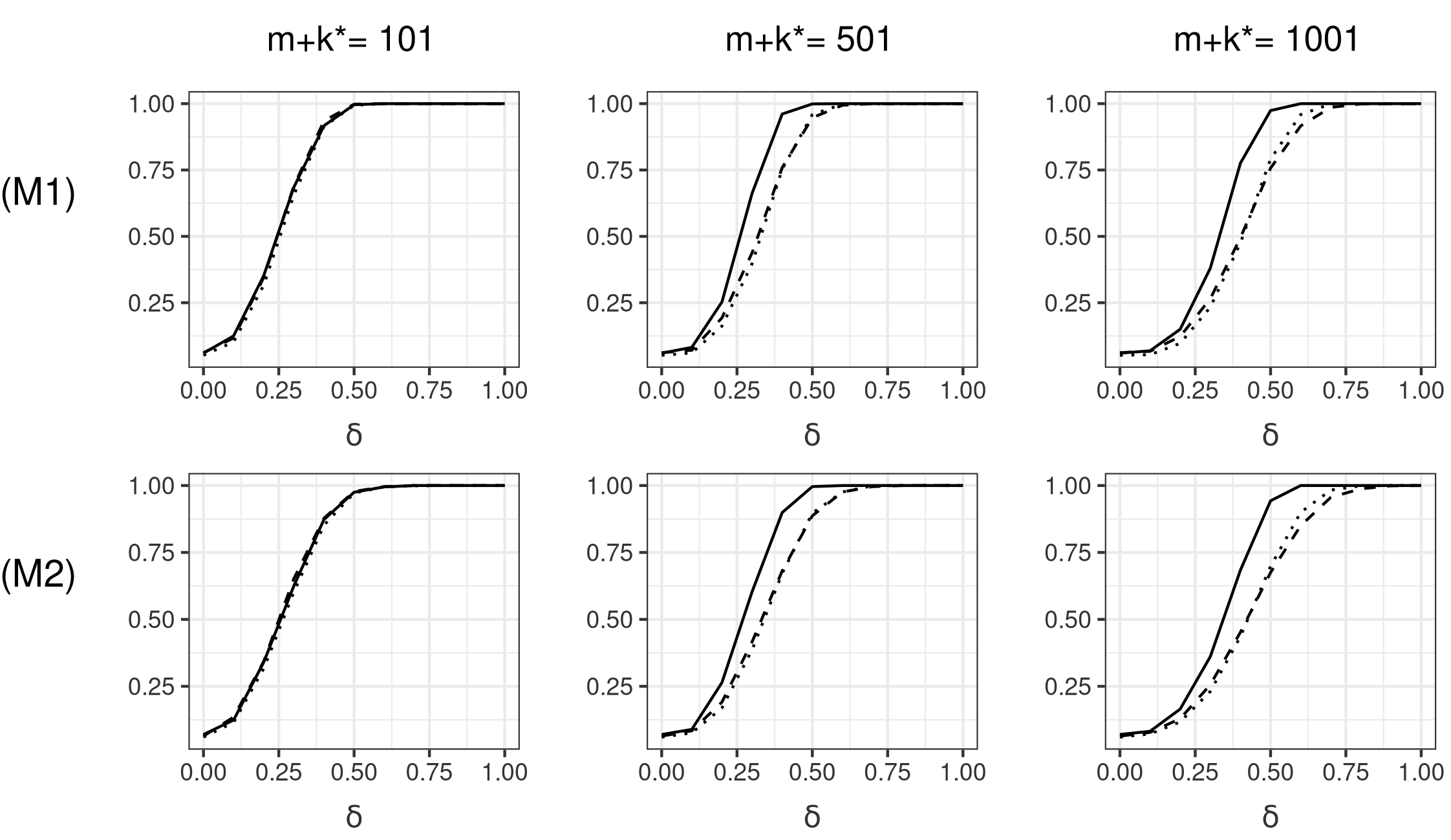} 
\caption{\it Power of the monitoring procedures for a change in the mean based on the statistics  $\hatE$ (solid line), $\hatQ$ (dashed line) and $\hatP$ (dotted line) with $\gamma=0.45$ and $m=100$.
\label{fig:OE3}}
\end{figure}
\begin{figure}[H]
\includegraphics[width=15cm,height=8.85cm]{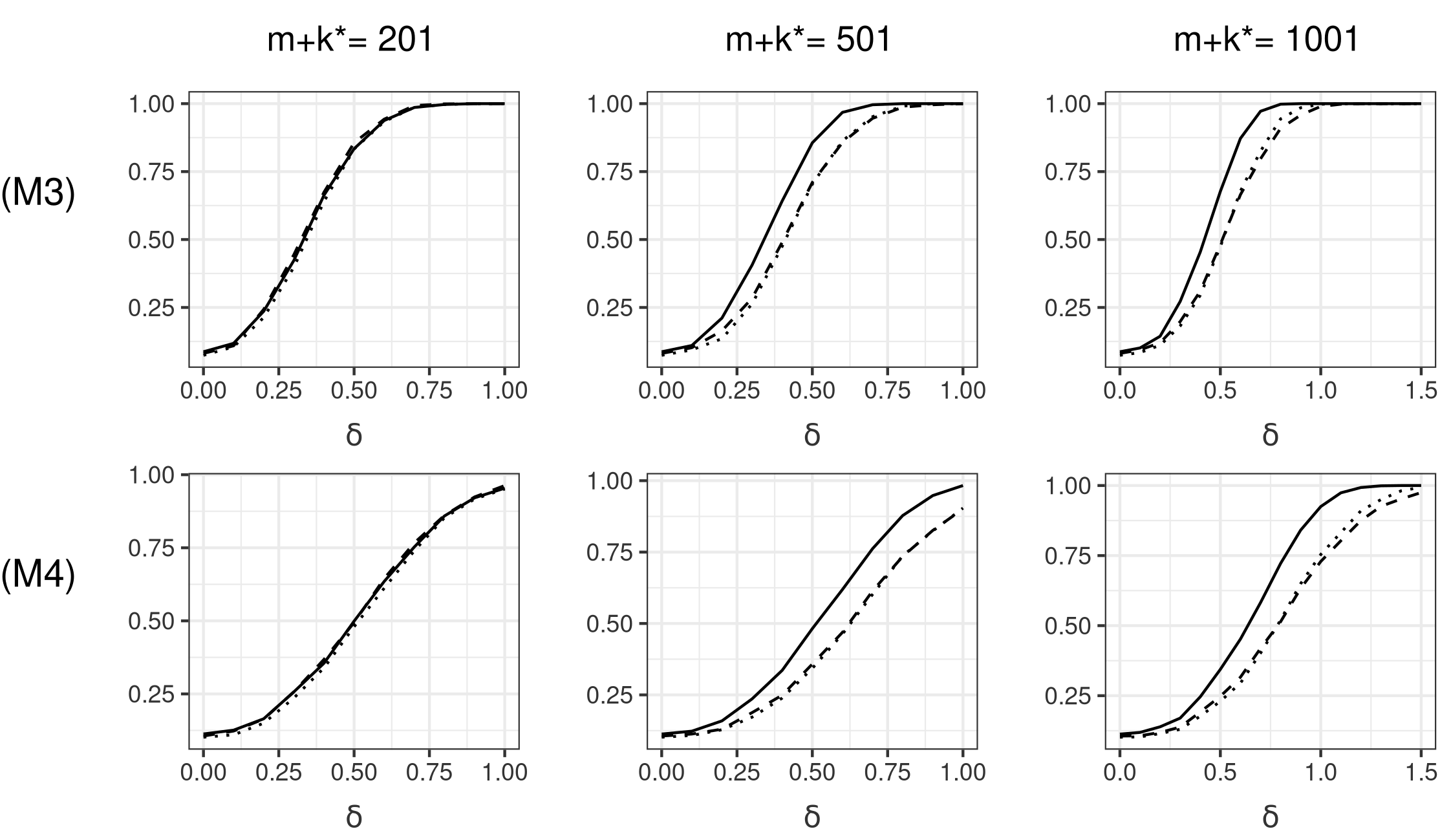} 
\caption{\it Power of the monitoring procedures for a change in the mean based on the statistics $\hatE$ (solid line), $\hatQ$ (dashed line) and $\hatP$ (dotted line) with $\gamma=0.45$ and $m=200$.
\label{fig:OE4}}
\end{figure}

\section{Closed-end scenarios}\label{sec:ClosedEnd}
It is worthwhile to mention that the theory developed in Section \refb{sec2} also covers the case of closed-end scenarios [sometimes also called finite time horizon].
In this section, we will very briefly discuss this situation and present a small batch of simulation results, which also indicate the superiority of the statistic $\hatE$ for closed-end scenarios.
Note that the null hypothesis in this setup is given by
\begin{align}\label{ClosedHypo0}
H_0&:\; \theta_1 = \dots = \theta_m = \theta_{m+1} =\theta_{m+2} = \ldots =\theta_{(T+1)m}~,
\end{align}
which is tested against the alternative that the parameters changes (once) at some time $m+1 \leq m+k^{\star} \leq (T+1)m$, that is
\begin{align}\label{ClosedHypo1}
 H_1&:\; \exists k^{\star} \in \N:\;\;
\theta_1 = \dots =\theta_{m+k^{\star}-1} \neq \theta_{m+k^{\star}} = \theta_{m+k^{\star}+1} = \ldots =\theta_{(T+1)m}~.
\end{align}
Here the factor $T \in \N$ controls the length of the monitoring period compared to the size of the initial data set.
Under the assumptions stated in Section 2, we can prove a corresponding statement of Theorem \refb{thm:mainH0} and Corollary \refb{cor:simplify}.

\begin{theorem}\label{thm:ClosedH0}
Assume that the null hypothesis \eqref{ClosedHypo0} and Assumptions \refb{assump:approx} - \refb{assump:remainder} hold.
If further $\hatSigma$ is a consistent and non-singular estimator of the long-run variance matrix $\Sigma$ it holds that
\begin{align}\label{eq:ThmClosedH0}
\sup_{k=1}^{Tm} w_\gamma(k/m) \hatE_m(k) 
&\convd
\sup_{0 \leq t \leq T} \max_{0 \leq s \leq t} (t+1)w(t)
\Big| W\Big(\dfrac{t}{t+1}\Big) -  W\Big(\dfrac{s}{s+1}\Big) \Big|
\end{align}
where $W$ is a $p$-dimensional Brownian motion with independent components.
Using $w=w_\gamma$ for the class of weight functions defined in \eqrefb{eq:threshold}, we further obtain that
\begin{align}\label{eq:closedAltRep}
\begin{split}
&\sup_{0 \leq t \leq T} \max_{0 \leq s \leq t} (t+1)w_\gamma(t)
\Big| W\Big(\dfrac{t}{t+1}\Big) -  W\Big(\dfrac{s}{s+1}\Big) \Big|\\
&\hspace{2cm}\eqd \max_{0 < t \leq T/(T+1)} \max_{0 \leq s \leq t}
\dfrac{1}{\max\{ t^\gamma, \varepsilon\}} \Big| W(t) - W(s) \Big|=:L_{1,\gamma}(T)~.
\end{split}
\end{align}

\end{theorem}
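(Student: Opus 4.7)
The plan is to derive Theorem \ref{thm:ClosedH0} as a direct corollary of the open-end Theorem \ref{thm:mainH0} by exploiting the cut-off parameter $T_w$ that is already built into Assumption \ref{assump:weighting}.

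First, I would introduce the truncated weight function
\begin{align*}
\overline{w}(t) := w_\gamma(t) \cdot I\{0 \leq t \leq T\}~,
\end{align*}
which corresponds to the choice $t_w = 0$ and $T_w = T \in \R_+$ in the decomposition \eqref{def:cutoffsweighting}. Since $T_w < \infty$, the two asymptotic conditions (1) and (2) listed in Assumption \ref{assump:weighting} are vacuous, and it only remains to verify that $\tildew_\gamma(t) = (t+1)^{-1}\max\{(t/(t+1))^\gamma,\varepsilon\}^{-1}$ is positive and continuous on $\R_{\geq 0}$. Both properties are immediate thanks to the $\varepsilon$-cutoff in the denominator. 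Hence $\overline{w}$ fulfills all requirements of Assumption \ref{assump:weighting}.

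Second, I would apply Theorem \ref{thm:mainH0} to the weight function $\overline{w}$. The key observation is that $\overline{w}(k/m)\hatE_m(k) = 0$ for $k > Tm$, while $\overline{w}(k/m) = w_\gamma(k/m)$ for $k \leq Tm$; hence the left-hand side of the resulting weak convergence collapses to $\sup_{k=1}^{Tm} w_\gamma(k/m) \hatE_m(k)$. On the right-hand side, the factor $\overline{w}(t)$ vanishes for $t > T$, so the outer supremum reduces to one over $t \in [0, T]$. This yields exactly \eqref{eq:ThmClosedH0}.

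Third, to derive the representation \eqref{eq:closedAltRep}, I would mimic the calculation in Corollary \ref{cor:simplify}. Using the cancellation $(t+1) w_\gamma(t) = \max\{(t/(t+1))^\gamma, \varepsilon\}^{-1}$ together with the substitution $u = t/(t+1)$, $v = s/(s+1)$, which is a strictly increasing bijection from $\{(s,t):\, 0 \leq s \leq t \leq T\}$ onto $\{(v,u):\, 0 \leq v \leq u \leq T/(T+1)\}$, the limit distribution becomes
\begin{align*}
\max_{0 < u \leq T/(T+1)} \max_{0 \leq v \leq u} \frac{1}{\max\{u^\gamma, \varepsilon\}} \Big| W(u) - W(v) \Big| = L_{1,\gamma}(T)~.
\end{align*}

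No serious obstacle is anticipated, since the technical heart of the argument -- the uniform strong approximation of the partial-sum process by Brownian motion, the asymptotic negligibility of the linearization remainder, and the replacement of the true long-run variance by its consistent estimator -- has already been carried out in the proof of Theorem \ref{thm:mainH0}. The only minor point requiring care is the handling of the boundary value $k = Tm$ when $Tm \notin \N$, which is easily resolved by approximating $T$ from below using the continuity of $\tildew_\gamma$ at $T$.
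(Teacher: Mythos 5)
Your proposal is correct and coincides with the paper's own argument: the paper likewise obtains Theorem \ref{thm:ClosedH0} by taking $T$ as the cutoff $T_w$ in Assumption \ref{assump:weighting} and invoking Theorem \ref{thm:mainH0}, and derives \eqref{eq:closedAltRep} by a straightforward adaptation of Corollary \ref{cor:simplify}. The extra details you supply (verifying the truncated weight function satisfies Assumption \ref{assump:weighting}, the collapse of the supremum for $k>Tm$, and the bijection $t\mapsto t/(t+1)$) are exactly the steps the paper leaves implicit.
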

\noindent
The proof of Theorem \refb{thm:ClosedH0} follows from Theorem \refb{thm:mainH0} by using the factor $T$ as the cutoff $T_w$ of the weight function in \eqrefb{def:cutoffsweighting}.
The representation provided in \eqref{eq:closedAltRep} follows from a straightforward adaption of Corollary \refb{cor:simplify}.
The corresponding results for the tests based on statistics $\hatQ$ and $\hatP$ defined in \eqrefb{eq:otherStatistics} read as follows
\begin{align}\label{conv:limitQ2}
\max_{k=1}^{Tm} w_\gamma(k/m)\hat{Q}_m(k) \convd
\max_{0 < t < T/(T+1)} \dfrac{| W(t) |}{\max\{ t^\gamma, \varepsilon\}} =: L_{2,\gamma}(T)
\end{align}
and
\begin{align}\label{conv:limitP2}
\max_{k=1}^{Tm} w_\gamma(k/m)\hat{P}_m(k)  \convd \max_{0 < t < T/(T+1)} \max_{0 \leq s \leq t} \dfrac{1}{\max\{ t^\gamma, \varepsilon\}} \Big| W(t) - \dfrac{1-t}{1-s}W(s) \Big| =: L_{3,\gamma}(T) ~.
\end{align}
Likewise to Remark \refb{rem:exactcdf1} we can obtain an exact formula for the distribution of $L_{1,0}(T)$ in the case $p=1$ from page 146 of \citesuppl{Borodin1996}, this is
\begin{align}\label{eq:BorodinFormulaT}
F_{L_1(T),\gamma=0}(x)
= 1 + 8 \sum_{k=1}^\infty (-1)^k \cdot k  \cdot \Big(1- \Phi\big(kx/\sqrt{q(T)}\big)\Big)~,
\end{align}
where $\Phi$ denotes the c.d.f. of a standard Gaussian random variable and $q(T)$ denotes the quotient $T/(T+1)$.

To complete the discussion on closed-end scenarios we will display a small batch of simulation results for the detection of changes in the mean as described in Section \refb{sec:mean}.
For the sake of brevity, only the choice $T=4$ is examined here [unpublished simulation results show similar outcomes for other choices of $T$].
The remaining simulation settings are the same as used for the simulation study presented in Section \refb{sec:simMean} and in Table \ref{table:criticalp1CE} we display the necessary critical values defining the rejection regions for the different procedures. 

The approximation of the nominal level under the null hypothesis is displayed in Tables \ref{tab:meanCE1} and \ref{tab:meanCE2}
and in Figures \ref{fig:CE1} and \ref{fig:CE2} the power of the different procedures with respect to change amount and change position for $\gamma=0$ is illustrated.
The results are very similar to the open-end scenario discussed in Section \refb{sec4} and confirm the findings of that Section.

\begin{table}[H]
\begin{tabular}{c|ccc|ccc|ccc}
& \multicolumn{3}{c|}{$L_{1,\gamma}(4)$} & \multicolumn{3}{c|}{$L_{2,\gamma}(4)$} & \multicolumn{3}{c}{$L_{3,\gamma}(4)$} \\
\hline
 $\gamma$ \textbackslash $\alpha$ & 0.01 & 0.05 & 0.1 & 0.01 & 0.05 & 0.1 & 0.01 & 0.05 & 0.1 \\
\hline
\hline
0   & 2.7042 & 2.2339 & 2.0046 & 2.5145 & 1.9826 & 1.7380 & 2.5572 & 2.0435 & 1.8019\\
\hline
0.25& 2.9558 & 2.4345 & 2.2220 & 2.7602 & 2.2223 & 1.9799 & 2.8210 & 2.2986 & 2.0750\\
\hline
0.45& 3.3850 & 2.9371 & 2.6994 & 3.2238 & 2.7398 & 2.4952 & 3.3156 & 2.8626 & 2.6274\\
\end{tabular}
\caption{\it (1-$\alpha$)-quantiles of the distributions $L_{1,\gamma}(4)$, $L_{2,\gamma}(4)$ and $L_{3,\gamma}(4)$ for different choices of $\gamma$.
The cutoff constant was set to $\varepsilon=0$ and the dimension is $p=1$.
The quantiles for $L_{1,0}(4)$ were computed with respect to formula \eqref{eq:BorodinFormulaT}.
\label{table:criticalp1CE}}
\end{table}
\begin{table}[H]
\centering
\begin{tabular}{ccccccc}
 & \multicolumn{3}{c}{(M1)} & \multicolumn{3}{c}{(M2)}\\
\hline
\hline
&&&&&&\\[-1em]
$\gamma$ & $\hatE$ & $\hatQ$ & $\hatP$ &$\hatE$ & $\hatQ$ & $\hatP$ \\ 
\hline
\hline
$0$    & 5.0\% & 5.3\% & 5.3\% & 8.0\% & 7.3\% & 7.4\%\\
\hline
$0.25$ & 5.4\% & 6.0\% & 5.8\% & 8.6\% & 7.5\% & 8.1\%\\
\hline
$0.45$ & 4.9\% & 5.4\% & 4.5\% & 6.1\% & 6.4\% & 5.9\%\\
\hline
\end{tabular}
\caption{Type I error for the closed-end procedures for a change in the mean based on the statistics $\hatE$, $\hatQ$ and $\hatP$ at 5\% nominal size with a training data set of size $m=200$ and a monitoring window of $T=4$.
}
\label{tab:meanCE1}
\end{table}
\begin{table}[H]
\centering
\begin{tabular}{ccccccc}
 & \multicolumn{3}{c}{(M3)} & \multicolumn{3}{c}{(M4)}\\
\hline
\hline
&&&&&&\\[-1em]
$\gamma$ & $\hatE$ & $\hatQ$ & $\hatP$ &$\hatE$ & $\hatQ$ & $\hatP$ \\ 
\hline
\hline
$0$    & 7.6\% & 7.8\% & 8.1\% & 9.9\% & 9.8\% & 10.1\% \\
\hline
$0.25$ & 8.4\% & 8.2\% & 8.7\% & 11.4\% & 10.4\% & 10.8\% \\
\hline
$0.45$ & 6.6\% & 6.5\% & 6.8\% & 8.9\% & 8.3\% & 8.5\% \\
\hline
\end{tabular}
\caption{Type I error for the closed-end procedures for a change in the mean based on the statistics $\hatE$, $\hatQ$ and $\hatP$ at 5\% nominal size with a training data set of size $m=400$ and a monitoring window of $T=4$.
}
\label{tab:meanCE2}
\end{table}

\begin{figure}[H]
\includegraphics[width=15cm,height=8.85cm]{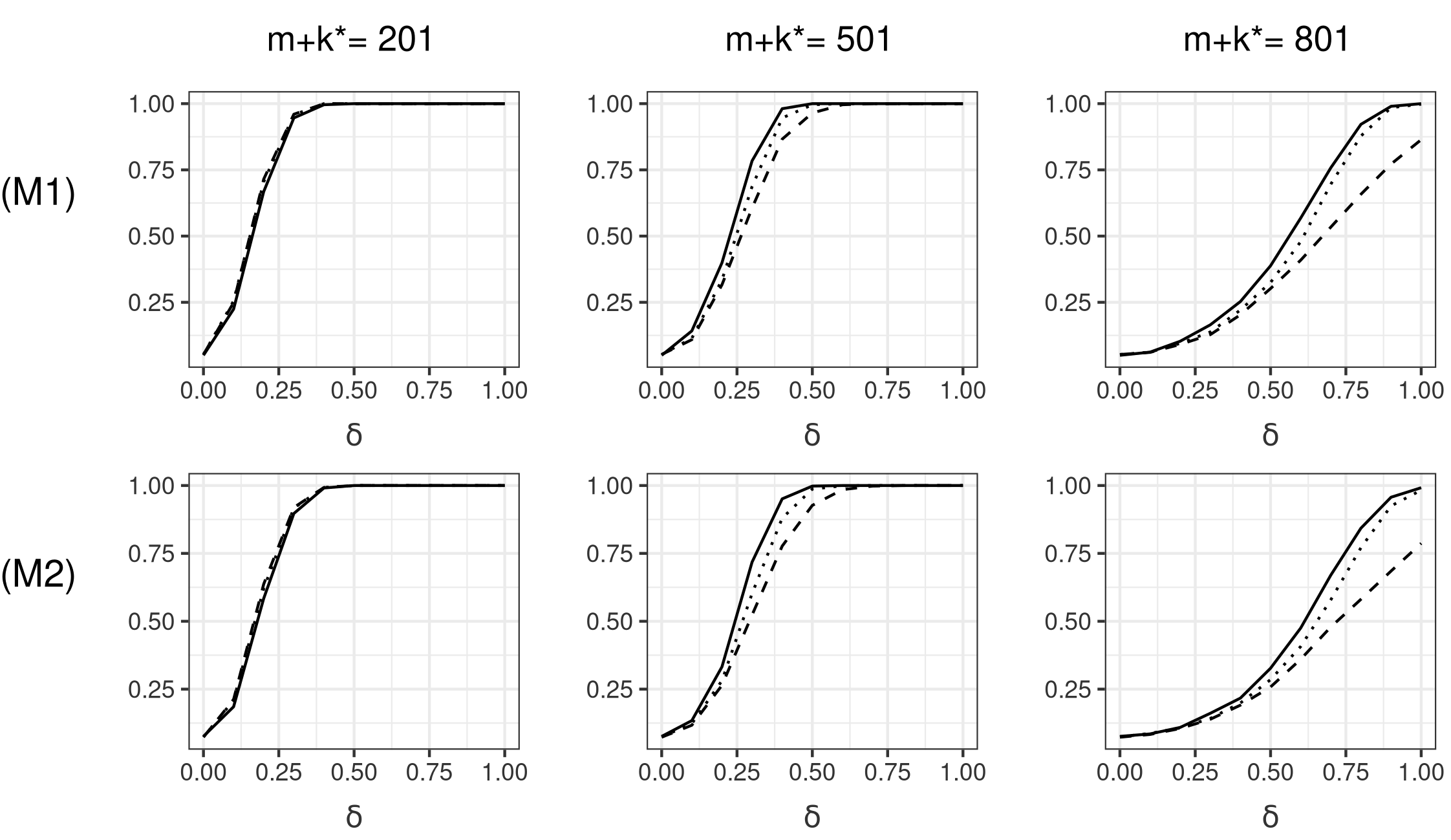} 
\vspace{-0.4cm}
\caption{\it Power of the (closed-end) monitoring procedures for a change in the mean based on the statistics $\hatE$ (solid line), $\hatQ$ (dashed line) and $\hatP$ (dotted line) with $\gamma=0$, $m=200$ and $T=4$.
\label{fig:CE1}}
\end{figure}
\begin{figure}[H]
\includegraphics[width=15cm,height=8.85cm]{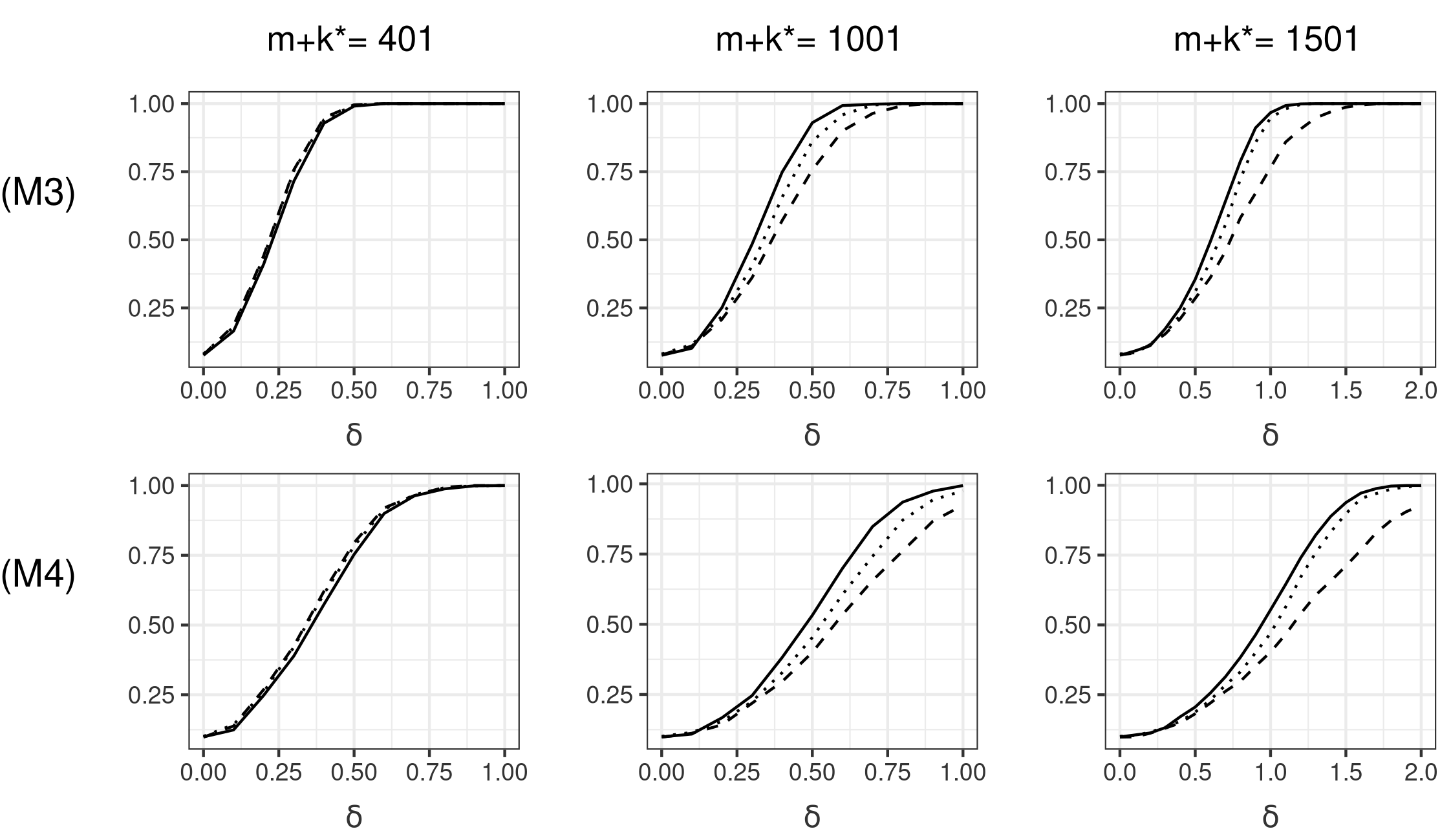} 
\vspace{-0.4cm}
\caption{\it Power of the (closed-end) monitoring procedures for a change in the mean based on the statistics $\hatE$ (solid line), $\hatQ$ (dashed line) and $\hatP$ (dotted line) with $\gamma=0$, $m=400$ and $T=4$.
\label{fig:CE2}}
\end{figure}

\newpage
\bibliographystylesuppl{apalike}
\bibliographysuppl{literature}

\end{document}